\theoremstyle{plain}
\numberwithin{equation}{section}
\newtheorem{proposition}[equation]{Proposition}
\newtheorem{theorem}{Theorem}
\newtheorem{lemma}[equation]{Lemma}
\newtheorem{corollary}[equation]{Corollary}
\theoremstyle{definition}
\newtheorem{remark}[equation]{Remark}
\newtheorem{example}[equation]{Example}
\newtheorem*{theorem 1}{Theorem 1}
\newtheorem*{theorem 2}{Theorem 2}
\newtheorem*{theorem 3(1)}{Theorem 3(1)}
\newtheorem*{theorem 3(2)}{Theorem 3(2)}
\newtheorem*{theorem 3(3)}{Theorem 3(3)}
\newtheorem*{theorem 4(1')}{Theorem 4(1')}
\newtheorem*{theorem 4(2)}{Theorem 4(2)}
\newtheorem*{theorem 4(3)}{Theorem 4(3)}
\newtheorem*{theorem 5}{Theorem 5}
\def    \R  {{\Bbb R}}
\def    \Z  {{\Bbb Z}}
\def    \CP {{\Bbb {CP}}}
\def    \C  {{\Bbb C}}
\def    \N  {{\Bbb N}}
\def    \index  {{\operatorname{index}}}
\def    \Tilde  {\widetilde}
\def    \ut     {\Tilde{u}}
\def    \Gt     {\Tilde{G}}
\begin{document}
\title[Hamiltonian circle action in dimension 10]{Hamiltonian $S^1$-manifolds which are like a coadjoint orbit of $G_2$}

\author{Hui Li} 
\address{School of mathematical Sciences\\
        Soochow University\\
        Suzhou, 215006, China.}
        \email{hui.li@suda.edu.cn}

\thanks{2020 classification. 53D05, 53D20, 55N25, 57R20}
\keywords{Symplectic manifold, Hamiltonian circle action, equivariant cohomology, Chern classes.}

\begin{abstract}
Consider a compact symplectic manifold of dimension $2n$ with a Hamiltionan circle action. Then there are at least $n+1$ fixed points.  Motivated by recent works on the case that the fixed point set consists of precisely $n+1$ isolated points, this paper studies a Hamiltonian $S^1$ action on a $10$-dimensional compact symplectic manifold with exactly 6 isolated fixed points. We study the relations of the following data: the first Chern class of the manifold, 
the largest weight of the action, all the weights of the action,
the total Chern class of the manifold, and the integral cohomology ring of the manifold. 
We show how certain data can determine the others and show the similarities  of these data with those of a coadjoint orbit of the exceptional Lie group $G_2$, equipped with a Hamiltonian action of a subcircle of the maximal torus.
\end{abstract}

 \maketitle

\section{introduction}
 
Let the circle $S^1$ act on a compact  $2n$-dimensional symplectic manifold $(M, \omega)$ with moment map. The moment map is a perfect Morse-Bott function whose critical set is precisely the fixed point set $M^{S^1}$ of the $S^1$-action.
Since the even Betti numbers of $M$, $b_{2i}(M)\geq 1$ for
all $0\leq 2i\leq 2n$, $M^{S^1}$ contains
at least $n+1$ points. When $M^{S^1}$ consists of exactly $n+1$ isolated points, we say that the action has {\bf minimal} isolated fixed points.

For a compact symplectic manifold $(M, \omega)$ of dimension $2n$ with a symplectic $S^1$-action, if the fixed points are isolated, a neighborhood of each fixed point $P$ is $S^1$-equivariantly diffeomorphic to a neighborhood of the tangent space $T_P M$ with an $S^1$-linear action. The space $T_P M$ splits into $n$ copies of $\C$, on each of which $S^1$ acts by multiplication by $\lambda^{w_i}$, where $\lambda\in S^1$ and $w_i\in\Z$ for $i=1, \cdots, n$. The integers $w_i$'s are well defined, and are called the {\it  weights} of the $S^1$-action at $P$.  In this paper,  when we say  {\bf the largest weight} or {\bf the largest weight on $M$} and there is no other specification, we mean the largest of the weights of the $S^1$-action at all the fixed points on $M$. If $W^+$ and $W^-$ are respectively the set of all the positive weights and all the negative weights at all the fixed points on $M$, then it is known that  $W^- = - W^+$  (see e.g. \cite{{H}, {L'}}).

The examples we know of compact symplectic manifolds admitting  Hamiltonian circle actions with minimal isolated fixed points are the complex projective spaces $\CP^n$ with $n\geq 0$, the Grassmannian of oriented $2$-planes in $\R^{n+2}$, denoted $\Gt_2(\R^{n+2})$ with $n\geq 3$ odd, $V_5$ and $V_{22}$ in dimension $6$, and a 10-dimensional coadjoint orbit of the rank 2 exceptional Lie group $G_2$. When the dimension of the manifolds gets higher, the weights at the fixed points are more difficult to handle, this makes the classification of the weights and the invariants of the manifolds more difficult. For example, in dimension 4, known results tell us that $\CP^2$ is the only such manifolds up to  $S^1$-equivariant symplectomorphism  (\cite{{Ka}, {J}}). In dimension $6$, in terms of integral cohomology rings and total Chern classes, there are exactly
$4$ kinds, those of $\CP^3$, $\Gt_2(\R^5)$, $V_5$ and $V_{22}$ (\cite{{T}, {Mc}}). In dimension 8, 
with the help of computer programs, the work \cite{GS} narrows down to the integral cohomology ring and total Chern class of $\CP^4$ and other possible  kinds, and the work \cite{JT} further shows that there is only the integral cohomology ring and total Chern class of $\CP^4$. For an arbitrary  $2n$-dimensional  manifold $(M, \omega)$, by our Corollary~\ref{1<n+1}, we can see that
$$c_1(M)=kx \,\,\,\mbox{with $k\in\N$ and $1\leq k\leq n+1$},$$
where $x\in H^2(M; \Z)$ is a generator. In \cite{L}, for the cases when 
$c_1(M)=(n+1)x$ and $c_1(M)=nx$, the author shows that the integral cohomology ring and total Chern class of the manifold respectively correspond uniquely to those of $\CP^n$, and of $\Gt_2(\R^{n+2})$ with $n\geq 3$ odd. In all the cases mentioned here, the weights of the $S^1$-action at the fixed points are determined in the mentioned works and are standard as in the known examples.

 In this paper, we consider 10-dimensional compact Hamiltonian $S^1$-manifolds with 6 isolated fixed points. As mentioned, $\CP^5$ and $\Gt_2(\R^7)$ are such kinds of manifolds, and they are studied in \cite{L}. Here we are not studying all the 10-dimensional such manifolds,
but consider only those such manifolds which have some common feature with a 10-dimensional coadjoint orbit of $G_2$ --- the example we know. We will use $\bf {\mathcal O}$ to denote this coadjoint orbit throughout the paper. We note that the other 10-dimensional coadjoint orbit of $G_2$ is diffeomorphic to $\Gt_2(\R^7)$ (see \cite{KT})\footnote{I thank R. Sjamaar for sending me this reference from where I know this fact.}. The maximal torus $T^2$ of $G_2$ acts on $\mathcal O$ in a Hamiltonian way 
with $6$ isolated fixed points. From this Hamiltonian $T^2$-manifold, we can create a compact Hamiltonian $S^1$-manifold of dimension 10 with 6 isolated fixed points, 
 other than $\CP^5$ and $\Gt_2(\R^7)$. In this paper, not as $\mathcal O$,  for the compact Hamiltonian $S^1$-manifolds we consider, we do not assume that the manifolds have more symmetry than an $S^1$-action. Manifolds with  $S^1$-actions appear more rigid than manifolds with higher dimensional torus actions, and they appear more difficult to study in some aspects.
In this work, we will study the interplay between the topological and geometric data --- the first Chern class, the total Chern class, and the integral cohomology ring of the manifold, and data related to the circle action --- the largest weight and all the weights of the $S^1$-action at all the fixed points. The study 
reveals close relationships between these data.

To be more concrete, let  $(M, \omega)$ be a compact Hamiltonian $S^1$-manifold of dimension 10 with moment map $\phi\colon M\to\R$, assume the fixed point set $M^{S^1}$ consists of 5 points, i.e., $M^{S^1}=\{P_0,  P_1, \cdots, P_5\}$, and $[\omega]$ is a primitive integral class. Then by Lemma~\ref{k|}, 
$$\phi(P_i)-\phi(P_j)\in\Z \,\,\,\mbox{for all $i, j$},$$ 
and by Lemma~\ref{order}, 
$$\phi(P_0) < \phi(P_1) < \cdots < \phi(P_5).$$ 
The assumption on the number of fixed points immediately implies two things,
one is that $H^2(M; \Z)=\Z$ (Lemma~\ref{order}), so we can scale the symplectic class $[\omega]$ to be primitive integral, and the other is that $M$ is connected.  We will state our main results below. 

Examples~\ref{o} and \ref{c1O} on the coadjoint orbit $\mathcal O$ has 
$c_1(\mathcal O)=3[\omega]$ and
has largest weight 5. Examples~\ref{CP5} and \ref{grass} on $\CP^5$ and $\Gt_2(\R^7)$ 
respectively have $c_1(\CP^5)=6[\omega]$ and  $c_1(\Gt_2(\R^7))=5[\omega]$, and both have largest weight at least 5, see also Examples~\ref{c1CP5} and \ref{c1grass}.
First, in Theorem~\ref{thm1}, we give the range of $c_1(M)$ and of the largest weight on $M$.

\begin{theorem}\label{thm1}
Let  $(M, \omega)$ be a compact $10$-dimensional Hamiltonian $S^1$-manifold. Assume $M^{S^1}=\{P_0,  P_1, \cdots, P_5\}$ and $[\omega]$ is a primitive integral class. Then 
$c_1(M)=k[\omega]$ with $k\in\N$ and $1\leq k\leq 6$, and the largest weight is at least 5.
\end{theorem}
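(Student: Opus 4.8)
The plan is to dispatch the two assertions separately; the bound on $c_1(M)$ is essentially a citation, while the weight estimate is where the work lies. By Lemma~\ref{order}, $M$ is connected and $H^2(M;\Z)\cong\Z$, so after rescaling $[\omega]$ to a primitive integral generator we have $c_1(M)=k[\omega]$ for a unique $k\in\Z$, and the inequalities $1\le k\le 6$ are exactly Corollary~\ref{1<n+1} applied with $n=5$. One can also see $k\ge 1$ directly: since the action is Hamiltonian with isolated fixed points it is equivariantly formal, so $c_1(M)$ lifts to an equivariant class, and comparing it with the equivariant symplectic class normalized so that $\phi(P_0)=0$ yields the scalar relation $\sum_j w_j(P_i)=\sum_j w_j(P_0)-k\,\phi(P_i)$ at every fixed point $P_i$; evaluating at the maximum $P_5$, where every weight is negative while $\phi(P_5)>0$, forces $k>0$.

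For the weight estimate I would first record the Morse skeleton: $\phi$ is a perfect Morse--Bott function, $\phi(P_0)<\cdots<\phi(P_5)$, all odd Betti numbers vanish, $b_{2i}(M)=1$ for $0\le i\le 5$, and (from the wall-crossing structure of the reduced spaces at regular levels) the index of $P_i$ is $2i$; thus $P_i$ carries $i$ negative and $5-i$ positive weights, $P_0$ is the minimum, $P_5$ the maximum, and $W^-=-W^+$, so the largest weight is $\max W^+$. I would then argue by contradiction, assuming every weight lies in $\{\pm1,\pm2,\pm3,\pm4\}$, and play off four constraints: (i) the relation $\sum_j w_j(P_i)=\sum_j w_j(P_0)-k\,\phi(P_i)$ above, which, since $k\ge 1$ and $\phi$ is strictly increasing, makes $s_i:=\sum_j w_j(P_i)$ strictly decreasing in $i$; (ii) the Atiyah--Bott--Berline--Vergne identities $\sum_i s_i^t/\prod_j w_j(P_i)=0$ for $t=0,1,2,3,4$, valid for degree reasons, which in combination with (i) and the non-degeneracy of the associated Vandermonde matrix pin $\prod_j w_j(P_i)$ down, up to a single global constant, as $\pm$ a product of differences of moment-map values --- at $P_0$ this reads $b_1b_2b_3b_4b_5=\tfrac1d\prod_{j\ge1}\bigl(\phi(P_j)-\phi(P_0)\bigr)$ with $d=\int_M[\omega]^5$, forcing the left side to grow once the $\phi(P_j)$ spread out; (iii) the standard fact that the two weights at the poles of an $S^1$-invariant $2$-sphere are negatives of one another, applied to the gradient $2$-spheres joining $P_1$ to $P_0$ and $P_4$ to $P_5$, and to the invariant $2$-spheres linking the intermediate $P_i$; and (iv) the integrality $\phi(P_i)-\phi(P_j)\in\Z$ of Lemma~\ref{k|}, so in particular $\phi(P_j)-\phi(P_0)\ge j$. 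With the largest absolute weight capped at $4$, each $P_i$ has only finitely many admissible weight multisets, so what remains is a finite elimination.

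The main obstacle is precisely carrying out that finite elimination: the product identity in (ii) by itself does not push any single weight up to $5$ (a product of five $2$'s already defeats many of the relevant bounds), so one must run it against (i), (iii), (iv) and against the reverse-action symmetry, which matches the bottom configuration at $P_0$ with the top configuration at $P_5$; finding the efficient bookkeeping --- most naturally the kind of structural lemma developed elsewhere in the paper and in \cite{L} --- is where the effort concentrates. I would also check whether an ambient lemma already delivers ``largest weight $\ge n$'' for every $2n$-dimensional Hamiltonian $S^1$-manifold with $n+1$ isolated fixed points; if so, the estimate is immediate with $n=5$ and the case analysis above becomes unnecessary.
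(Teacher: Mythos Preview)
Your treatment of the first assertion is fine and matches the paper: Lemma~\ref{order} plus Corollary~\ref{1<n+1} gives $c_1(M)=k[\omega]$ with $1\le k\le 6$.

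For the weight estimate, however, you have outlined a strategy but not a proof. You explicitly defer the ``finite elimination'' and speculate about an ambient lemma ``largest weight $\ge n$'' that does not exist in the paper (or in the cited literature). The tools you propose --- the ABBV/Vandermonde product identity, the linear relation on weight sums, gradient-sphere pairings --- are correct facts, but as you yourself note, the product identity $\prod_j w_j(P_0)=\tfrac{1}{d}\prod_{j\ge 1}(\phi(P_j)-\phi(P_0))$ does not by itself force any weight above $4$, since $d$ is not bounded above a priori; and you do not indicate how the remaining constraints actually close the gap. So there is a genuine missing idea.

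The paper's argument is different and concrete. It never invokes ABBV localization. Instead it proves multiplicity bounds on the weights at $P_0$ (Lemma~\ref{mult}): the weight $1$ has multiplicity at most $1$, and any weight $>1$ has multiplicity at most $2$, because a higher multiplicity would force the corresponding isotropy submanifold $M^{\Z_a}$ through $P_0$ to carry too many index-$2$ fixed points (via Lemma~\ref{alcx}) for a total of six fixed points. These multiplicity bounds alone already rule out largest weight $1$ or $2$ (not enough weights to fill five slots at $P_0$). For largest weight $3$ or $4$, the paper lists the few admissible multisets at $P_0$ and $P_5$, then uses a structural lemma (Lemma~\ref{0235}) about $4$-dimensional components of $M^{\Z_b}$ to produce a weight between $P_0$ and $P_5$, and kills each surviving pair with the congruence Lemma~\ref{mod} (weights at $P_0$ and $P_5$ must agree mod that common weight) or with Lemma~\ref{gammaij}. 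The engine is the geometry of the isotropy submanifolds, not localization.
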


Next, we consider those manifolds which have some common feature with $\mathcal O$. As mentioned, $c_1(\mathcal O)=3[\omega]$.
When the largest weight is the smallest, i.e.,  $5$, Theorem~\ref{thm2} discloses the equivalence of the topological condition $c_1(M)=3[\omega]$ to the 2 conditions on the largest weight of the circle action. When the largest weight is larger than 5, then this equivalence as in Theorem~\ref{thm2} fails, see Remark~\ref{remw} for an example. This special feature of the case when the largest weight is 5 makes us to single it out and study it independantly.

\begin{theorem}\label{thm2}
Let  $(M, \omega)$ be a compact $10$-dimensional Hamiltonian $S^1$-manifold with moment map $\phi\colon M\to\R$. Assume $M^{S^1}=\{P_0,  P_1, \cdots, P_5\}$ and $[\omega]$ is a primitive integral class. 
 Assume the largest weight is $5$. Then the following 3 conditions are equivalent.
\begin{enumerate}
\item $c_1(M)=3[\omega]$.
\item The largest weight is from $P_0$ to $P_5$\footnote{See Section 3 for definition.} and is equal to $\frac{1}{2}\big(\phi(P_5)-\phi(P_0)\big)$.
\item  The largest weight is from $P_1$ to $P_3$, is from $P_2$ to $P_4$,  and is equal to $\phi(P_3)-\phi(P_1) =\phi(P_4)-\phi(P_2)$; moreover, $\phi(P_1)-\phi(P_0)=\phi(P_5)-\phi(P_4)$.
\end{enumerate}
Under any one condition,  $(2)$ and $(3)$ are the only ways the largest weight 5 occurs and $|\pm 5|$ occurs with multiplicity one at the corresponding fixed points. 
\end{theorem}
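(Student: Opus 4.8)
The plan is to use the Morse-theoretic and localization machinery that must already be in place (the moment map $\phi$ as a perfect Morse function, the weight exchange $W^-=-W^+$, and the ABBV localization formula for $\int_M$ of equivariant classes) together with the combinatorial bookkeeping of weights along "gradient paths" from $P_0$ to $P_5$. Throughout, write the weights at $P_i$ as a multiset of five integers summing (in absolute value, with signs according to whether $\phi$ increases or decreases) in a way constrained by $\phi(P_0)<\cdots<\phi(P_5)$; the index of $P_i$ under $\phi$ is $2\cdot\#\{$negative weights at $P_i\}$, and perfectness forces exactly one fixed point of each index $0,2,4,6,8,10$, so after relabeling $P_i$ has index $2i$. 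Since $H^2(M;\Z)=\Z$ with generator $[\omega]$, $c_1(M)=k[\omega]$, and evaluating the equivariant first Chern class at each fixed point gives: $\big(\text{sum of weights at }P_i\big) = k\big(\phi(P_i)-\phi(P_0)\big) + (\text{const})$, normalized so the constant is $0$ by the choice at $P_0$; more precisely the standard identity reads $\sum_j w_j(P_i) = k\,\phi(P_i) + c$ for a fixed constant $c$, and comparing $P_0$ (all weights positive) with $P_5$ (all weights negative) pins down everything. This is the skeleton.

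First I would establish the equivalences. For $(1)\Rightarrow(2)$ and $(1)\Rightarrow(3)$: assuming $c_1(M)=3[\omega]$ and that the largest weight is $5$, I would run the weight-sum identity at $P_0,\dots,P_5$. At $P_0$ all five weights are positive and sum to $3\,d$ where $d=\phi(P_5)-\phi(P_0)$ (after a further normalization using $P_5$, whose five negative weights sum to $3(\phi(P_0)-\phi(P_5))=-3d$ relative to the same constant, so the two relations are consistent and give $\sum w(P_0) = 3d - c$, $-\sum w(P_5)=3d+c$, whence $c=0$ if we normalize $\phi$ appropriately and $\sum w(P_0) = -\sum w(P_5) = 3d$... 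I would be careful here, but the upshot is a clean linear relation). Then one uses that the weights on the gradient sphere from $P_0$ to $P_5$ — whose existence and weight-matching come from the Morse theory of $\phi$ restricted to an invariant $2$-sphere joining the min to the max — force the top weight $5$ to equal $\tfrac12 d$ when $c_1=3[\omega]$. For $(2)\Rightarrow(1)$ and $(3)\Rightarrow(1)$: the hypothesis pins down $\phi(P_5)-\phi(P_0)=10$, hence from the already-cited bounds ($k\le 6$ and largest weight $\ge 5$, i.e. the content of Theorem 1) plus the weight-sum identity at $P_0$ — five positive integers, the largest being $5$, summing to $10k/\!\sim$ — only $k=3$ is consistent; I would eliminate $k=1,2,4,5,6$ one at a time using the index constraints (how many negative weights each $P_i$ must have) and the reflection $W^-=-W^+$. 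The interchange between $(2)$ and $(3)$ I would get by localization: the identities $\int_M 1 = \int_M c_1^{\,?}=\cdots$, or more efficiently the equivariant identities $\sum_i \frac{1}{e(P_i)}=0$ in low degrees, convert "largest weight from $P_0$ to $P_5$" data into "largest weight from $P_1$ to $P_3$ and from $P_2$ to $P_4$" data and force $\phi(P_1)-\phi(P_0)=\phi(P_5)-\phi(P_4)$.

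For the final sentence — that under any one of $(1)$–$(3)$, configurations $(2)$ and $(3)$ are the \emph{only} ways $5$ occurs and that it occurs with multiplicity one at the relevant fixed points — I would argue by exhaustion once $k=3$ and $d=\phi(P_5)-\phi(P_0)=10$ are fixed. The weight multiset at each $P_i$ consists of five integers with prescribed sign pattern (exactly $i$ of them negative, by the index count), with absolute values bounded by $5$, with the positive weights at $P_i$ "reappearing" as gradient directions at higher fixed points and the negative ones at lower fixed points (the standard fact that $W^+$ at $P_i$ pairs with incoming edges), and with the global constraint $W^-=-W^+$ and the localization relations. Since the search space is finite and small, I would enumerate the possible weight assignments at $P_0$ (five positive integers $\le 5$ summing to the value forced by $c_1=3[\omega]$, with $5$ appearing), propagate using the edge-matching, and check which survive all localization constraints; the survivors should be exactly the two families in $(2)$ and $(3)$, and in each the weight $5$ appears once at the two endpoints of the corresponding edge.

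The main obstacle I anticipate is controlling the "gradient edges" precisely enough: knowing that a weight $w$ at $P_i$ pointing "up" must match a weight $-w$ at some $P_j$ with $\phi(P_j)>\phi(P_i)$ is easy, but pinning down \emph{which} $P_j$ (so as to say "the largest weight is from $P_1$ to $P_3$") requires either a self-intersection/area argument on the invariant $2$-spheres (the symplectic area of the edge sphere from $P_i$ to $P_j$ equals $w\cdot(\phi(P_j)-\phi(P_i))$ divided by... the weight, forcing $\phi(P_j)-\phi(P_i)$ to be a specific multiple) or a careful use of the relation between $c_1$ evaluated on edge spheres and the weights at both ends. I expect this edge-localization step — turning global numerical coincidences into statements about specific pairs $(P_i,P_j)$ — to be where the real work lies; the rest is finite casework that the hypothesis $c_1=3[\omega]$ (equivalently $d=10$) makes tractable.
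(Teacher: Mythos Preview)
Your framework (Morse indices, the weight-sum identity $\Gamma_i - \Gamma_j = k(\phi(P_j)-\phi(P_i))$, edge matching) is the right ambient toolkit, but a key technical lemma is missing and without it the equivalences do not close up. The paper's engine is an \emph{index-difference formula}: if $w$ is the largest weight from $P$ (index $2i$) to $Q$ (index $2j$), $-w$ is not a weight at $P$, and $-w$ has multiplicity $s$ at $Q$, then the mod-$w$ matching of the weight multisets at $P$ and $Q$ forces $\Gamma_P-\Gamma_Q=(j-i+s)w$, whence $j-i+s = k\cdot\frac{\phi(Q)-\phi(P)}{w}$. This is what makes $(2)\Rightarrow(1)$ one line: $i=0$, $j=5$, $s=1$ gives $6=2k$. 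Your proposed route via ``five positive integers $\le 5$ summing to a value forced by $c_1$'' does not by itself eliminate $k\ne 3$; the sum $\Gamma_0$ can range from $15$ to $25$ under your constraints, and you have no independent handle on $\Gamma_0$ without knowing the actual weights. Similarly, under $(1)$ the index-difference formula is exactly what lists the possible locations of the largest weight $5$: it must go from $P_i$ to $P_j$ with $3\,|\,(j-i+1)$ and $\frac{\phi(P_j)-\phi(P_i)}{5}=\frac{j-i+1}{3}$, leaving only $P_0\!\to\! P_5$, $P_0\!\to\! P_2$, $P_3\!\to\! P_5$, $P_1\!\to\! P_3$, $P_2\!\to\! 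P_4$. Ruling out $P_0\!\to\!P_2$ (and its mirror) is then a substantial case analysis; your sketch does not address this branch at all.

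There is also a concrete error: condition $(3)$ does \emph{not} pin down $\phi(P_5)-\phi(P_0)=10$. From $(3)$ you get $\phi(P_3)-\phi(P_1)=\phi(P_4)-\phi(P_2)=5$ and $\phi(P_1)-\phi(P_0)=\phi(P_5)-\phi(P_4)=a$, which forces $\phi(P_2)-\phi(P_1)=\phi(P_4)-\phi(P_3)=c$ for some $c$, hence $\phi(P_5)-\phi(P_0)=2a+c+5$, still two free parameters. The paper handles $(3)\Rightarrow(1)$ via the index-difference formula applied to the pair $P_1,P_3$ (once one has ruled out $-5$ occurring at $P_1$), and then $(3)\Rightarrow(2)$ requires showing $5$ actually occurs at $P_0$, which is a separate argument analyzing the possible weight sets at $P_0$ and $P_5$ under the divisibility constraints $w\,|\,(\phi(P_j)-\phi(P_i))$ along each edge. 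Your ``localization identities $\sum 1/e(P_i)=0$'' will not produce the edge-specific statements you need here; the crucial control comes from the divisibility lemma (if $w$ is a weight from $P$ to $Q$ then $w\,|\,\phi(Q)-\phi(P)$) combined with the mod-$w$ matching, not from global integration.
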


When the largest weight is the smallest, i.e.,  is $5$, starting from Condition (2) of Theorem~\ref{thm2}, i.e., the largest weight is from $P_0$ to $P_5$ and is equal to $\frac{1}{2}\big(\phi(P_5)-\phi(P_0)\big)$, we can obtain strong conclusions as in Theorem~\ref{thm3} --- this single weight condition determines the global invariants of the manifold, namely the integral cohomology ring and the total Chern class, moreover it determines all the weights of the circle action. In Theorem~\ref{thm3}, by Theorem~\ref{thm2}, instead of using the largest weight condition, we can instead use the topological condition $c_1(M)=3[\omega]$ --- the first Chern class alone determines the above mentioned global invariants of the manifold and all the weights of the circle action.

\begin{theorem}\label{thm3}
Let  $(M, \omega)$ be a compact $10$-dimensional Hamiltonian $S^1$-manifold with moment map 
$\phi\colon M\to\R$. Assume $M^{S^1} = \{P_0,  P_1, \cdots, P_5\}$ and $[\omega]$ is a primitive integral class.   
If the largest weight is from $P_0$ to $P_5$ and is equal to $5=\frac{1}{2}\big(\phi(P_5)-\phi(P_0)\big)$, then the following 3 things hold.
\begin{enumerate}
\item The integral cohomology ring $H^*(M; \Z)$ is generated by the elements
$$1,\,\,\, [\omega], \,\,\,\frac{1}{3}[\omega]^2,\,\,\, \frac{1}{6}[\omega]^3,\,\,\, \frac{1}{18}[\omega]^4, \,\,\,\frac{1}{18}[\omega]^5,$$
and is isomorphic to the integral cohomology ring $H^*(\mathcal O; \Z)$.
\item The total Chern class
$$c(M) = 1+3[\omega]+\frac{1}{3}\big(13[\omega]^2 + 11[\omega]^3 + 5[\omega]^4 + [\omega]^5\big)$$
$$=1 + 3\alpha_1 +13 \alpha_2 +22\alpha_3 +30 \alpha_4 +6 \alpha_5,$$
where $\alpha_i\in H^{2i}(M; \Z)$ is a generator for $1\leq i\leq 5$.
The total Chern classes $c(M)$ and $c(\mathcal O)$ are isomorphic.
\item The sets of weights at the fixed points are the same as those of a subcircle action of a maximal torus of $G_2$ on $\mathcal O$ (as in Example~\ref{o}); in particular, there is exactly one weight between any pair of fixed points. The precise sets of weights at all the fixed points on $M$ are as follows:
$$P_0\colon \{1, 4, 2, 3, 5\}, \,\,\, P_5\colon -\{1, 4, 2, 3, 5\}, $$
$$P_1\colon \{-1, 1, 5, 4, 3\},\,\,\, P_4 \colon -\{-1, 1, 5, 4, 3\},$$
$$P_2\colon \{-4, -1, 1, 5, 2\},\,\,\, P_3 \colon -\{-4, -1, 1, 5, 2\}.$$ 
\end{enumerate}
\end{theorem}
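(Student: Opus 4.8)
The plan is to run the ABBV/Morse-theoretic machinery for Hamiltonian $S^1$-actions on the $6$-point fixed set, using the hypothesis to pin down the weight data first, and then deduce the cohomology ring and Chern class from it. By Theorem~\ref{thm2}, the hypothesis is equivalent to $c_1(M)=3[\omega]$, and also forces $|{\pm}5|$ to occur with multiplicity one, with $5$ realized from $P_0$ to $P_5$; so I would start from both descriptions (2) and (3) of the largest weight in Theorem~\ref{thm2} simultaneously. The index function $\lambda(P_i) = 2\cdot\#\{\text{negative weights at } P_i\}$ must take each even value $0,2,4,6,8,10$ exactly once (since $M^{S^1}$ has exactly $n+1=6$ points and the Betti numbers $b_{2i}$ are all $1$), and Lemma~\ref{order} gives the ordering $\phi(P_0)<\cdots<\phi(P_5)$; so $\lambda(P_i)=2i$. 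Thus $P_i$ has $i$ negative and $5-i$ positive weights. Combined with $W^-=-W^+$ and the constraint that $5$ appears from $P_0$ to $P_5$ with multiplicity one, the first task is to show the only possible weight multisets are those listed in (3). I would do this by bounding the remaining weights: the relation $\sum_P \frac{1}{\prod_i w_i(P)} \cdot (\text{stuff}) $—more precisely, the equations coming from $\int_M 1 = \int_M [\omega]^k/k! = 0$ for $k<5$ via ABBV localization, together with $\int_M [\omega]^5$ computing the symplectic volume and $\int_M c_1(M)[\omega]^4$ etc.—give a rigid system. Using $c_1(M)=3[\omega]$ and $[\omega](P_i)=\phi(P_i)$, localization of $c_1(M)$ reads $3\phi(P_i) = \sum_j w_j(P_i)$ at each fixed point, which, with the index data and $\phi(P_5)-\phi(P_0)=10$, nearly forces the moment-map values (up to the overall additive normalization) and then the weights.

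The key computational engine is the ABBV localization formula: for any $\eta\in H^*_{S^1}(M)$,
\[
\int_M \eta = \sum_{i=0}^5 \frac{\eta|_{P_i}}{\prod_{j} w_j(P_i)\,u^{?}},
\]
where $u$ is the generator of $H^*_{S^1}(\mathrm{pt})$, applied to $\eta = 1, [\omega]_{S^1}, \dots, [\omega]_{S^1}^5$ and to the equivariant Chern classes. The vanishing integrals $\int_M [\omega]^k=0$ for $0\le k\le 4$ give five linear relations among the $1/\prod_j w_j(P_i)$-weighted powers of $\phi(P_i)$; together with the "$c_1$" relations $\sum_j w_j(P_i)=3\phi(P_i)$ and the combinatorial constraints (index $=2i$, $W^-=-W^+$ globally, largest weight $5$ of multiplicity one from $P_0$ to $P_5$), I expect this to have the unique solution in (3), after fixing $\phi(P_0)$ by a translation. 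Once the weights are known, part (2) is a direct localization computation: expand $c(M)=\prod_j(1+w_j(P_i)u)$ at each fixed point, localize $c(M)[\omega]^{5-i}$ for $i=0,\dots,5$ to get the Chern numbers $\int_M c_k(M)[\omega]^{5-k}$ for all $k$, and then express $c(M)$ in the basis $\{[\omega]^i\}$—the divisibility of the coefficients ($\frac13$, $\frac16$, $\frac1{18}$) is exactly what part (1) needs for the ring structure. For part (1): $H^{2i}(M;\Z)\cong\Z$ for each $i$ (from $b_{2i}=1$ and Morse theory giving torsion-freeness—the action being Hamiltonian with isolated fixed points makes $H^*(M;\Z)$ torsion-free), and I need to identify the generator $\alpha_i$ of $H^{2i}$. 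Since $[\omega]$ is primitive, $\alpha_1=[\omega]$; the relations $[\omega]^2 = 3\alpha_2$, $[\omega]^3=6\alpha_3 = [\omega]\cdot 3\alpha_2 \cdot 2$, etc., must be read off from the computed intersection numbers $\int_M [\omega]\cdot\alpha_4$, $\int_M \alpha_2\alpha_3$, and so on. One extracts these by localizing products of equivariant extensions of the $\alpha_i$ (whose restrictions to fixed points are determined up to the known freedom by the index filtration à la Morse theory—each $\alpha_i$ has an equivariant extension vanishing at $P_0,\dots,P_{i-1}$), and comparing with the known volume $\int_M[\omega]^5$.

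The main obstacle is the first step: showing that the combinatorial and localization constraints actually force the weight multisets in (3), rather than merely constraining them. The difficulty is that a priori the non-extremal weights could be larger than $5$ internally as long as the global maximum is $5$—wait, no: $5$ being \emph{the} largest weight on $M$ rules that out, so every weight lies in $\{-5,\dots,5\}\setminus\{0\}$, which bounds the search space but still leaves many candidate $5$-tuples at each $P_i$ compatible with the index condition. Cutting this down requires exploiting the relations between consecutive fixed points coming from the structure of the isotropy submanifolds (the "largest weight from $P_a$ to $P_b$" language of Section 3, which I would use as in Theorem~\ref{thm2}): a weight $w$ from $P_a$ to $P_b$ appears as $+w$ at $P_a$ and $-w$ at $P_b$ and is tied to a $2$-sphere or higher-dimensional isotropy component, and such components, together with the constraint $\phi(P_b)-\phi(P_a) \le w$ (from the fact that a weight-$w$ edge spans moment-map difference at most... actually, for isolated fixed points and an isotropy $S^2$, the moment difference equals exactly the weight—this is the real handle), force the moment-map values to be consecutive-ish and the weights to interlock as listed. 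I would carry out this edge-by-edge analysis, using Theorem~\ref{thm2}(3)'s conclusions $\phi(P_3)-\phi(P_1)=\phi(P_4)-\phi(P_2)=5$ and $\phi(P_1)-\phi(P_0)=\phi(P_5)-\phi(P_4)$ as the scaffolding, then fill in all remaining weights by the ABBV vanishing relations. Once the weights are in hand, parts (2) and (3) are essentially bookkeeping, and part (1) follows by matching intersection numbers with those of $\mathcal O$ computed in Example~\ref{o}.
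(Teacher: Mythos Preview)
Your overall architecture---determine the weights first (part (3)), then read off the ring (part (1)) and total Chern class (part (2)) via equivariant extensions and restriction to fixed points---matches the paper's, and your treatment of (1) and (2) once the weights are known is essentially what the paper does: Lemma~\ref{ring} is exactly the equivariant-extension computation you describe, and the Chern class is obtained (Theorem 4(2)) by writing $c_i^{S^1}(M)$ in the equivariant basis of Lemma~\ref{extension}, restricting to fixed points, and solving.

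The substantive divergence is in how you propose to pin down the weights. The paper does \emph{not} use ABBV localization of $[\omega]^k$ for this. Its engine is the congruence of Lemma~\ref{mod}: weights at two fixed points on the same component of $M^{\Z_w}$ agree $\bmod\, w$. Since $5$ is from $P_0$ to $P_5$, the multisets at $P_0$ and $P_5$ agree $\bmod\,5$; combined with the multiplicity bounds of Lemma~\ref{mult} and Lemma~\ref{alcx}, this forces $\{1,2,3,4,5\}$ and $-\{1,2,3,4,5\}$ (Lemma~\ref{w0}, Lemma~\ref{w}), and the moment-map gaps are then fixed by a short case elimination using Lemma~\ref{sub}. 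The weights at $P_1,P_4$ (Lemma~\ref{w'}) and $P_2,P_3$ (Theorem~3(3)) follow by the same $\bmod\,5$ congruence along the $P_1$--$P_3$ and $P_2$--$P_4$ isotropy components, plus divisibility (Lemma~\ref{<}). Your proposed ABBV system---vanishing of $\int_M[\omega]^k$ for $k\le 4$ together with $\sum_j w_j(P_i)=-3\phi(P_i)+\text{const}$---yields only about ten independent scalar relations against thirty unknown weights, and you have not argued why this, with the index and $W^-=-W^+$ constraints, has a unique solution. The $\bmod\,w$ congruence is the missing structural tool that makes the determination direct.

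There is also a slip in your weight--moment-map discussion: for a weight $w$ from $P_a$ to $P_b$ one has $w\mid\big(\phi(P_b)-\phi(P_a)\big)$ by Lemma~\ref{k|}, hence $\phi(P_b)-\phi(P_a)\ge w$, not $\le w$; equality is special (Lemma~\ref{10} for the extremal pairs), and the isotropy component need not be a $2$-sphere. The paper exploits this divisibility systematically (Lemma~\ref{<}), not an inequality.
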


Next, we consider the more general case on the largest weight. In this case, we need to impose more complex condition on the largest weight to determine similar data as in Theorem~\ref{thm3}. 
Essentially we put similar conditions as in (2) and (3) in Theorem~\ref{thm2} together to obtain 
the results in Theorem~\ref{thm4}.
The condition we impose is still similar to that in Example~\ref{o} for the coadjoint orbit 
$\mathcal O$. We remark that without such a condition, we do not have the conclusion as in Theorem~\ref{thm4}, see Remark~\ref{remw}.

\begin{theorem}\label{thm4}
Let  $(M, \omega)$ be a compact $10$-dimensional effective Hamiltonian $S^1$-manifold with moment map 
$\phi\colon M\to\R$. Assume $M^{S^1} = \{P_0,  P_1, \cdots, P_5\}$ and $[\omega]$ is a primitive integral class.  Assume  the largest weight  is  from $P_0$ to $P_5$, from $P_1$ to $P_3$, and from $P_2$ to $P_4$ and is equal to $\frac{1}{2}\big(\phi(P_5)-\phi(P_0)\big) = \phi(P_3)-\phi(P_1)=\phi(P_4)-\phi(P_2)$.  Assume $\phi(P_1)-\phi(P_0)=\phi(P_5)-\phi(P_4)$. 
 Then the following 3 things hold.
\begin{enumerate}
\item The integral cohomology ring $H^*(M; \Z)$ is generated by the elements
$$1,\,\,\, [\omega], \,\,\,\frac{1}{3}[\omega]^2,\,\,\, \frac{1}{6}[\omega]^3,\,\,\, \frac{1}{18}[\omega]^4, \,\,\,\frac{1}{18}[\omega]^5,$$
and is isomorphic to the integral cohomology ring $H^*(\mathcal O; \Z)$.
\item The total Chern class
$$c(M) = 1+3[\omega]+\frac{1}{3}\big(13[\omega]^2 + 11[\omega]^3 + 5[\omega]^4 + [\omega]^5\big)$$
$$=1 + 3\alpha_1 +13 \alpha_2 +22\alpha_3 +30 \alpha_4 +6 \alpha_5,$$
where $\alpha_i\in H^{2i}(M; \Z)$ is a generator for $1\leq i\leq 5$.
The total Chern classes $c(M)$ and $c(\mathcal O)$ are isomorphic.
\item There is exactly one weight between any pair of fixed points; the precise sets of weights at all the fixed points are as follows:
$$P_0\colon \Big\{a,\, a+c,\, a+\frac{c}{3},\, a+\frac{2}{3}c,\, 2a+ c\Big\},\,\,\,  P_5\colon\Big\{-2a-c,\, -a-\frac{2}{3}c,  \, -a-\frac{c}{3},\, -a-c,\, -a\Big\},$$
$$P_1\colon \Big\{-a,\, \frac{c}{3}, \, 2a+ c,\, a+c,\, a+\frac{2}{3}c\Big\}, \,\,\,  P_4\colon \Big\{-a-\frac{2}{3}c,\, -a-c,\, -2a-c,\, - \frac{c}{3},\, a\Big\},$$
$$P_2\colon \Big\{-a-c,\, - \frac{c}{3},\, a, \, 2a+c,\, a+\frac{c}{3}\Big\}, \,\,\,  P_3 \colon \Big\{-a-\frac{c}{3},\, -2a-c,\, -a,\, \frac{c}{3}, \, a+c\Big\},$$
where $a = \phi(P_1)-\phi(P_0)$ and $c = \phi(P_2)-\phi(P_1)$.
For $a=1$ and $c=3$, we get exactly the data as those in Theorem~\ref{thm3} (3). 
\end{enumerate}
\end{theorem}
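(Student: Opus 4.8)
The plan is to read the moment map values off the hypotheses, then to pin down every weight of the action, and finally to deduce the total Chern class and the integral cohomology ring --- the last two being essentially forced once all the weights and all the $\phi(P_i)$ are known.

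\emph{Set-up and decoding the hypotheses.} By the Morse theory of the moment map (Lemma~\ref{order} and the introduction), $P_i$ has Morse index $2i$, hence $i$ negative and $5-i$ positive weights; in particular $H^{2i}(M;\Z)\cong\Z$ for $0\le i\le5$ with a generator $\alpha_i$, and $[\omega]$ itself generates $H^2(M;\Z)$. For an $S^1$-invariant $2$-sphere $Z$ joining $P$ to $Q$ and carrying weight $w$ one has $\phi(Q)-\phi(P)=w\int_Z\omega$ with $\int_Z\omega\in\Z_{>0}$; applied to the spheres realizing the largest weight $w^{\ast}$ from $P_0$ to $P_5$, from $P_1$ to $P_3$ and from $P_2$ to $P_4$, this shows those spheres have $[\omega]$-areas $2,1,1$ respectively, and, writing $a=\phi(P_1)-\phi(P_0)$ and $c=\phi(P_2)-\phi(P_1)$ (both positive integers by Lemma~\ref{k|}) and normalizing $\phi(P_0)=0$, the assumed equalities force
$$\bigl(\phi(P_0),\dots,\phi(P_5)\bigr)=\bigl(0,\ a,\ a+c,\ 3a+c,\ 3a+2c,\ 4a+2c\bigr),\qquad w^{\ast}=2a+c,$$
with $+w^{\ast}$ occurring among the weights of $P_0,P_1,P_2$ and $-w^{\ast}$ among those of $P_3,P_4,P_5$. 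I would also note that reversing the circle action replaces $\phi$ by a constant minus $\phi$, interchanges $P_i\leftrightarrow P_{5-i}$, and preserves all the hypotheses; hence the weight multiset at $P_{5-i}$ is the negative of that at $P_i$, and it suffices to determine the weights at $P_0,P_1,P_2$.

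\emph{Determining the weights (the crux).} Each weight at $P_i$ is realized by an $S^1$-invariant sphere to some other fixed point, which constrains its sign (by the index count), its size and its divisibility (via $\phi(Q)-\phi(P)=w\int_Z\omega$ applied to the corresponding moment map gap), and forces its negative to appear at the other end. To this I would add the Atiyah--Bott--Berline--Vergne localization identities
$$\sum_{i=0}^{5}\frac{\phi(P_i)^{\,k}}{\prod_{j}w_j(P_i)}=0\quad(0\le k\le4),\qquad \sum_{i=0}^{5}\frac{\phi(P_i)^{5}}{\prod_{j}w_j(P_i)}=\pm\int_M\omega^5,$$
the integrality of the equivariant Riemann--Roch numbers (the holomorphic Lefschetz / $\chi_y$-genus), and the effectiveness hypothesis (the weights at $P_0$ have greatest common divisor $1$). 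After clearing denominators this becomes a finite problem in the remaining integer unknowns; carrying out the case analysis should produce exactly the lists in~(3), show along the way that there is precisely one invariant sphere --- hence one weight --- between each pair $P_i,P_j$, and in particular force $3\mid c$. This middle step is where essentially all the work lies: the five localization equations alone leave the roughly twelve unknown weights underdetermined, so the sphere-level constraints, the Riemann--Roch integrality, effectiveness and the Morse sign pattern are all indispensable, and one must rule out every competing sign assignment and divisibility by hand.

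\emph{Chern class and cohomology ring.} With all weights and all $\phi(P_i)$ in hand the remaining statements are computations. Comparing $c_1^{S^1}(M)$ with $k_1$ times an equivariant extension of $[\omega]$ shows that $\sum_j w_j(P_i)+k_1\phi(P_i)$ is independent of $i$, where $c_1(M)=k_1[\omega]$ with $1\le k_1\le6$ by Theorem~\ref{thm1}; the weights just found then give $k_1=3$, i.e.\ $c_1(M)=3[\omega]$. Since $\int_M\omega^5\ne0$ the class $[\omega]$ generates $H^*(M;\Q)$, so each Chern class is $c_i(M)=r_i[\omega]^i$ with $r_i\int_M\omega^5=\int_M c_i(M)[\omega]^{5-i}$, and both $\int_M\omega^5=18$ and the numbers $\int_M c_i(M)[\omega]^{5-i}$ come out of the two displayed identities (the elementary symmetric functions of the weights being the restrictions of the equivariant Chern classes). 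Writing $[\omega]^i=d_i\alpha_i$, Poincar\'e duality gives $d_id_{5-i}=18$, while $d_1=1$, the divisibilities $d_1\mid d_2\mid d_3\mid d_4$, and the integrality of the $c_i(M)$ force $(d_0,\dots,d_5)=(1,1,3,6,18,18)$; this yields the stated generators $1,[\omega],\tfrac{1}{3}[\omega]^2,\tfrac{1}{6}[\omega]^3,\tfrac{1}{18}[\omega]^4,\tfrac{1}{18}[\omega]^5$ of $H^*(M;\Z)$ and the stated value of $c(M)$. Finally I would match these against $H^*(\mathcal O;\Z)$ and $c(\mathcal O)$ from Examples~\ref{o} and~\ref{c1O}, and check that $a=1,c=3$ reproduces Theorem~\ref{thm3}(3) (in the case $w^{\ast}=5$ one could alternatively invoke Theorems~\ref{thm2} and~\ref{thm3}). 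The main obstacle is, as noted, the weight determination in the middle paragraph.
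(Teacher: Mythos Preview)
Your set-up is correct: the moment-map levels $(0,a,a+c,3a+c,3a+2c,4a+2c)$ and the symmetry $P_i\leftrightarrow P_{5-i}$ under reversal of the action are exactly what the paper uses, and your end-game for $c(M)$ and $H^*(M;\Z)$, once the weights are known, is sound (the paper does the equivalent computation by expanding each $c_i^{S^1}(M)$ in the equivariant basis $\{\Tilde\alpha_j\}$ and restricting to the fixed points; see Lemma~\ref{extension} and Theorem~4(2)).

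The weight determination, however, is handled by the paper quite differently from what you outline, and your middle paragraph is too thin to count as a proof. Two ingredients you do not invoke are what make the paper's case analysis close:
\begin{itemize}
\item \emph{The modular constraint} of Lemma~\ref{mod}: if $P,Q$ lie on the same component of $M^{\Z_w}$ then their weight multisets agree modulo $w$. Applied with $w=2a+c$ between $P_0$ and $P_5$, this immediately yields that the two unknown weights $x,y$ at $P_0$ satisfy $x+y=2a+c$, and similar reductions at $P_1$ and $P_3$. This is much sharper than divisibility from sphere areas.
\item \emph{A ring-bootstrap}: Lemma~\ref{ring} writes each generator $\alpha_i$ as $\Lambda_i^-\big/\prod_{j<i}(\phi(P_j)-\phi(P_i))\cdot[\omega]^i$, where $\Lambda_i^-$ is the product of negative weights at $P_i$. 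The paper then imposes Poincar\'e duality $\alpha_1\alpha_4=\alpha_5=\alpha_2\alpha_3$ \emph{with the weights still written in terms of undetermined integer parameters}, obtaining algebraic relations (e.g.\ $l_1/l_2=k_1/k_2$ in Proposition~\ref{wb2}) that finish the job. So the ring structure is used to \emph{determine} the weights, not merely deduced from them afterwards as in your ordering.
\end{itemize}
Alongside these, the paper exploits the structure of isotropy submanifolds $M^{\Z_k}$ (via Lemmas~\ref{alcx}, \ref{ind}, \ref{<}, \ref{other}, \ref{mult}) to control which pairs of fixed points are joined by which weights (Proposition~\ref{wb1}); the raw ABBV identities and Riemann--Roch integrality you propose are not used and, as you yourself note, leave the system underdetermined. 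In short: your framing and your final deductions are fine, but the decisive step needs the mod-$w$ congruence and the Poincar\'e-duality bootstrap, and ``carrying out the case analysis should produce exactly the lists'' is precisely the part that has to be written out.
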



The compact Hamiltonian $S^1$-manifolds of dimension 10 with 6 fixed points we study here have the same even Betti numbers as 
$\CP^5$ but have integral cohomology rings different from that of $\CP^5$ (Theorems~\ref{thm3} and \ref{thm4}).
For the study of compact Hamiltonian $S^1$-manifolds 
whose even Betti numbers are the same as those of $\CP^n$, 
but may not restrict to isolated fixed points, see \cite{{T}, {LT}, {LOS}, {L}, {L2}}. 
The current work discloses close relationships between the global geometric and topological data, the first Chern class, the total Chern class and the integral cohomology ring of the manifolds and the local data related to the circle action under the context of Hamiltonian actions. 
The results obtained in this paper, besides standing on their own right,  build a foundation for further classification of the manifolds in various categories, for instance, in the symplectic, complex, or K\"ahler categories, noticing that the example $\mathcal O$ is a K\"ahler manifold; for this line of works, see for instance \cite{Mc, LOS, L2}. In the K\"ahler category, the problem is also related to the classification of {\it fake complex projective spaces}, namely compact K\"ahler manifolds which have the same Betti numbers as $\CP^n$ but are different from $\CP^n$. 

\medskip

Now we summarize the structure of the paper. In Section 2, we give the 10-dimensional example $\mathcal O$, we also give the more known examples $\CP^5$ and $\Gt_2(\R^7)$. In Section 3, we state with minor proof some basic results for Hamiltonian $S^1$-actions, building a foundation for the next sections. In Section 4, we give two ways of expressing the first Chern class of the manifold. In Section 5, we study the relation between the first Chern class of the manifold and the largest weight of the $S^1$-action, and prove Theorems~\ref{thm1} and \ref{thm2}.  Section 6 has 4 parts. In part 6.1, when the largest weight is 5, we use the weight condition to determine all the weights of the action, proving Theorem~\ref{thm3} (3). In part 6.2, for the general case, we use the weight condition to determine all the weights of the action, proving Theorem~\ref{thm4} (3). In part 6.3, we use the weight condition to determine the integral cohomology ring of the manifold, proving Theorem~\ref{thm3} (1) and Theorem~\ref{thm4} (1). In part 6.4, we use the weight condition to determine the total Chern class of the manifold, proving Theorem~\ref{thm3} (2) and Theorem~\ref{thm4} (2).


\section{the example --- a coadjoint orbit of $G_2$}

In this section, we give an explicit example, a coadjoint orbit of $G_2$ --- a compact 10-dimensional Hamiltonian $S^1$-manifold  with exactly 6 isolated fixed points.
We also include the two more known examples $\CP^5$ and $\Gt_2(\R^7)$.

\begin{example}\label{o}
In \cite{Mo}(p42-44), Morton gives a GKM-manifold, it is a $10$-dimensional coadjoint orbit 
$\mathcal O$ of the 14-dimensional rank 2 Lie group $G_2$, passing through a point on a wall of the closed Weyl chamber. The maximal torus $T^2$ of $G_2$ acts on $\mathcal O$. The $T^2$-action is Hamiltonian, and up to scale and $SL(2; \Z)$ transformation, the moment polytope UVWXYZ has the following vertices: 
$$U=(-1, -2),\,\,\, V=(-2, -1),\,\,\, W=(-1, 1),\,\,\, X=(1, 2),\,\,\, Y=(2,1),\,\,\, Z=(1, -1).$$
They are exactly the $T^2$ moment map images of the 6 isolated fixed points of the $T^2$-action.
 The weights of the $T^2$-action of the edges are:
$$w(UV)=(-1, 1), w(UZ)=(2,1), w(UW)=(0, 1), w(UY)=(1, 1), w(UX)=(1, 2),$$
$$w(VZ)=(1, 0), w(VW)=(1, 2), w(VY)=(2, 1), w(VX)=(1, 1),$$
$$w(ZW)=(-1, 1), w(ZY)=(1, 2), w(ZX)=(0, 1),$$
$$w(WY)=(1, 0), w(WX)=(2, 1),$$
$$w(YX)=(-1, 1).$$
(The picture is a GKM graph in \cite{Mo}. The graph has the following relation with the vertices of the moment polytope. Let $\psi$ be the $T^2$ moment map, and $\ut$ be the equivariant extension of 
the symplectic class $[\omega]$ similarly as in Lemma~\ref{ut} for $\psi$. Since $H^2(\mathcal O; \R)=\R$ (generated by $[\omega]$), we have $c_1^{S^1}(M)=k\ut$ with $k\in\R$, then $c_1^{S^1}(M)|_{P} =\Gamma_P = - k\psi(P)$ for any fixed point $P$ of $T^2$, where $\Gamma_P$ is the sum of weights at $P$.)

\footnote{I thank Chen He for telling me how to draw these pictures.}
\begin{tikzpicture}
\draw [help lines, ->] (-3,0)--(3,0);
\draw [help lines, ->] (0,-3)--(0,3);
\draw (-1,-2)--(-2,-1)--(-1,1)--(1,2)--(2,1)--(1,-1)--(-1,-2);
\draw (-1, -2)--(-1, 1);
\draw (-1, -2)--(1, 2);
\draw (-1, -2)--(2, 1);
\draw (-1, -2)--(1, -1);
\draw (-2, -1)--(1, 2);
\draw (-2, -1)--(2, 1);
\draw (-2, -1)--(1, -1);
\draw (-2, -1)--(-1, -2);
\draw (-1, 1)--(1, 2);
\draw (-1, 1)--(2, 1);
\draw (-1, 1)--(1, -1);
\draw (-1, 1)--(-1, -2);
\draw (1, 2)--(2, 1);
\draw (1, 2)--(1, -1);
\draw (1, 2)--(-1, -2);
\draw (2, 1)--(1, -1);
\draw (2, 1)--(-1, -2);
\draw (1, -1)--(-1, -2);
\node[below right] at(-1, -2){$U$};
\node[left] at(-2, -1){$V$};
\node[above] at(-1, 1){$W$};
\node[above left] at (1, 2){$X$};
\node[right] at (2, 1){$Y$};
\node[below] at (1, -1){$Z$};
\draw [help lines, ->] [ultra thick] (-1.5,-3)--(1.5, 3);
\draw (5,-2.5)--(5, -2)--(5,-0.5)--(5,0.5)--(5, 2)--(5,2.5);
\draw[fill] (5, -2.5) circle [radius=0.025];
\draw[fill] (5, -2) circle [radius=0.025];
\draw[fill] (5, -0.5) circle [radius=0.025];
\draw[fill] (5, 0.5) circle [radius=0.025];
\draw[fill] (5, 2) circle [radius=0.025];
\draw[fill] (5, 2.5) circle [radius=0.025];
\node[right] at (5, -2.5){$\phi(P_0)$};
\node[right] at (5, -2){$\phi(P_1)$};
\node[right] at (5, -0.5){$\phi(P_2)$};
\node[right] at (5, 0.5){$\phi(P_3)$};
\node[right] at (5, 2){$\phi(P_4)$};
\node[right] at (5, 2.5){$\phi(P_5)$};
\end{tikzpicture}

We take the subcircle $S^1\subset T^2$ generated by the direction $UX = (1, 2)$. Let $\phi$ be the moment map of the circle action on $\mathcal O$.
The image of $\phi$ is the projection of the polytope to the line segment $UX$. 
The 6 vertices of the polytope respectively project to 6 points on
the line, which correspond respectively to the $\phi$ images of the 6 fixed points of the circle action on $\mathcal O$. Let $P_0$, $P_1$, $\cdots$, $P_5$ be the $6$ fixed points on $\mathcal O$ of the circle action, in the order that 
$$\phi(P_0) < \phi(P_1) <\cdots < \phi(P_5).$$
We project the weights of the $T^2$-action to the direction $(1, 2)$, we get the weights of the 
$S^1$-action at the 6 fixed points of the $S^1$-action and their relations with the moment map values as follows.
$$P_0\colon  \{1, 4, 2, 3, 5\}$$
$$=\Big\{\phi(P_1)-\phi(P_0), \phi(P_2)-\phi(P_0), \frac{1}{3}\big(\phi(P_3)-\phi(P_0)\big), \frac{1}{3}\big(\phi(P_4)-\phi(P_0)\big),
\frac{1}{2}\big(\phi(P_5)-\phi(P_0)\big)\Big\}.$$
$$P_1\colon \{-1, 1, 5, 4, 3\}$$
$$=\Big\{\phi(P_0)-\phi(P_1), \frac{1}{3}\big(\phi(P_2)-\phi(P_1)\big), \phi(P_3)-\phi(P_1), \frac{1}{2}\big(\phi(P_4)-\phi(P_1)\big), 
\frac{1}{3}\big(\phi(P_5)-\phi(P_1)\big)\Big\}.$$
$$P_2 \colon \{-4, -1, 1, 5, 2\}$$
$$=\Big\{\phi(P_0)-\phi(P_2), \frac{1}{3}\big(\phi(P_1)-\phi(P_2)\big), \frac{1}{2}\big(\phi(P_3)-\phi(P_2)\big), \phi(P_4)-\phi(P_2), 
\frac{1}{3}\big(\phi(P_5)-\phi(P_2)\big)\Big\}.$$
$$P_3\colon \{-2, -5, -1, 1, 4\}$$
$$=\Big\{\frac{1}{3}\big(\phi(P_0)-\phi(P_3)\big), \phi(P_1)-\phi(P_3), \frac{1}{2}\big(\phi(P_2)-\phi(P_3)\big), \frac{1}{3}\big(\phi(P_4)-\phi(P_3)\big), \phi(P_5)-\phi(P_3)\Big\}.$$
$$P_4\colon \{-3, -4, -5, -1, 1\}$$
$$=\Big\{\frac{1}{3}\big(\phi(P_0)-\phi(P_4)\big), \frac{1}{2}\big(\phi(P_1)-\phi(P_4)\big), \phi(P_2)-\phi(P_4), \frac{1}{3}\big(\phi(P_3)-\phi(P_4)\big), \phi(P_5)-\phi(P_4)\Big\}.$$
$$P_5\colon \{-5, -3, -2, -4, -1\}$$
$$=\Big\{\frac{1}{2}\big(\phi(P_0)-\phi(P_5)\big), \frac{1}{3}\big(\phi(P_1)-\phi(P_5)\big), \frac{1}{3}\big(\phi(P_2)-\phi(P_5)\big), \phi(P_3)-\phi(P_5), \phi(P_4)-\phi(P_5)\Big\}.$$
\end{example}

We now give the other 2 known examples of Hamiltonian $S^1$-manifolds of dimension 10 with 6 fixed points, $\CP^5$ and $\Gt_2(\R^7)$.
\begin{example}\label{CP5}
Consider $\CP^5$ with the $S^1$-action:
$$\lambda\cdot [z_0, z_1, \cdots, z_5] = [z_0, \lambda^a z_1,\lambda^{a+b}z_2,\lambda^{a+b+c}z_3,   \lambda^{a+b+c+d}z_4, \lambda^{a+b+c+d+e}z_5],$$
where $a, b, c, d, e\in\N$.
This action has $6$ fixed points, $P_i = [0, \cdots, z_i, \cdots, 0]$, where $i=0, \cdots, 5$, respectively with  moment map $\phi$ values $\phi(P_i)$'s:
$$0, \,a,\, a+b,\, a+b+c,\, a+b+c+d,\, a+b+c+d+e.$$ 
The set of weights at $P_i$ is $\{\phi(P_j)-\phi(P_i)\}_{j\neq i}$, with $i=0, \cdots, 5$. 
\end{example}

\begin{example}\label{grass}
Let $\Gt_2(\R^7)$ be the  Grassmanian of oriented $2$-planes in $\R^7$, and let the  $S^1$-action on $\Gt_2(\R^7)$ be induced from the $S^1$-action on $\R^7$  
given by
$$\lambda\cdot (t, z_0, z_1, z_2) = (t, \lambda^{a+b+\frac{c}{2}}z_0, \lambda^{b+\frac{c}{2}}z_1, \lambda^{\frac{c}{2}} z_2),$$
where $a, b\in\N$ and $c\in 2\N$. This action has $6$ fixed points, $P_i$ with $i=0, \cdots, 5$,  where  $P_i$ and $P_{5-i}$ are the plane $(0, \cdots, z_i, \cdots, 0)$  with two different orientations. 
The fixed points have moment map $\phi$ values $\phi(P_i)$'s:
 $$-a-b -\frac{c}{2},\, -b-\frac{c}{2},\, -\frac{c}{2},\, \frac{c}{2},\, b+\frac{c}{2},\,  a +b +\frac{c}{2}.$$
The set of weights at $P_i$ is $\{\phi(P_j)-\phi(P_i)\}_{j\neq i, 5-i}\cup \frac{1}{2}\big(\phi(P_{5-i})-\phi(P_i)\big)$, with $i=0, \cdots, 5$.
\end{example}

\section{some basic results} 

In this section, we state with minor proof some basic results which are important for the study in the next sections. 

\smallskip

Let $M$ be a smooth
 $S^1$-manifold. The {\bf equivariant cohomology} of $M$ in a coefficient ring $R$ is
 $H^*_{S^1}(M; R) = H^*(S^{\infty}\times_{S^1} M; R)$, where
 $S^1$ acts on $S^{\infty}$ freely. If $p$ is a point, then $H^*_{S^1}(p; R)= H^*(\CP^{\infty}; R)=R[t]$, where $t\in H^2(\CP^{\infty}; R)$ is a generator.
 If $S^1$ acts on $M$ trivially, then $H^*_{S^1}(M; R)= H^*(M; R)\otimes R[t] =  H^*(M; R)[t]$. The projection map $\pi\colon S^{\infty}\times_{S^1} M\to \CP^{\infty}$ induces a pull back map
$\pi^*\colon H^*(\CP^{\infty}) \to H^*_{S^1}(M)$,
so that $H^*_{S^1}(M)$ is an $H^*(\CP^{\infty})$-module.

Let $(M, \omega)$ be a compact $2n$-dimensional symplectic $S^1$-manifold with isolated fixed points.  The equivariant total Chern class of $M$ can be expressed as
 $$c^{S^1}(M) = 1 + c_1^{S^1}(M) + \cdots +  c_n^{S^1}(M)\in H^*_{S^1}(M; \Z),$$
where $c_i^{S^1}(M)\in  H^{2i}_{S^1}(M; \Z)$ is the $i$-th equivariant Chern class of $M$.
Let $P$ be a fixed point, and let $\{w_1,  \cdots, w_n\}$ be the set of weights at $P$.
Then the restriction of $c^{S^1}(M)$ to $P$ is
$$c^{S^1}(M)|_P = 1 +  \sum_{i=1}^n c_i^{S^1}(M)|_P = 1  + \sum_{i=1}^n\sigma_i(w_1, \cdots,  w_n) t^i,$$
 where $\sigma_i(w_1, \cdots, w_n)$ is the $i$-th symmetric polynomial in the weights at $P$. The restriction of $c^{S^1}(M)$ to ordinary cohomology is the  total Chern class of $M$:
$$c(M) = 1 + c_1(M) + \cdots + c_n(M)\in H^*(M; \Z),$$
where $c_i(M) \in H^{2i}(M; \Z)$ is the $i$-th Chern class of $M$.

 Let $(M, \omega)$ be a compact Hamiltonian $S^1$-manifold with isolated fixed points. 
The moment map is a Morse function, its critical set is the set of fixed points.  At a fixed point $P$, if $\lambda_P$ is the number of negative weights and $\lambda_P^+$ is the number of positive weights, then the {\bf Morse index of $P$}  is $\bf 2\lambda_P$, and the {\bf Morse coindex of  $P$} is $2\lambda_P^+$.

Now we start to state the results. First, the symplectic class $[\omega]$ of a Hamiltonian $S^1$-manifold can be extended to an equivariant cohomology class $\ut$ as in Lemma~\ref{ut}, and based on this, we can obtain Lemma~\ref{k|}.

\begin{lemma}\cite{LT}\label{ut}
Let the circle act  on a compact symplectic manifold $(M,\omega)$
with moment map $\phi \colon M \to \R$.  Then there exists $\ut =[\omega -\phi t]\in H_{S^1}^2(M;\R)$ such that for any fixed point set component $F$,
$$\ut|_F =[\omega|_F]  - \phi(F)t.$$
If $[\omega]$ is an integral class, then $\ut$ is an integral class.  
\end{lemma}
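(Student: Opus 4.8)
The plan is to construct $\ut$ directly in the Cartan model of $H^*_{S^1}(M;\R)$. Let $X$ denote the vector field generating the circle action. An equivariant $2$-cochain has the form $\beta_2+\beta_0\,t$ with $\beta_2\in\Omega^2(M)^{S^1}$ and $\beta_0\in\Omega^0(M)^{S^1}$, and the Cartan differential sends it to $d\beta_2+(d\beta_0-\iota_X\beta_2)\,t$. Taking $\beta_2=\omega$ and $\beta_0=-\phi$, the cocycle condition is precisely $d\omega=0$ together with the defining equation of the moment map relating $d\phi$ and $\iota_X\omega$; hence $\omega-\phi t$ is an equivariant cocycle, and I set $\ut:=[\omega-\phi t]\in H^2_{S^1}(M;\R)$. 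Setting $t=0$ also shows that $\ut$ maps to $[\omega]$ under the forgetful map $H^2_{S^1}(M;\R)\to H^2(M;\R)$.

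For the restriction formula, let $F$ be a connected component of $M^{S^1}$. Restriction of equivariant classes to an invariant submanifold is pullback of forms, and on $F$ the vector field $X$ vanishes, so the Cartan differential on $\Omega^*(F)[t]$ reduces to ordinary $d$ and $H^*_{S^1}(F;\R)=H^*(F;\R)[t]$. Thus $\ut|_F$ is represented by $\omega|_F-(\phi|_F)\,t$; since $\phi|_F$ has differential $\iota_{X|_F}\omega|_F=0$ and $F$ is connected, $\phi|_F\equiv\phi(F)$, which yields $\ut|_F=[\omega|_F]-\phi(F)\,t$ as claimed.

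For the integrality clause, assume $[\omega]\in H^2(M;\Z)$. Because the action is Hamiltonian, $M$ is equivariantly formal, so the Serre spectral sequence of $M\hookrightarrow M_{S^1}\to\CP^{\infty}$ collapses; in particular the forgetful map $H^2_{S^1}(M;\Z)\to H^2(M;\Z)$ is onto, so I may pick an integral lift $a\in H^2_{S^1}(M;\Z)$ of $[\omega]$. Then $a\otimes\R$ and $\ut$ have the same image in $H^2(M;\R)$, hence differ by an element of the line $\pi^*H^2(\CP^{\infty};\R)\subset H^2_{S^1}(M;\R)$, say $a\otimes\R=\ut+c\,t$ with $c\in\R$. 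Replacing the moment map $\phi$ by $\phi-c$, which is again a moment map, replaces $\ut$ by $a\otimes\R$, so for this normalization of $\phi$ the class $\ut$ is integral (and automatically $\phi(F)\in\Z$ for every fixed component). Alternatively, one can take a prequantum line bundle $L$ with $c_1(L)=[\omega]$, lift the circle action to $L$ by the Kostant construction after the same additive adjustment of $\phi$, so that $S^1$ acts on $L|_F$ with integral weight $-\phi(F)$, and observe that $c_1^{S^1}(L)$ restricts to $[\omega|_F]-\phi(F)\,t$ on every fixed component; since $M^{S^1}\neq\emptyset$, restriction to $M^{S^1}$ is injective over $\R$, whence $c_1^{S^1}(L)\otimes\R=\ut$.

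The genuinely delicate point is the integrality statement, and its content is precisely that the additive normalization of $\phi$ making it integer-valued on all of $M^{S^1}$ exists; once that is in hand the rest is the formal Cartan-model computation plus equivariant formality. I therefore expect the main effort to be bookkeeping this normalization (and keeping the sign convention for the moment map consistent with the Cartan differential), rather than any substantive difficulty.
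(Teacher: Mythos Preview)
The paper does not give its own proof of this lemma; it is quoted verbatim from \cite{LT} and used as a black box. So there is no argument in the paper to compare against.

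Your proof is correct and is essentially the standard argument. The Cartan-model construction of $\ut=[\omega-\phi t]$ and the restriction computation on fixed components are routine. You have also correctly isolated the only nontrivial point: the integrality clause is really a statement about the existence of a suitable additive normalization of $\phi$, not about an arbitrary moment map, and your equivariant-formality argument (lift $[\omega]$ integrally, the difference with $\ut$ lies in $\R\cdot t$, absorb the constant into $\phi$) handles this cleanly. The alternative prequantum-bundle argument via $c_1^{S^1}(L)$ is equally valid and perhaps more concrete. Either route is exactly how this is proved in the literature.

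One small remark: you invoke equivariant formality of Hamiltonian $S^1$-manifolds without comment. This is true and well known (it follows, for instance, from the moment map being a perfect Morse--Bott function, as the paper itself recalls in the introduction), but in a self-contained write-up it would be worth a one-line citation.
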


For an $S^1$-manifold $M$,  when there exists a finite stabilizer group $\Z_k\subset S^1$ with $k > 1$, the set of points, $M^{\Z_k}$, which is pointwise fixed by $\Z_k$ but not pointwise fixed by $S^1$, is called a  {\bf $\Z_k$-isotropy submanifold}. 

\begin{lemma}\label{k|}\cite{L}
Let the circle act on a compact symplectic manifold
$(M, \omega)$ with moment map $\phi\colon M\to\R$. Assume  $[\omega]$ is an integral class. Then for any two fixed point set components $F$ and $F'$, $\phi(F) - \phi(F') \in \Z$.  If $\Z_k$ is the stabilizer group of some point on $M$,  then for any two fixed point set components $F$ and
$F'$ on the same connected component of $M^{\Z_k}$, we have $k\,|\left(\phi(F') - \phi(F)\right)$.
\end{lemma}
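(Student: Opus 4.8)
The first assertion is essentially a restatement of Lemma~\ref{ut}, so the plan is to dispose of it quickly. Since $[\omega]$ is integral, Lemma~\ref{ut} supplies an integral equivariant extension $\ut=[\omega-\phi t]\in H^2_{S^1}(M;\Z)$ of $[\omega]$ (for the appropriate normalization of $\phi$). Restricting to a connected fixed component $F$, on which $S^1$ acts trivially, we have $\ut|_F=[\omega|_F]-\phi(F)t$ lying in $H^2_{S^1}(F;\Z)=H^2(F;\Z)\oplus\Z t$; reading off the $t$-coefficient shows $\phi(F)\in\Z$. Hence $\phi(F)-\phi(F')\in\Z$ for any two fixed components, a conclusion that is independent of the normalization of $\phi$.

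For the second assertion the plan is to reduce to the first one applied to the isotropy submanifold. Let $N$ be the connected component of the $\Z_k$-fixed locus of $M$ containing $F$ and $F'$; being the fixed locus of a finite group acting symplectically, $N$ is a closed, $S^1$-invariant symplectic submanifold of $M$, of which the $\Z_k$-isotropy submanifold component in question is an open dense subset. The group $\Z_k$ acts trivially on $N$, so the $S^1$-action on $N$ factors through $S^1/\Z_k$; combining with an isomorphism $S^1/\Z_k\cong S^1$ we obtain an \emph{effective} circle action $\bar\rho$ on $N$ with $\lambda\cdot x=\bar\rho(\lambda^k,x)$ for $x\in N$. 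Differentiating this identity, the vector field generating the $S^1$-action on $N$ is $k$ times the vector field $\bar X$ generating $\bar\rho$; hence $d(\phi|_N)=k\,\iota_{\bar X}(\omega|_N)$, so $\bar\rho$ is Hamiltonian on $N$ with moment map $\tfrac1k\phi|_N$ (up to an additive constant).

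Then I would apply the first assertion to the compact Hamiltonian circle manifold $(N,\omega|_N)$ with moment map $\tfrac1k\phi|_N$: the class $[\omega|_N]$ is integral as the restriction of an integral class, and $F$ and $F'$ are connected components of the $\bar\rho$-fixed locus $N^{S^1}=N\cap M^{S^1}$. This yields $\tfrac1k\phi(F)-\tfrac1k\phi(F')\in\Z$, that is, $k\mid\big(\phi(F')-\phi(F)\big)$, as desired.

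The step I expect to require the most care is the passage to the effective action: verifying that $N$ is genuinely a symplectic submanifold (so that the first assertion really applies to $(N,\omega|_N)$), and, above all, keeping precise track of the factor $k$ relating $\phi|_N$ to the moment map of $\bar\rho$ — an off-by-$k$ error here would weaken $k\mid(\cdot)$ to divisibility by a proper divisor of $k$. One should also make explicit that ``$F,F'$ on the same connected component of $M^{\Z_k}$'' is to be read with $M^{\Z_k}$ meaning the full $\Z_k$-fixed locus (the excerpt's definition deliberately excludes the $S^1$-fixed points), i.e.\ that $F$ and $F'$ lie in the closure $N$ of a single isotropy submanifold component, which is exactly the symplectic submanifold used above; everything else is routine bookkeeping.
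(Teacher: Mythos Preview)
The paper does not give its own proof of this lemma; it is cited from \cite{L} and stated without argument. Your proof is correct and is essentially the standard one: extract integrality of $\phi(F)$ from the $t$-coefficient of the integral equivariant class $\ut$, and for the divisibility statement pass to the quotient circle $S^1/\Z_k$ acting effectively on the symplectic submanifold $N$, where the moment map becomes $\tfrac1k\phi|_N$. Your caveats about the normalization of $\phi$ and about reading $M^{\Z_k}$ as the full $\Z_k$-fixed locus (so that the $S^1$-fixed components $F,F'$ actually lie in $N$) are exactly the points that need care, and you handle them correctly.
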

When $[\omega]$ is an integral class, we will often use the fact that $\phi(F) - \phi(F') \in \Z$ in Lemma~\ref{k|} without always referring to it.

As an application of the above lemmas, we have the following lemma.
\begin{lemma}\label{<}
Let the circle act on a connected compact symplectic manifold $(M, \omega)$ with moment map $\phi\colon M\to\R$. Assume $M^{S^1}$ consists of isolated points and $[\omega]$ is an integral class.
Let $P$ be a fixed point. If $-a$ is a negative weight at $P$, then $a|\big(\phi(Q)-\phi(P)\big)$ for some $\phi(Q)<\phi(P)$. Similarly, if $b$ is a positive weight at $P$, then $b|\big(\phi(Q)-\phi(P)\big)$ for some $\phi(Q) > \phi(P)$.
\end{lemma}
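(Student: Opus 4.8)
The plan is to use the localization/ABBV-type structure of the equivariant cohomology together with the divisibility statement of Lemma~\ref{k|}. Suppose $-a<0$ is a weight at the fixed point $P$. The key idea is that the weight $-a$ on the corresponding copy of $\C\subset T_PM$ gives rise to an isotropy submanifold: if $a>1$, the subgroup $\Z_a\subset S^1$ fixes that complex line at $P$, so $P$ lies on a connected component $N$ of $M^{\Z_a}$ of positive dimension, and the $S^1$-action on $N$ (descending to an effective $S^1/\Z_a$-action) is again Hamiltonian with moment map $\phi|_N$ whose critical set is $M^{S^1}\cap N$. Since $\dim N\geq 2$ and $\phi|_N$ is a non-constant Morse function on the compact connected manifold $N$, it must attain a minimum at some fixed point $Q\in N$ with $\phi(Q)<\phi(P)$ (the point $P$ is not the minimum of $\phi|_N$ because the weight $-a$ of the $S^1$-action on $T_PN$ is negative, so $\phi|_N$ decreases in that direction). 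Then $P$ and $Q$ lie on the same connected component of $M^{\Z_a}$, so Lemma~\ref{k|} gives $a\mid\big(\phi(Q)-\phi(P)\big)$, as desired.

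First I would handle the trivial case $a=1$ separately: then $a\mid\big(\phi(Q)-\phi(P)\big)$ holds for \emph{every} fixed point $Q$ by the first part of Lemma~\ref{k|}, and we just need \emph{some} fixed point with $\phi(Q)<\phi(P)$; such a $Q$ exists because $P$ has a negative weight, hence is not the global minimum of $\phi$, and $\phi$ attains its minimum at a fixed point. Next, for $a>1$, I would carefully set up the isotropy submanifold $N=$ the connected component of $M^{\Z_a}$ containing $P$, check that it is a symplectic submanifold invariant under $S^1$ (standard, since $\Z_a$ acts symplectically and linearly near each of its fixed components), verify that $\phi|_N$ is the moment map for the residual circle action and that its critical points are exactly $M^{S^1}\cap N$. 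Then I would argue, using the weight $-a$ at $P$ in $T_PN$, that $P$ is not the minimum of $\phi|_N$, so the minimum is achieved at some other fixed point $Q\in N$ with $\phi(Q)<\phi(P)$, and conclude by Lemma~\ref{k|}. The positive-weight statement is symmetric: replace $b$ by the action of $\Z_b$ (if $b>1$) and take the \emph{maximum} of $\phi$ on the isotropy component, or equivalently apply the already-proven negative-weight case to the reversed circle action $\lambda\mapsto\lambda^{-1}$.

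The main obstacle I anticipate is the bookkeeping around the isotropy submanifold $N$: one must be sure that $N$ is connected (or pass to a connected component), that it genuinely contains $P$ with the claimed weight structure, and that $\phi|_N$ really is a Morse function whose extrema are fixed points of the full $S^1$ — all of this is routine Morse-theoretic/symplectic bookkeeping but needs to be stated cleanly so that Lemma~\ref{k|} applies verbatim. A secondary subtlety is confirming that $P$ is not the minimum of $\phi|_N$: this follows because the line in $T_PN$ on which $S^1$ acts with weight $-a$ contributes a descending direction for $\phi|_N$ at $P$, so the Morse index of $P$ as a critical point of $\phi|_N$ is positive. Everything else is a direct appeal to the lemmas already available in the excerpt.
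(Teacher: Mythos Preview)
Your proposal is correct and follows essentially the same route as the paper: pass to the connected component of $M^{\Z_a}$ containing $P$, observe that the negative weight $-a$ forces $P$ not to be the minimum of $\phi$ there, pick a fixed point $Q$ in that component with $\phi(Q)<\phi(P)$, and invoke Lemma~\ref{k|}. The paper's proof is two sentences and does not split off the case $a=1$ (since $M^{\Z_1}=M$ the same argument goes through verbatim), but otherwise your argument is the same, only more explicit about the Morse-theoretic bookkeeping.
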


\begin{proof}
Since $a$ is a negative weight at $P$, $P$ is not the minimal fixed point in $M^{\Z_a}$ containing $P$, so there is a $Q$ in $M^{\Z_a}$ containing $P$ with $\phi(Q)<\phi(P)$. Then $a|\big(\phi(Q)-\phi(P)\big)$ by Lemma~\ref{k|}. The other claim follows similarly.
\end{proof}

Based on the symplectic nature of the manifold, Tolman gives a claim as follows on the index of a fixed point for a Hamiltonian $S^1$-manifold. 

\begin{lemma}\label{ind}\cite{T}
Let the circle act on a connected compact symplectic manifold
$(M, \omega)$ with isolated fixed points and with moment map $\phi\colon M\to\R$. Then for any fixed point $P$, $2\lambda_P\leq 2l$, where $l$ is the number of fixed points $Q$'s such that $\phi(Q) < \phi(P)$.
\end{lemma}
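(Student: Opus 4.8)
The plan is to use the Morse theory of the moment map $\phi$ together with the known structure of the Betti numbers. Recall that for a compact Hamiltonian $S^1$-manifold with isolated fixed points, $\phi$ is a perfect Morse function, so the number of fixed points of Morse index $2j$ equals the Betti number $b_{2j}(M)$; and since $M$ is a compact symplectic $S^1$-manifold, the hard Lefschetz-type fact that $b_{2j}(M)\geq 1$ for $0\leq 2j\leq 2n$ holds (this is exactly the fact quoted in the introduction). So first I would fix a fixed point $P$, set $l$ to be the number of fixed points $Q$ with $\phi(Q)<\phi(P)$, and aim to show $\lambda_P\leq l$.

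The key step is the following pigeonhole/perfectness argument. Perturb $\phi$ slightly (or just use the ordering $\phi(P_0)<\cdots$ in the case at hand) so that all critical values are distinct; this does not change the Morse indices. Consider the sublevel set $M_{\phi(P)^-}=\{\phi<\phi(P)\}$, which is built by Morse theory as a CW-complex with exactly one cell of dimension $2\lambda_Q$ for each of the $l$ fixed points $Q$ below $P$. Because $\phi$ is a perfect Morse function on $M$, the restriction to this sublevel set is also perfect (no cancellation can occur, since all indices are even and the Poincaré polynomial of $M_{\phi(P)^-}$ is exactly $\sum_{Q:\,\phi(Q)<\phi(P)} t^{2\lambda_Q}$). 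Hence $b_{2j}(M_{\phi(P)^-})$ equals the number of fixed points $Q$ below $P$ with $\lambda_Q=j$. Now attaching the cell of $P$ (of dimension $2\lambda_P$) to $M_{\phi(P)^-}$ must not create negative Betti numbers, but more to the point: the boundary sphere $S^{2\lambda_P-1}$ of the attaching cell of $P$ sits inside $M_{\phi(P)^-}$ and must be non-trivial in an appropriate sense. The cleanest way to extract $\lambda_P\leq l$ is: $b_{2j}(M_{\phi(P)^-})\geq 1$ for every $0\leq j\leq \lambda_P-1$ would force at least $\lambda_P$ fixed points strictly below $P$, giving $l\geq \lambda_P$.

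So the remaining point is to argue that $b_{2j}(M_{\phi(P)^-})\geq 1$ for all $0\le j\le \lambda_P-1$. This I would get from the global perfectness of $\phi$ on $M$: for each such $j$, since $b_{2j}(M)\geq 1$, there is at least one fixed point of index $2j$ on $M$; and any fixed point $Q'$ with $\lambda_{Q'}=j<\lambda_P$ — combined with the fact that the symplectic structure makes $\phi$ behave like a Kähler Morse function, and more concretely via Lemma~\ref{ind} applied inductively — must lie below $P$. Actually the more robust route, and the one I'd commit to, is a downward induction on the number of fixed points: one shows that the index-$2j$ generators of $H^{2j}(M)$ can be represented by fixed points lying in $M_{\phi(P)^-}$ whenever $j<\lambda_P$, using that the top cell structure near $P$ has index $2\lambda_P$ and that a perfect Morse function cannot have its index jump past an unoccupied value. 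The main obstacle is precisely pinning down this last claim rigorously — that perfectness forces the indices below $\lambda_P$ to already be "used up" by fixed points below $P$, rather than by fixed points above $P$; here one genuinely needs the symplectic input (not just that $\phi$ is perfect Morse, which a priori a generic Morse function on $\CP^n$ also is), and I expect the argument to invoke the $S^1$-equivariant formality / the fact that restriction $H^*_{S^1}(M)\to H^*_{S^1}(M_{\phi(P)^-})$ is surjective in the relevant degrees. I would structure the write-up so that this symplectic step is isolated as the one nontrivial ingredient, with everything else being bookkeeping with Poincaré polynomials of sublevel sets.
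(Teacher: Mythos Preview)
The paper does not supply its own proof of this lemma; it is cited from \cite{T} and stated without argument. So there is no ``paper's proof'' to compare against here.

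That said, your outline has a genuine gap, and it is precisely the one you flagged yourself. The step you need --- that for each $0\le j\le\lambda_P-1$ there is a fixed point of index $2j$ with $\phi$-value below $\phi(P)$ --- does \emph{not} follow from perfectness of $\phi$ together with $b_{2j}(M)\ge 1$. A purely combinatorial counterexample: a sequence of four critical points in increasing $\phi$-order with Morse indices $0,4,2,6$ satisfies every consequence of ``perfect Morse function with all even Betti numbers $\ge 1$'' on a hypothetical $6$-manifold, yet the second point has $\lambda=2$ and only $l=1$ point below it. So perfectness plus the Betti number bound is not enough. Your proposed patch via surjectivity of $H^*_{S^1}(M)\to H^*_{S^1}(M_{\phi(P)^-})$ does not close the gap either: both sides are free $\Q[t]$-modules with generators indexed by fixed points (all of them, respectively those below $P$), and the surjectivity records nothing beyond the inclusion of fixed-point sets.

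The missing symplectic input is that the negative normal space at $P$ is a complex (hence symplectic) subspace, so that for an invariant compatible metric the unstable manifold $W^u(P)$ of $-\nabla\phi$ is an $S^1$-invariant \emph{symplectic} submanifold of dimension $2\lambda_P$ on which $\phi$ restricts to a moment map with $P$ as the unique maximum. The bound $l\ge\lambda_P$ is then extracted by doing Hamiltonian Morse theory on this object (or equivalently on the weighted projective space $S^{2\lambda_P-1}/S^1$ obtained from a level set of $W^u(P)$ just below $\phi(P)$): it has total rational Betti number $\lambda_P$, and the downward gradient flow stratifies it according to which fixed point in $\{\phi<\phi(P)\}$ each orbit limits to, forcing at least $\lambda_P$ such limiting fixed points. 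This is the geometric step you correctly anticipated needing but did not provide; without it the argument is a sketch, not a proof.
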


Next, for any two fixed  points on a connected component of an isotropy submanifold $M^{\Z_k}$, the sets of $S^1$-weights at them are related as follows.

\begin{lemma}\cite{T}\label{mod}
Let the circle act on a compact symplectic manifold $(M, \omega)$. Let $P$ and $Q$ be fixed points which lie on the same connected component of $M^{\Z_k}$ for some $k > 1$. Then the weights of the $S^1$-action at $P$ and $Q$ are equal modulo $k$.
\end{lemma}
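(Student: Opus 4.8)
The plan is to split the weights at $P$ (and at $Q$) into a ``tangent'' part and a ``normal'' part with respect to the connected component $N$ of $M^{\Z_k}$ containing $P$ and $Q$, and to check the congruence modulo $k$ separately for each part. First I would fix an $S^1$-invariant $\omega$-compatible almost complex structure $J$ on $M$; since $J$ is in particular $\Z_k$-invariant and $\Z_k$ acts by $J$-holomorphic symplectomorphisms, $N$ is a $J$-complex, $S^1$-invariant symplectic submanifold of $M$, and $P$ and $Q$ are fixed points of the induced $S^1$-action on $N$. Writing $\nu\to N$ for the (complex, $S^1$-equivariant) normal bundle, the weights of the $S^1$-action at $P$ decompose as the weights on $T_P N$ together with the weights on the fibre $\nu_P$, and similarly at $Q$, so it suffices to match, modulo $k$, the tangent weights of $P$ and $Q$ in $N$ and the normal weights of $P$ and $Q$ in $\nu$.

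For the tangent part: $\Z_k$ acts trivially on $N$, hence trivially on $T_P N$ and $T_Q N$, which forces every $S^1$-weight occurring in $T_P N$ or $T_Q N$ to be divisible by $k$; since $N$ is connected these spaces have the same dimension, so modulo $k$ the tangent weights at $P$ and at $Q$ agree, each being $\frac{1}{2}\dim N$ copies of $0$. For the normal part: the $\Z_k$-action splits $\nu$, over the connected base $N$, into isotypic sub-bundles $\nu=\bigoplus_{j=1}^{k-1}\nu_j$ on which $\Z_k$ acts through the character $\zeta\mapsto\zeta^j$, where $\zeta=e^{2\pi i/k}$; only $j\not\equiv 0 \pmod k$ occur, because $N$ is exactly a component of the $\Z_k$-fixed set so the normal fibre has no $\Z_k$-fixed vector. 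Each $\nu_j$ has locally constant, hence constant, rank $m_j$ over $N$, so $(\nu_j)_P$ and $(\nu_j)_Q$ have the same complex dimension $m_j$; and at any $S^1$-fixed point every $S^1$-weight in the fibre of $\nu_j$ is $\equiv j\pmod k$, since restricting the $S^1$-action on that summand to $\Z_k$ gives the character $\zeta^j$. Hence $P$ and $Q$ carry the same number of normal weights in each residue class mod $k$, and combining this with the tangent count gives the claimed equality of weight multisets modulo $k$.

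I expect the only step needing genuine care to be the structural one: checking that $N$ is a smooth $S^1$-invariant symplectic submanifold carrying a well-defined $S^1$-equivariant normal bundle, and that the $\Z_k$-isotypic pieces of that normal bundle are honest sub-bundles of constant rank over the connected base $N$ --- this is where connectedness of $N$, the one hypothesis not yet used, enters. Once that input is in place the congruences are immediate from the definition of the weights. One could alternatively avoid almost complex structures by using the exponential map of an $S^1$-invariant Riemannian metric to identify a neighbourhood of $N$ with a neighbourhood of the zero section of the real normal bundle and running the same $\Z_k$-isotypic decomposition there; the complex formulation is simply cleaner for bookkeeping of the weights.
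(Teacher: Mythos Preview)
The paper does not prove this lemma; it is quoted from Tolman \cite{T} without proof. Your argument is correct and is exactly the standard one: decompose $T_PM$ and $T_QM$ into tangent and normal directions to the component $N$ of $M^{\Z_k}$, observe that all tangent weights are $\equiv 0\pmod k$, and use that the $\Z_k$-isotypic pieces of the normal bundle have constant rank over the connected base $N$ to match the normal residues. The only cosmetic remark is that you do not strictly need an invariant almost complex structure to define the isotypic decomposition of the normal bundle---the real $\Z_k$-representation already splits into isotypic sub-bundles of locally constant rank---but the complex structure is convenient for tracking signs of weights and is harmless here.
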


A variant of Lemma~\ref{mod} is: the weights of the $S^1$-action at the normal spaces 
of the component of $M^{\Z_k}$ at $P$ and at $Q$ are equal modulo $k$. We will refer to Lemma~\ref{mod} for this fact. 

\smallskip

The next result by Jang and Tolman is a claim on the multiplicity of the smallest positive weight on the manifold. It is given for the more general category of almost complex manifolds than symplectic manifolds.
\begin{lemma}\cite{JT}\label{alcx}
Let the circle act on a closed $2n$-dimensional almost
complex manifold  with isolated fixed points. Let $w$ be the smallest
positive weight that occurs at the fixed points on $M$. Then given any 
$k \in\{0, 1, . . . , n-1\}$, the number of times the weight $-w$ occurs at
fixed points of index $2k+2$ is equal to the number of times the 
weight $+w$ occurs at fixed points of index $2k$.
\end{lemma}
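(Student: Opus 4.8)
The plan is to prove Lemma~\ref{alcx} by the Atiyah--Segal--Singer fixed point formula for the equivariant Hirzebruch $\chi_y$-genus, combined with the elementary rigidity argument of Atiyah and Bott, and then by extracting the coefficient of a single monomial $s^{w}$ in the character variable $s$; the hypothesis that $w$ is the smallest positive weight is exactly what turns that coefficient into a comparison of the two counts in the statement.

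First I would recall that integrating the equivariant $\chi_y$-characteristic class of $M$ over $M$ computes a $y$-polynomial combination of equivariant indices of elliptic operators (the twisted $\mathrm{Spin}^c$-Dirac operators associated with the canonical $\mathrm{Spin}^c$-structure of the almost complex manifold), hence is an element of $R(S^1)[y]=\Z[s,s^{-1}][y]$; by the fixed point formula this element equals
$$\Sigma(s,y)\;=\;\sum_{P\in M^{S^1}}\ \prod_{i=1}^{n}\frac{1+y\,s^{w_i(P)}}{1-s^{w_i(P)}},$$
where $w_1(P),\dots,w_n(P)$ are the weights at $P$. Expanding each factor as a power series about $s=0$ --- a positive weight $w_i(P)$ contributes $1+(1+y)s^{w_i(P)}+\cdots$ and a negative weight $-u$ contributes $-y-(1+y)s^{u}-\cdots$ --- shows that only non-negative powers of $s$ occur in $\Sigma(s,y)$, while the symmetric expansion about $s=\infty$ shows only non-positive powers occur. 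Since $\Sigma(s,y)$ is a Laurent polynomial in $s$, it is therefore independent of $s$ (it equals $\chi_y(M)$); in particular the coefficient of $s^{w}$ in its expansion about $s=0$ vanishes.

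Next I would compute that coefficient. Because $W^-=-W^+$ holds for almost complex $S^1$-manifolds with isolated fixed points (\cite{H}) and $w$ is the smallest positive weight, every weight at every fixed point has absolute value at least $w$; hence an $s^{w}$-term in the product over $i$ at a fixed point $P$ can only arise by taking the $s^{w}$-term of a single factor whose weight is $\pm w$ together with the constant terms ($1$ for a positive weight, $-y$ for a negative weight) of all the remaining factors. Writing $m^+(P)$, respectively $m^-(P)$, for the number of weights at $P$ equal to $+w$, respectively $-w$, the coefficient of $s^{w}$ in the contribution of $P$ is $(1+y)(-y)^{\lambda_P-1}\big[(-y)\,m^+(P)-m^-(P)\big]$, where $2\lambda_P$ is the Morse index of $P$. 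Summing over $P$, setting the result equal to $0$, and cancelling the non-zerodivisor $1+y$ gives
$$\sum_{P\in M^{S^1}}(-y)^{\lambda_P}\big[y\,m^+(P)+m^-(P)\big]\;=\;0.$$
Grouping the fixed points by the value of $\lambda_P$ and equating the coefficient of each power of $y$ to zero yields $b_0=0$ and $b_{k+1}=a_k$ for all $k\ge 0$, where $a_k$ is the number of times $+w$ occurs at fixed points of index $2k$ and $b_k$ the number of times $-w$ occurs at fixed points of index $2k$; this is precisely the assertion of the lemma.

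I expect the real care to be needed not in the machinery --- the equivariant index theorem and the Atiyah--Bott rigidity argument are standard --- but in the last step: one must check that no product of two or more factors, and no higher power $s^{2u},s^{3u},\dots$ coming from a small weight $-u$, can contribute to the coefficient of $s^{w}$, which is exactly where the minimality of $w$ together with $W^-=-W^+$ enters essentially, and one must track the elementary bookkeeping with the factors $(-y)$ correctly so that the parity/index relation comes out in the stated form.
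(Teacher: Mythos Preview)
Your argument is correct and is essentially the proof given in \cite{JT}: one uses the fixed point formula for the equivariant $\chi_y$-genus, invokes Atiyah--Bott rigidity to see that $\Sigma(s,y)$ is independent of $s$, and then reads off the coefficient of $s^{w}$, where minimality of $w$ (together with $W^{-}=-W^{+}$) forces that coefficient to be the telescoping sum you wrote. The paper under review does not supply its own proof of this lemma; it simply cites the result from \cite{JT}, so there is no alternative argument to compare against.

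Two minor remarks on the write-up. First, the appeal to $W^{-}=-W^{+}$ is indeed needed to rule out negative weights of absolute value smaller than $w$ (the hypothesis only controls positive weights), and this fact for closed almost complex $S^1$-manifolds with isolated fixed points is established independently of, and prior to, the present lemma, so there is no circularity. Second, your bookkeeping checks out: after cancelling $1+y$ and regrouping, the vanishing of each coefficient of $(-y)^k$ gives exactly $b_{k+1}=a_k$.
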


Now, we start to consider compact Hamiltonian $S^1$-manifolds with minimal isolated fixed points.  We will use the following facts on the Morse indices of the fixed points and (\ref{eqorder}) so often that we do not refer to the lemma. 

\begin{lemma}\label{order}\cite{L}
Let the circle act on a compact $2n$-dimensional symplectic manifold $(M, \omega)$ with moment map
 $\phi\colon M\to\R$. Assume $M^{S^1}$ consists of $n+1$ isolated points, denoted $M^{S^1}=\{P_0,  P_1, \cdots, P_n\}$. Then the points in $M^{S^1}$ can be labelled so that $P_i$  has Morse index $2i$, and we have $H^{2i}(M; \Z) = \Z$ and $H^{2i-1}(M; \Z)=0$ for all $0\leq 2i\leq 2n$. 
Moreover, 
\begin{equation}\label{eqorder}
\phi(P_0) < \phi(P_1) < \cdots < \phi(P_n). 
\end{equation}
\end{lemma}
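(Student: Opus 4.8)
The plan is to prove Lemma~\ref{order}, which collects the standard facts about compact Hamiltonian $S^1$-manifolds with the minimal number $n+1$ of isolated fixed points: that the points can be indexed so $P_i$ has Morse index $2i$, that the cohomology is that of a ``formal $\CP^n$'' (all odd groups vanish, all even groups are $\Z$), and that the moment map values are strictly increasing in the same indexing.

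First I would recall that since $\phi$ is the moment map of the $S^1$-action, it is a Morse--Bott function whose critical set is exactly $M^{S^1}$, and here the critical set is $n+1$ isolated points, so $\phi$ is an honest Morse function. The key structural input is that $\phi$ is a \emph{perfect} Morse function: this follows because the negative normal bundle at each fixed point is a complex (hence orientable) bundle, so the Morse inequalities hold over $\Z$ and in fact are equalities. Perfectness means $b_{2k}(M)$ equals the number of fixed points of Morse index $2k$ (and odd Betti numbers vanish since all indices are even). Now $\sum_k (\text{number of index-}2k\text{ points}) = n+1$ and $\sum_k b_{2k}(M) = n+1$; but we also know $b_{2k}(M)\geq 1$ for every $0\leq 2k\leq 2n$ because $[\omega]^k\neq 0$ in $H^{2k}(M;\R)$ (a compact symplectic manifold has $[\omega]^k\neq 0$), forcing each $b_{2k}\geq 1$. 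Since there are exactly $n+1$ values of $k$ in $\{0,1,\dots,n\}$ and the total is $n+1$, each $b_{2k}$ is exactly $1$, and hence exactly one fixed point has index $2k$ for each $k=0,\dots,n$. Label that fixed point $P_k$; this gives the indexing with $\index(P_i)=2i$. Perfectness over $\Z$ (or a direct Morse-theory cell-attachment argument: $M$ has a CW structure with one cell in each even dimension $0,2,\dots,2n$ and no odd cells) then gives $H^{2i}(M;\Z)=\Z$ and $H^{2i-1}(M;\Z)=0$; torsion-freeness follows since there are no odd cells to create torsion in the even cohomology.

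For the inequality \eqref{eqorder}, I would use Lemma~\ref{ind} (Tolman's index estimate), which says $2\lambda_P \leq 2l$ where $l = \#\{Q : \phi(Q)<\phi(P)\}$. Order the fixed points by their (distinct, since isolated and $\phi$ can be perturbed within its cohomology class or is already injective on $M^{S^1}$ after a generic choice --- actually here one argues $\phi$ is injective on fixed points because two fixed points with equal $\phi$ value would contradict the index count) moment values as $\phi(Q_0)<\phi(Q_1)<\cdots<\phi(Q_n)$. Then $Q_j$ has at most $j$ fixed points below it, so $\lambda_{Q_j}\leq j$. But the map $j\mapsto \lambda_{Q_j}$ must be a bijection of $\{0,\dots,n\}$ onto the set of indices $\{0,\dots,n\}$ (each index $0,2,\dots,2n$ is attained exactly once, by the previous paragraph), and the only non-decreasing-dominated bijection with $\lambda_{Q_j}\leq j$ is $\lambda_{Q_j}=j$. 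Hence $Q_j = P_j$, i.e. the moment-value order coincides with the index order, which is exactly \eqref{eqorder}.

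The main obstacle — really the only non-formality — is justifying perfectness of the Morse function and the resulting torsion-free, concentrated-in-even-degrees cohomology rigorously; but this is the well-known Atiyah--Bott / Frankel argument for Hamiltonian $S^1$-actions (negative normal bundles are complex, hence orientable, so Morse--Bott inequalities are equalities over $\Z$), so I would simply cite it. A secondary subtlety is ruling out coincidences $\phi(P_i)=\phi(P_j)$ for $i\neq j$; this is handled by noting that a level set of $\phi$ containing two critical points of the same index would, via the Morse-theoretic description of how Betti numbers jump, contradict $b_{2k}=1$. Everything else is bookkeeping with the index inequality of Lemma~\ref{ind}. Since this lemma is quoted from \cite{L}, in the paper I would give only this brief argument and point to \cite{L} (and the classical references therein) for the details.
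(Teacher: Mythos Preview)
The paper does not actually prove this lemma: it is quoted from \cite{L} and stated without argument, so there is no ``paper's own proof'' to compare against. Your sketch is a correct reconstruction of the standard argument and is exactly the kind of justification one would expect behind the citation.

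One small point: your handling of possible coincidences $\phi(P_i)=\phi(P_j)$ is a bit muddled (perturbation is unnecessary, and the parenthetical about ``two critical points of the same index'' is not the right mechanism). In fact distinctness falls out of your own argument with no extra work. Order the fixed points so that $\phi(Q_0)\leq\phi(Q_1)\leq\cdots\leq\phi(Q_n)$. For each $j$, the number $l_j$ of fixed points with $\phi$-value \emph{strictly} less than $\phi(Q_j)$ satisfies $l_j\leq j$, and Lemma~\ref{ind} gives $\lambda_{Q_j}\leq l_j\leq j$. Since $(\lambda_{Q_j})_{j=0}^n$ is a permutation of $\{0,1,\dots,n\}$, summing forces $\lambda_{Q_j}=j$ for every $j$, and hence also $l_j=j$, which means $\phi(Q_{j-1})<\phi(Q_j)$ strictly. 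This simultaneously proves the strict ordering and identifies $Q_j=P_j$.
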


\medskip

In a symplectic $S^1$-manifold $(M, \omega)$ with isolated fixed points,
if $w>0$ is a weight at a fixed point $P$, $-w$ is a weight  at a fixed point $Q$, and $P$ and $Q$ are on the same connected component of
$M^{\Z_w}$, we say that {\bf $w$ is a weight from $P$ to $Q$} or {\bf there is a weight $w$ from $P$ to $Q$}. 
When the signs of $w$ at $P$ and at $Q$ are clear, we will also say that {\bf there is a weight $w$ or $-w$ between $P$ and $Q$}, or {\bf $w$ or $-w$ is a weight between $P$ and $Q$}.

\begin{lemma}\label{10}\cite{L}
Let the circle act on a compact $2n$-dimensional symplectic manifold
$(M, \omega)$ with moment map $\phi\colon M\to\R$. Assume $M^{S^1}=\{P_0, P_1, \cdots, P_n\}$ and $[\omega]$ is a {\it primitive} integral class. Then there is a weight $w = \phi(P_1)-\phi(P_0)$ from $P_0$ to $P_1$, and there is a weight $w'=\phi(P_n)-\phi(P_{n-1})$ from $P_{n-1}$ to $P_n$.
\end{lemma}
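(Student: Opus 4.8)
The plan is to prove Lemma~\ref{10} by analyzing the fixed points of extremal moment map value together with the Morse-theoretic structure of the action. Recall that $P_0$ is the minimum of $\phi$, so all $n$ weights at $P_0$ are positive, and $P_n$ is the maximum, so all $n$ weights at $P_n$ are negative. Since $W^- = -W^+$, the set of weights at $P_n$ is the negative of a set of positive weights occurring elsewhere. The idea is to pin down the \emph{smallest} positive weight $w_{\min}$ on $M$ using Lemma~\ref{alcx} (the Jang--Tolman result), and to show that $w_{\min} = \phi(P_1)-\phi(P_0)$, and that this is a weight from $P_0$ to $P_1$.

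First I would argue that $w_{\min}$ occurs at $P_0$: by Lemma~\ref{alcx} with $k=0$, the number of times $+w_{\min}$ occurs at fixed points of index $0$ equals the number of times $-w_{\min}$ occurs at fixed points of index $2$. The only fixed point of index $0$ is $P_0$; if $w_{\min}$ did not occur at $P_0$ then $-w_{\min}$ would not occur at $P_1$ either, and one can iterate (or use that $w_{\min}$ must occur somewhere as a positive weight, hence by $W^-=-W^+$ also as a negative weight) to derive a contradiction with the global count — essentially $-w_{\min}$ can never appear at any fixed point, contradicting $W^-=-W^+$ and the fact that $w_{\min}\in W^+$. Actually the cleaner route: since $w_{\min}\in W^+$, it occurs at some fixed point; since $W^-=-W^+$, $-w_{\min}\in W^-$ occurs at some fixed point of positive index; take the one of smallest index, say $2k+2$ with $k\geq 0$, and Lemma~\ref{alcx} forces $+w_{\min}$ to occur at a fixed point of index $2k$; minimality of the index at which $-w_{\min}$ occurs, combined with running the argument down, forces $k=0$, so $+w_{\min}$ occurs at $P_0$ and $-w_{\min}$ occurs at $P_1$.

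Next, having $+w_{\min}$ at $P_0$ and $-w_{\min}$ at $P_1$, I would show $\phi(P_1)-\phi(P_0)=w_{\min}$ and that this weight goes from $P_0$ to $P_1$. Since $P_0$ and $P_1$ both carry the weight $\pm w_{\min}$, consider the isotropy submanifold $M^{\Z_{w_{\min}}}$: the $\Z_{w_{\min}}$-sphere through $P_0$ in the direction of weight $w_{\min}$ has its other fixed point $Q$ with $\phi(Q)>\phi(P_0)$ and $\phi(Q)-\phi(P_0)$ divisible by $w_{\min}$ (Lemma~\ref{k|}); moreover along this sphere the weight drops from $+w_{\min}$ at $P_0$ to $-w_{\min}$ at $Q$, and the gradient sphere contributes to the Morse index so that $Q$ has index $2$, i.e. $Q=P_1$. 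Then $\phi(P_1)-\phi(P_0)=w_{\min}$ because $[\omega]$ is \emph{primitive} integral: the sphere has symplectic area $\phi(P_1)-\phi(P_0)$, and equivariantly using $\ut$ from Lemma~\ref{ut} restricted to this sphere, the area equals $w_{\min}$ times the degree, which must be $1$ by primitivity (here one uses that $H^2(M;\Z)=\Z$ generated by a primitive class, so an embedded $S^1$-invariant sphere on which $S^1$ acts with weight $w_{\min}$ has $\langle[\omega],[\text{sphere}]\rangle$ a multiple of... — more precisely, the $\Z_{w_{\min}}$-sphere is covered $w_{\min}$-to-one up to the $S^1$-action, giving area exactly $w_{\min}$ when $[\omega]$ is primitive). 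The statement for $P_{n-1},P_n$ follows by applying the same argument to the reversed action $-\phi$ (equivalently $\lambda\mapsto\lambda^{-1}$), which swaps minimum and maximum.

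The main obstacle I anticipate is the last step: carefully justifying that the extremal gap $\phi(P_1)-\phi(P_0)$ equals $w_{\min}$ exactly, rather than a multiple of it, which is precisely where the primitivity of $[\omega]$ must be used, and pinning down that the relevant $Q$ is $P_1$ (index $2$) rather than some higher-index fixed point. This requires combining the Morse index bound of Lemma~\ref{ind} (which gives $\lambda_{P}\leq l$), the structure of the gradient sphere of weight $w_{\min}$, and the divisibility from Lemma~\ref{k|}; one must rule out the possibility that the $\Z_{w_{\min}}$-sphere through $P_0$ connects $P_0$ to a fixed point other than $P_1$ while some \emph{other} fixed point accounts for the index-$2$ slot. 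I expect this is handled by noting that the existence of this sphere already forces an index-$2$ fixed point of moment value a positive multiple of $w_{\min}$ above $\phi(P_0)$, combined with Lemma~\ref{alcx}'s exact count forcing $-w_{\min}$ at $P_1$ with the matching multiplicity, so the sphere's top endpoint must be $P_1$ and the area computation with the primitive class then gives the gap exactly.
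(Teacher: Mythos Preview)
Your approach has a fundamental flaw: the smallest positive weight $w_{\min}$ on $M$ need not equal $\phi(P_1)-\phi(P_0)$, and need not occur at $P_0$ at all. Take $\CP^5$ with the action of Example~\ref{CP5} where $a=2$ and $b=c=d=e=1$; then $\phi(P_1)-\phi(P_0)=2$ and the weight from $P_0$ to $P_1$ is indeed $2$ (so the lemma holds), but the weights at $P_0$ are $\{2,3,4,5,6\}$ while the weights at $P_1$ are $\{-2,1,2,3,4\}$, so $w_{\min}=1$ occurs at $P_1$ and not at $P_0$. Thus both your claim that $w_{\min}$ appears at $P_0$ and your identification $w_{\min}=\phi(P_1)-\phi(P_0)$ fail in this example. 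Lemma~\ref{alcx} gives the relation (multiplicity of $-w_{\min}$ at index $2k+2$) $=$ (multiplicity of $+w_{\min}$ at index $2k$), but this telescoping relation is perfectly consistent with the multiplicity at $P_0$ being zero, so your ``running the argument down'' does not force $k=0$.

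The argument that actually works, and where primitivity genuinely enters, goes through equivariant cohomology rather than the smallest weight. Consider the integral class $\ut+\phi(P_0)t\in H^2_{S^1}(M;\Z)$ from Lemma~\ref{ut}; it vanishes at $P_0$, so by Corollary~\ref{cor} it equals $a_1\Tilde\alpha_1$ for some $a_1\in\Z$. Restricting to ordinary cohomology gives $[\omega]=a_1\alpha_1$, and since $[\omega]$ is primitive and $\alpha_1$ generates $H^2(M;\Z)=\Z$ (Proposition~\ref{equibase}), we get $a_1=\pm1$. Restricting instead to $P_1$ gives $\phi(P_0)-\phi(P_1)=a_1\Lambda_1^-$; signs force $a_1=1$, so the unique negative weight at $P_1$ is exactly $-(\phi(P_1)-\phi(P_0))=:-w$. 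Now the component of $M^{\Z_w}$ containing $P_1$ must contain a fixed point below $P_1$ (since $P_1$ has a negative weight there), which can only be $P_0$; as $P_1$ has index $2$ in this component, the gradient sphere from $P_0$ to $P_1$ inside it shows $w$ is a weight at $P_0$ from $P_0$ to $P_1$. The statement for $P_{n-1},P_n$ follows by reversing the circle action.
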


\begin{lemma}\label{other}\cite{L}
Let the circle act on a compact $2n$-dimensional symplectic manifold
$(M, \omega)$ with moment map $\phi\colon M\to\R$.  Assume   
$M^{S^1} = \{P_0, P_1, \cdots, P_n\}$ and $[\omega]$ is an integral class. Assume there is a weight $a=\phi(P_i)-\phi(P_0)$ from $P_0$ to $P_i$ for some $i\neq 0$. Then for any other weight $a'$ at $P_0$, which  could be equal to $a$, there exists $j\neq 0, i$, such that $a'$ is a weight from $P_0$ to $P_j$. A similar claim holds if we replace $P_0$ by $P_n$.
\end{lemma}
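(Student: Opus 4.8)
\emph{Proof idea.} The plan is to reduce to the minimum $P_0$, pass to an isotropy submanifold, and apply the small--weight counting of Lemma~\ref{alcx}. The statement for $P_n$ follows from that for $P_0$ by reversing the circle action $\lambda\cdot x\mapsto\lambda^{-1}\cdot x$, which turns $P_n$ into the minimum, replaces $\phi$ by $-\phi$, and negates every weight; so I would only treat $P_0$. Since $P_0$ has Morse index $0$, all $n$ of its weights are positive; I fix the occurrence of $a$ realizing the given edge from $P_0$ to $P_i$, and let $a'$ be any \emph{other} weight at $P_0$, so that if $a'$ and $a$ are the same integer then that integer occurs at $P_0$ with multiplicity at least two.

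Let $Z$ be the connected component of $M^{\Z_{a'}}$ containing $P_0$ (with the convention that $M^{\Z_1}=M$). Then $Z$ is a compact connected symplectic manifold with isolated fixed points, $\phi|_Z$ is a moment map with minimum $P_0$, and the $S^1$-weights at $P_0$ on $Z$ are precisely those weights at $P_0$ on $M$ that are divisible by $a'$; in particular $a'$ is one of them and $\dim Z\ge 2$. As $\Z_{a'}$ acts trivially on $Z$, the action factors through $S^1/\Z_{a'}\cong S^1$, under which a weight $w$ becomes $w/a'$; so the weight $a'$ at $P_0$ becomes $1$, which is then the smallest positive weight occurring on $Z$. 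Applying Lemma~\ref{alcx} to $Z$ with $w=1$ and $k=0$, the number of index-$2$ fixed points of $Z$ (counted with weight multiplicity) carrying $-1$, i.e.\ $-a'$, equals the number of index-$0$ fixed points carrying $+1$, which is at least the multiplicity of $a'$ at $P_0$ and hence positive. This yields a fixed point $Q=P_j$ of $Z$, of Morse index $2$ in $Z$, carrying the weight $-a'$; since $Q$ lies on the same connected component of $M^{\Z_{a'}}$ as $P_0$, the weight $a'$ is a weight from $P_0$ to $P_j$, and $j\ne 0$.

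The main obstacle is to arrange $j\ne i$. If $P_i\notin Z$ there is nothing to prove, so suppose $P_i\in Z$; then Lemma~\ref{k|} applied to $Z$ gives $a'\mid\big(\phi(P_i)-\phi(P_0)\big)=a$. If $a'\ne a$, then $-a$ and $-a'$ are two distinct negative weights at $P_i$ on $Z$, so $P_i$ has Morse index at least $4$ in $Z$ and cannot be the index-$2$ point $Q$. If $a'=a$, then the normalized weight $+1$ occurs at $P_0$ on $Z$ with multiplicity at least two (one copy from the $P_i$-edge and one from $a'$), so by Lemma~\ref{alcx} the weight $-1$ occurs at least twice, counted with multiplicity, among the index-$2$ fixed points of $Z$; since $P_i$, if it has index $2$ in $Z$, contributes exactly one such copy, there must be another index-$2$ fixed point $P_j\ne P_i$ of $Z$ carrying $-a$ (and if $P_i$ has index $\ne 2$ in $Z$, then $Q\ne P_i$ automatically). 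In every case we obtain $j\ne 0,i$ with $a'$ a weight from $P_0$ to $P_j$. I expect the delicate points to be exactly the comparison of Morse indices on $Z$ against those on $M$ and the multiplicity bookkeeping in the two cases; once one is willing to rescale the circle on the isotropy submanifold, the remaining verifications are routine.
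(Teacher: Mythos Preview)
The paper does not give its own proof of this lemma; it is quoted from \cite{L}, so there is nothing in the present paper to compare against directly. Your argument---pass to the connected component $Z$ of $M^{\Z_{a'}}$ through $P_0$, rescale the circle so that $a'$ becomes the smallest positive weight $1$, and apply the Jang--Tolman counting (Lemma~\ref{alcx}) at index $0$---is the natural one and is essentially correct; it is exactly the mechanism the paper itself invokes repeatedly (e.g.\ in the proof of Lemma~\ref{mult}) when it writes ``by Lemma~\ref{alcx} used on the component''.

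One phrasing issue worth tightening: in the subcase $a'\ne a$ with $P_i\in Z$, you assert that ``$-a$ and $-a'$ are two distinct negative weights at $P_i$ on $Z$''. As written this presupposes that $-a'$ is a weight at $P_i$, which you have not shown in general. The clean version is a contradiction argument: \emph{suppose} $Q=P_i$; then $Q$ has index~$2$ in $Z$, so exactly one negative weight, and that weight is $-a'$ by the choice of $Q$; but $-a$ is a weight at $P_i$ on $Z$ (since $a'\mid a$ from Lemma~\ref{k|}), forcing $a=a'$, contrary to hypothesis. Equivalently, if $P_i$ has index~$2$ in $Z$ then its unique negative weight is $-a$, which rescales to $-a/a'\neq -1$, so $P_i$ is not counted by Lemma~\ref{alcx}. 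With this adjustment the case split ($P_i\notin Z$; $P_i\in Z$ with $a'\ne a$; $a'=a$) is airtight, and your multiplicity bookkeeping in the $a'=a$ case is correct since an index-$2$ point carries at most one copy of $-1$.
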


For a compact Hamiltonian $S^1$-manifold $M$ with minimal isolated fixed points, by standard method in equivariant cohomology by Kirwan (\cite{K}), 
we know that the extended classes of the equivariant Euler classes of the negative normal bundles of the fixed points form a basis of $H^*_{S^1}(M; \R)$  as an $H^*(\CP^{\infty}; \R)$-module. 
Since $H^*(M^{S^1}; \Z)$ has no torsion, the above fact is also true for equivariant cohomology in $\Z$-coefficients, and $H^*(M; \Z) = H^*_{S^1}(M; \Z)/ (t)$ (see Sec.2 of \cite{LT}), hence the restriction of a basis of $H^*_{S^1}(M; \Z)$ to ordinary cohomology is a basis of $H^*(M; \Z)$. We summarize these as follows.

\begin{proposition}\label{equibase}
Let the circle act on a compact $2n$-dimensional symplectic manifold $(M, \omega)$ with moment map $\phi\colon M\to\R$. Assume $M^{S^1}=\{P_0,  P_1, \cdots, P_n\}$. Then
as an $H^*(\CP^{\infty}; \Z)$-module,  $H^*_{S^1}(M; \Z)$ has a basis 
$\big\{\Tilde\alpha_i\in H^{2i}_{S^1}(M; \Z)\,|\, 0\leq i\leq n\big\}$ such that
$$\Tilde\alpha_i|_{P_i} = \Lambda_i^- t^i, \,\,\mbox{and}\,\,\,\, \Tilde\alpha_i|_{P_j} =0,\,\,\forall\,\, j < i,$$
where $\Lambda_i^-$ is the product of the negative weights at $P_i$.
Moreover, $\big\{\alpha_i = r(\Tilde\alpha_i) \in H^{2i} (M; \Z)\,|\, 0\leq i\leq n\big\}$  is a basis for $H^*(M; \Z)$, where
$r\colon H^*_{S^1}(M; \Z)\to H^*(M; \Z)$ is the natural restriction map.
\end{proposition}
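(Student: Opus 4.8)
The plan is to run $S^1$-equivariant Morse theory on the moment map $\phi$, which here takes a particularly clean form: the fixed points are isolated and, by Lemma~\ref{order}, can be labelled so that $P_i$ has Morse index $2i$ and $\phi(P_0)<\phi(P_1)<\cdots<\phi(P_n)$, so each critical level of $\phi$ carries exactly one fixed point. First I would pick regular values $a_{-1}<\phi(P_0)<a_0<\phi(P_1)<a_1<\cdots<\phi(P_n)<a_n$ and set $M_i=\phi^{-1}\big((-\infty,a_i]\big)$, giving an $S^1$-invariant filtration $\emptyset=M_{-1}\subset M_0\subset\cdots\subset M_n=M$ in which $M_i$ is built from $M_{i-1}$ by passing the index-$2i$ critical point $P_i$. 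Equivariantly, $M_i/M_{i-1}$ is the Thom space of the negative normal bundle $\nu_i^-$ of $P_i$, which, since $P_i$ is isolated, is simply the sum of the negative weight spaces of the isotropy representation on $T_{P_i}M$ --- a complex $S^1$-representation of complex rank $\lambda_{P_i}=i$, each nonzero weight space carrying the complex structure (hence orientation) determined by its weight. The equivariant Thom isomorphism then identifies $H^*_{S^1}(M_i,M_{i-1};\Z)\cong H^{*-2i}_{S^1}(\mathrm{pt};\Z)$ as a free $\Z[t]$-module of rank one with generator the equivariant Thom class $\tau_i$, concentrated in even degrees $\geq 2i$.

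The key structural input is that $\phi$ is equivariantly perfect, which in this situation is a pure parity argument rather than an appeal to Kirwan's full machinery: both $H^*_{S^1}(M_i,M_{i-1};\Z)$ and $H^*_{S^1}(\mathrm{pt};\Z)$ vanish in odd degrees, so by induction on $i$ (base case $M_0\simeq\mathrm{pt}$, $\tau_0=1$) the connecting maps $H^{*-1}_{S^1}(M_{i-1})\to H^*_{S^1}(M_i,M_{i-1})$ vanish, $H^{\mathrm{odd}}_{S^1}(M_i;\Z)=0$, and the long exact sequences of the pairs $(M_i,M_{i-1})$ break into short exact sequences $0\to H^*_{S^1}(M_i,M_{i-1};\Z)\to H^*_{S^1}(M_i;\Z)\to H^*_{S^1}(M_{i-1};\Z)\to 0$ of free graded $\Z[t]$-modules; these split (free quotients), so $H^*_{S^1}(M;\Z)$ is a free $\Z[t]$-module with one homogeneous generator in each degree $0,2,\ldots,2n$. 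Since the restriction $H^*_{S^1}(M;\Z)\to H^*_{S^1}(M_i;\Z)$ is onto, I then choose for each $i$ a class $\Tilde\alpha_i\in H^{2i}_{S^1}(M;\Z)$ restricting to the image of $\tau_i$ in $H^*_{S^1}(M_i;\Z)$. Relative to the filtration $L_j:=\ker\big(H^*_{S^1}(M;\Z)\to H^*_{S^1}(M_j;\Z)\big)$ (so $H^*_{S^1}(M;\Z)=L_{-1}\supseteq L_0\supseteq\cdots\supseteq L_n=0$ with $L_{i-1}/L_i\cong H^*_{S^1}(M_i,M_{i-1};\Z)$), the class $\Tilde\alpha_i$ lies in $L_{i-1}$ and projects to the \emph{generator} $\tau_i$ of $L_{i-1}/L_i$; since the transition matrix from $\{\Tilde\alpha_i\}$ to any homogeneous basis adapted to this filtration is then unipotent, $\{\Tilde\alpha_i\}_{i=0}^n$ is a $\Z[t]$-basis of $H^*_{S^1}(M;\Z)$.

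It then remains to verify the two restriction identities. Because $\Tilde\alpha_i$ restricts to $\tau_i\in H^*_{S^1}(M_i,M_{i-1})$, its further restriction to $M_{i-1}$ vanishes, and since $P_j\in M_{i-1}$ whenever $j\leq i-1$, this gives $\Tilde\alpha_i|_{P_j}=0$ for all $j<i$. At $P_i$ itself, the restriction of the Thom class of a bundle to its zero section is the equivariant Euler class, so $\Tilde\alpha_i|_{P_i}=e^{S^1}(\nu_i^-)$; as each weight-$w$ line contributes the factor $wt\in H^2_{S^1}(\mathrm{pt};\Z)$, this equals $\big(\prod_{w<0}w\big)\,t^i=\Lambda_i^-\,t^i$ (the case $i=0$ giving the empty product, i.e.\ $\Tilde\alpha_0=1$), as claimed.

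Finally, Lemma~\ref{order} also gives $H^{\mathrm{odd}}(M;\Z)=0$ with $H^*(M;\Z)$ torsion free, so $M$ is equivariantly formal over $\Z$; hence the restriction map $r\colon H^*_{S^1}(M;\Z)\to H^*(M;\Z)$ is surjective with kernel the ideal $t\cdot H^*_{S^1}(M;\Z)$ (see Sec.~2 of \cite{LT}), and reducing $H^*_{S^1}(M;\Z)=\bigoplus_i\Z[t]\,\Tilde\alpha_i$ modulo $t$ exhibits $\{\alpha_i=r(\Tilde\alpha_i)\in H^{2i}(M;\Z)\}_{i=0}^n$ as a $\Z$-basis of $H^*(M;\Z)$. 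I do not expect a genuine obstacle here: the usual pressure point in equivariant Morse theory --- equivariant perfection, i.e.\ the vanishing of the connecting homomorphisms and the splitting of the exact sequences over $\Z$ (not just over $\Q$) --- is obtained for free from the even-degree concentration forced by the isolated fixed points and the strict ordering of the $\phi(P_i)$, together with the torsion-freeness of $H^*(M^{S^1};\Z)$; this is exactly Kirwan's construction (\cite{K}) in its simplest incarnation, which is why the statement can be recorded as a summary rather than proved from scratch.
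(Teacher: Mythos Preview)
Your proposal is correct and follows essentially the same approach as the paper: the paper does not give a standalone proof but records Proposition~\ref{equibase} as a summary, citing Kirwan's equivariant Morse theory (\cite{K}) for the construction of the $\Tilde\alpha_i$ as extensions of the equivariant Euler classes of the negative normal bundles, invoking torsion-freeness of $H^*(M^{S^1};\Z)$ to pass from $\R$- to $\Z$-coefficients, and citing Sec.~2 of \cite{LT} for $H^*(M;\Z)=H^*_{S^1}(M;\Z)/(t)$. Your write-up is simply an explicit unpacking of that same argument --- the filtration by sublevel sets of $\phi$, the parity-based vanishing of the connecting maps, the splitting over $\Z[t]$, and the identification of $\Tilde\alpha_i|_{P_i}$ with the equivariant Euler class --- so there is no substantive difference in method.
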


A direct corollary of Proposition~\ref{equibase} is:

\begin{corollary}\label{cor}
Let the circle act on a compact $2n$-dimensional symplectic manifold $(M, \omega)$ with moment map $\phi\colon M\to\R$. Assume $M^{S^1}=\{P_0,  P_1, \cdots, P_n\}$. If 
$\Tilde\alpha\in H^{2i}_{S^1}(M; \Z)$ is a class such that $\Tilde\alpha|_{P_j}=0$ for all $j<i$, then
$$\Tilde\alpha =  a_i \Tilde\alpha_i \,\,\,\mbox{for some $a_i\in\Z$}.$$
\end{corollary}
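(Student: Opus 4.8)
The plan is to deduce Corollary~\ref{cor} directly from Proposition~\ref{equibase} by expanding the given class in the module basis and then peeling off coefficients one degree at a time using the restriction maps to the fixed points. Since $\{\Tilde\alpha_0,\Tilde\alpha_1,\dots,\Tilde\alpha_n\}$ is an $H^*(\CP^\infty;\Z)$-module basis for $H^*_{S^1}(M;\Z)$, and $\Tilde\alpha\in H^{2i}_{S^1}(M;\Z)$ is homogeneous of degree $2i$, I would first write
$$\Tilde\alpha = \sum_{j=0}^{i} a_j\, \Tilde\alpha_j,$$
where each coefficient $a_j\in H^{2(i-j)}(\CP^\infty;\Z)$ is forced to be a multiple of $t^{\,i-j}$ by degree reasons, say $a_j = b_j t^{\,i-j}$ with $b_j\in\Z$; the higher-index basis elements $\Tilde\alpha_j$ with $j>i$ cannot contribute because their degree already exceeds $2i$ and the module coefficients live in nonnegative degrees.

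Next I would run an induction on $j$ from $0$ up to $i-1$ to show every $b_j$ (equivalently $a_j$) vanishes. The base case: restrict the identity $\Tilde\alpha = \sum_{j\le i} a_j\Tilde\alpha_j$ to $P_0$. By hypothesis $\Tilde\alpha|_{P_0}=0$ (since $0<i$), and by Proposition~\ref{equibase} we have $\Tilde\alpha_j|_{P_0}=0$ for all $j>0$ while $\Tilde\alpha_0|_{P_0}=\Lambda_0^- = 1$ (the empty product, as $P_0$ has Morse index $0$ and hence no negative weights). Thus $0 = a_0\cdot 1$, forcing $a_0=0$. For the inductive step, assume $a_0=\dots=a_{j-1}=0$ for some $j\le i-1$; restrict the (now shortened) identity $\Tilde\alpha = \sum_{\ell=j}^{i} a_\ell\Tilde\alpha_\ell$ to $P_j$. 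Again $\Tilde\alpha|_{P_j}=0$ because $j<i$, and $\Tilde\alpha_\ell|_{P_j}=0$ for $\ell>j$ by Proposition~\ref{equibase}, so $0 = a_j\,\Tilde\alpha_j|_{P_j} = a_j\,\Lambda_j^- t^j$. Since $\Lambda_j^-\neq 0$ (it is a product of nonzero integers) and $H^*(\CP^\infty;\Z)=\Z[t]$ is an integral domain, this gives $a_j=0$. After the induction finishes we are left with $\Tilde\alpha = a_i\Tilde\alpha_i$, and comparing degrees shows $a_i\in H^0(\CP^\infty;\Z)=\Z$, which is exactly the claim.

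I do not anticipate a genuine obstacle here: the statement is essentially a triangularity argument, and Proposition~\ref{equibase} was phrased precisely so that the restriction matrix of the basis to the fixed points is lower triangular with nonzero diagonal entries $\Lambda_j^- t^j$. The only point requiring a word of care is the degree bookkeeping — making sure that a homogeneous class of degree $2i$ can only involve $\Tilde\alpha_j$ with $j\le i$ and with module coefficient exactly a constant times $t^{i-j}$ — and the use of the fact that $\Z[t]$ has no zero divisors to cancel the nonzero factors $\Lambda_j^-$ and $t^j$. Both are routine. If one wants to avoid invoking $\Lambda_0^-=1$ explicitly, one can instead note that $P_0$ being the minimum of $\phi$ has Morse index $0$ by Lemma~\ref{order}, so it has no negative weights and the empty-product convention applies.
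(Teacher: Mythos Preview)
Your proposal is correct and is essentially the standard triangularity argument that the paper has in mind: the paper states Corollary~\ref{cor} as ``a direct corollary of Proposition~\ref{equibase}'' and gives no further proof, so your expansion-and-restriction argument simply spells out that implication.
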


\section{on the first Chern class $c_1(M)$}

In this section, we give two ways of expressing the first Chern class of the manifold. The first way is using the sums of the weights of two fixed points as in Lemma~\ref{sub}, the second way is using the largest weight between two fixed points as in Lemma~\ref{gammaij}. 

\begin{lemma}\label{sub}\cite{L}
Let the circle act on a connected compact symplectic manifold $(M, \omega)$ with moment map 
$\phi\colon M\to\R$. If $c_1(M)=k[\omega]$, then for any two fixed point set components
$F$ and $F'$,  we have 
$$\Gamma_F - \Gamma_{F'} = k\big(\phi(F')-\phi(F) \big),$$
where $\Gamma_F$ and $\Gamma_{F'}$ are respectively the sums of the weights at $F$ and $F'$.
\end{lemma}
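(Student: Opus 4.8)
The plan is to compare the equivariant first Chern class with the equivariant extension $\ut$ of $[\omega]$ furnished by Lemma~\ref{ut}. Set
$$\eta := c_1^{S^1}(M) - k\,\ut \in H^2_{S^1}(M;\R).$$
Under the restriction (``set $t=0$'') map $r\colon H^*_{S^1}(M;\R)\to H^*(M;\R)$ we have $r\big(c_1^{S^1}(M)\big)=c_1(M)=k[\omega]$ by hypothesis and $r(\ut)=[\omega]$ by Lemma~\ref{ut}, so $r(\eta)=0$.

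The key step is to conclude from $r(\eta)=0$ that $\eta = c\,t$ for a constant $c\in\R$. Because the moment map $\phi$ is a perfect Morse--Bott function, the Hamiltonian $S^1$-action is equivariantly formal: $H^*_{S^1}(M;\R)$ is a free $\R[t]$-module and the kernel of $r$ is exactly $t\cdot H^*_{S^1}(M;\R)$. Hence $\eta = t\,\zeta$ with $\zeta\in H^0_{S^1}(M;\R)$; since $M$ is connected, $H^0_{S^1}(M;\R)=\R$, so $\zeta$ is a constant $c$ and $\eta = c\,t$. (In the isolated-fixed-point setting used later one can instead read this off from Proposition~\ref{equibase}: in degree $2$ a basis of $H^2_{S^1}(M;\Z)$ is $\{t,\Tilde\alpha_1\}$ with $r(\Tilde\alpha_1)\neq 0$ and $r(t)=0$.)

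Finally, restrict $\eta$ to an arbitrary fixed point set component $F$. On the one hand $\eta|_F = c\,t$. On the other hand $\ut|_F = [\omega|_F]-\phi(F)\,t$ by Lemma~\ref{ut}, and, splitting the normal bundle equivariantly into line bundles, the coefficient of $t$ in $c_1^{S^1}(M)|_F$ equals $\Gamma_F$ (the sum of the weights at $F$) while its $t$-free part is $c_1(M)|_F$. Comparing the coefficients of $t$ in $c_1^{S^1}(M)|_F - k\,\ut|_F = c\,t$ gives $\Gamma_F + k\,\phi(F) = c$ for every component $F$. Subtracting the identities for $F$ and $F'$ yields $\Gamma_F - \Gamma_{F'} = k\big(\phi(F')-\phi(F)\big)$, as claimed; note the $t$-free comparison just reproduces $c_1(M)|_F = k[\omega|_F]$, consistent with $r(\eta)=0$.

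The only ingredient beyond routine bookkeeping is the structural fact that a degree-$2$ equivariant class killed by $r$ is a constant multiple of $t$, i.e.\ equivariant formality of the action; I would invoke it from perfectness of the moment map (or from Proposition~\ref{equibase} in the minimal isolated case). Everything else is an immediate restriction-to-$F$ computation, so I do not expect any real obstacle.
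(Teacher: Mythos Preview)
Your argument is correct. Note that the paper does not actually supply its own proof of this lemma: it is quoted from \cite{L} and stated without proof, so there is nothing in the present paper to compare against line by line. That said, your approach is exactly the standard one (and essentially the one in \cite{L}): form the equivariant class $c_1^{S^1}(M)-k\ut$, use equivariant formality of Hamiltonian actions to see that a degree-$2$ class in the kernel of $r$ is a constant multiple of $t$, and then read off $\Gamma_F+k\phi(F)=c$ by restricting to each fixed component. The only point worth stating a bit more carefully is the identity $c_1^{S^1}(M)|_F = c_1(M)|_F + \Gamma_F\,t$: this follows from the equivariant splitting $TM|_F\cong TF\oplus NF$ and the fact that each equivariant line subbundle of $NF$ contributes $c_1(L_i)+w_i t$, so summing gives $\Gamma_F$ as the $t$-coefficient. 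With that clarification your proof is complete.
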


\begin{lemma}\label{gammaij}
Let the circle act on a connected compact symplectic manifold $(M, \omega)$ with moment map $\phi\colon M\to\R$. Assume $M^{S^1}$ consists of isolated points. Let $P, Q\in M^{S^1}$, where $\index (P)=2i$ and $\index (Q) = 2j$ with $i \leq j$ and $\phi(P)<\phi(Q)$. Assume there is a weight $w >0$ from $P$ to $Q$,  $-w$ is not a weight at $P$, $-w$  has multiplicity $s$ at $Q$, and $w$ is the largest among the absolute values of all the weights at $P$ and $Q$. Then $w$ has multiplicity at least $s$ at $P$. If $c_1(M) = k [\omega]$,  then 
$$j-i +s = k\frac{\phi(Q)-\phi(P)}{w}.$$
If $[\omega]$ is an integral class, then $\frac{\phi(Q)-\phi(P)}{w}\in\Z$.
\end{lemma}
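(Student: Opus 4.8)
The plan is to combine the Morse-theoretic structure of the moment map with the ``ABBV''-type localization packaged in Proposition~\ref{equibase}, together with the isotropy-submanifold arguments of Lemmas~\ref{mod} and \ref{alcx}. First I would argue the multiplicity statement. Since $w$ is a weight from $P$ to $Q$, the two points lie on the same connected component $N$ of $M^{\Z_w}$; by Lemma~\ref{mod} the $S^1$-weights on the normal bundle of $N$ at $P$ and at $Q$ agree modulo $w$, and the weights tangent to $N$ are divisible by... no — rather the weights \emph{not} divisible by $w$ are normal to $N$ and must match mod $w$ at the two points. Because $w$ is the largest absolute value among all weights at $P$ and $Q$, any weight congruent to $0$ mod $w$ must be exactly $0$, $w$, or $-w$. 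At $Q$ the weight $-w$ occurs with multiplicity $s$ (and $+w$ does not occur beyond what is tangent to $N$), while at $P$ the weight $-w$ does not occur; matching the ``$\equiv 0 \bmod w$'' weights normal to $N$ at $P$ versus $Q$, together with $\dim N$ being the same seen from both points, forces the weight $+w$ to occur at $P$ with multiplicity at least $s$. I would write this count carefully, separating the tangent-to-$N$ directions (which contribute the single $\pm w$ joining $P$ to $Q$) from the normal directions.

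Next I would establish the Chern-class identity. The key input is Lemma~\ref{sub}: if $c_1(M)=k[\omega]$ then $\Gamma_P-\Gamma_Q = k(\phi(Q)-\phi(P))$, where $\Gamma_P,\Gamma_Q$ are the sums of weights at $P$ and $Q$. So it suffices to show $\Gamma_P - \Gamma_Q = w(j-i+s)$. For this I would compare the weights at $P$ and at $Q$ directly. By Lemma~\ref{order} (applicable since... actually here we only assume isolated fixed points, not minimal, so I would instead reason via $\index(P)=2i$, $\index(Q)=2j$): write $\Gamma_P = (\text{positive weights at }P) - (\text{abs.\ values of negative weights at }P)$, and similarly for $Q$. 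The negative weights at $P$ number $i$ and at $Q$ number $j$. I would pair up the remaining weights at $P$ and at $Q$ after accounting for the $\pm w$ between them: the heuristic from the coadjoint-orbit example is that between each pair of fixed points there is exactly one weight, and crossing $P\to Q$ flips the sign of the weight $w$ and otherwise the weight data is governed by $c_1$. Concretely, I expect the cleanest route is to apply Lemma~\ref{sub} to the intermediate structure: the difference $\Gamma_P-\Gamma_Q$ is controlled, and independently one computes the net change in the weight multiset from $P$ to $Q$. The $s$ copies of $+w$ at $P$ becoming (at least) $s$ copies of $-w$ at $Q$ contributes $2ws$ to... I need to be careful: it is $\Gamma_P$ that has the $+w$'s. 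Passing from $Q$ to $P$ the sum increases by $2ws$ from these flips; the index change $j-i$ accounts for additional sign changes in the ``small'' weights, each contributing $w$ when divided out. Dividing the relation $\Gamma_P-\Gamma_Q=k(\phi(Q)-\phi(P))$ by $w$ and matching gives $j-i+s = k\frac{\phi(Q)-\phi(P)}{w}$.

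For the final assertion that $\frac{\phi(Q)-\phi(P)}{w}\in\Z$ when $[\omega]$ is integral: since $P$ and $Q$ lie on the same connected component of $M^{\Z_w}$, Lemma~\ref{k|} gives $w \mid \big(\phi(Q)-\phi(P)\big)$ directly, which is exactly the claim. (This part is immediate and I would dispatch it in one line.)

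The main obstacle I anticipate is the bookkeeping in the multiplicity/weight-sum comparison: one must show not merely that the two sides of $j-i+s = k\frac{\phi(Q)-\phi(P)}{w}$ are equal but must correctly track which weights at $P$ versus $Q$ are forced to be $\pm w$, which are forced to vanish, and which are ``free,'' using only the hypotheses that $w$ is maximal in absolute value at both points, that $-w$ is absent at $P$, and that $-w$ has multiplicity $s$ at $Q$. The delicate point is ruling out that $+w$ occurs at $P$ with multiplicity strictly greater than $s$ in a way that would break the identity — here the ``$w$ is largest'' hypothesis and the mod-$w$ matching of normal weights (Lemma~\ref{mod}) must be used to pin the multiplicity of $+w$ at $P$ to exactly $s$ plus whatever is tangent to $N$, and the tangent contribution is the single shared edge. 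I would structure the proof so that the multiplicity claim ``$w$ has multiplicity at least $s$ at $P$'' is proved first and cleanly, then note that for the Chern-class identity one in fact gets equality of the relevant count, and only then invoke Lemma~\ref{sub}.
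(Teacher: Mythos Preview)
Your overall strategy matches the paper's: use Lemma~\ref{mod} together with the maximality of $w$ to pair the weights at $P$ and $Q$, deduce $\Gamma_P-\Gamma_Q=(j-i+s)w$, then invoke Lemma~\ref{sub}; the integrality claim is exactly Lemma~\ref{k|}. But two aspects of your write-up need correction.

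First, you never actually carry out the computation $\Gamma_P-\Gamma_Q=(j-i+s)w$; you only gesture at it (``heuristic from the coadjoint-orbit example'', ``index change accounts for sign changes''). This is the entire content of the lemma beyond Lemmas~\ref{sub} and \ref{k|}, so it cannot be left as a heuristic. The paper does it by writing out the full bijection: since every weight at $P$ and at $Q$ lies in $[-w,w]$ and the two multisets agree modulo $w$ (Lemma~\ref{mod} applied to the full weight sets, not just the normal directions), each weight $a_r$ at $P$ equals a weight $b_{\sigma(r)}$ at $Q$ plus an element of $\{-w,0,w,2w\}$. The value $-2w$ is excluded because $-w\notin W_P$; the value $2w$ occurs exactly for the $s$ pairs $(w,-w)$. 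For the remaining pairs, the difference is $-w$ exactly when $a_r<0<b_{\sigma(r)}$, and $+w$ exactly when $a_r>0>b_{\sigma(r)}$. Counting negative weights on each side gives $(j-i+s)w$ at once. You should write this count explicitly.

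Second, your ``delicate point'' is a non-issue. You worry that $w$ might occur at $P$ with multiplicity strictly greater than $s$ and that this would break the identity. It does not: any extra copies of $+w$ at $P$ simply pair with copies of $+w$ at $Q$ (difference $0$) and contribute nothing to $\Gamma_P-\Gamma_Q$. The lemma only asserts multiplicity \emph{at least} $s$, and the Chern identity holds regardless of the exact multiplicity. Relatedly, your tangent/normal splitting of $N=M^{\Z_w}$ is unnecessary: Lemma~\ref{mod} as stated in the paper compares the full $S^1$-weight multisets at $P$ and $Q$, and that is all you need.
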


\begin{proof}
Let 
$$W_P^- = \big\{a_1, \cdots, a_i\big\}\,\,\,\mbox{and}\,\,\,  W_P^+ = \big\{a_{i+1}, \cdots, a_n\big\}$$
 be respectively the set of negative weights and positive weights at $P$, and  
$$W_Q^- = \big\{b_1, \cdots, b_j\big\}\,\,\,\mbox{and}\,\,\, W_Q^+ = \big\{b_{j+1}, \cdots, b_n\big\}$$
 be respectively the set of negative weights and positive weights at $Q$.
By Lemma~\ref{mod}, 
\begin{equation}\label{modw} 
W_P^-\cup W_P^+ = W_Q^- \cup W_Q^+  \mod w.
\end{equation}
Since there is no weight $-w$ at $P$, for (\ref{modw}) to hold, for each $-w$ at $Q$, there should be a $w$ at $P$ such that $w=-w \mod w$. So $w$ has at least multiplicity $s$ at $P$.
Assume
$$a_{n-s+1} = w, \cdots, a_n = w \in W_P^+, \,\,\,\mbox{and}\,\,\, b_{j-s+1}= -w, \cdots,  b_j = -w\in W_Q^-.$$
The rest of the proof is similar to that of \cite[Lemma 4.2]{L}.
Since $w$ is the largest among the absolute values of all the weights at $P$ and $Q$,  
up to a reordering of indices,  (\ref{modw}) can only yield the following relations:
$$a_1 = b_1, \,\,\,\cdots,\,\,\, a_m = b_m \,\,\,\mbox{for some $m$ with $0\leq m \leq i$},$$
$$a_{i+1} = b_{j+1},\,\,\, \cdots,\,\,\, a_{i+l} = b_{j+l} \,\,\,\mbox{for some $l$
with $0\leq l\leq n-j$},$$
$$a_{m+1}  = b_{j +l+1} - w, \,\,\,\cdots, \,\,\,  a_i = b_n-w,$$
$$a_{i+l+1}  = b_{m+1} + w,\,\,\, \cdots, \,\,\, a_{n-s}  = b_{j-s} + w, \,\,\,\mbox{and}$$
$$ a_{n-s+1} = b_{j-s+1} + 2 w, \,\,\, \cdots, \,\,\, a_n = b_j + 2w.$$
 Let $\Gamma_P$ and $\Gamma_Q$ be respectively the sums of weights at $P$ and at $Q$. Then
 $$\Gamma_P - \Gamma_Q = \sum_{r=1}^n a_r - \sum_{r=1}^n b_r = (j-i +s)w.$$
Combining Lemma~\ref{sub}, we obtain the first claim of the lemma.

The last claim is by  Lemma~\ref{k|}.
\end{proof}

The next lemma gives a criterion of using Lemma~\ref{gammaij} for the largest weight on $M$. The lemma is stated here according to Lemma 4.4 and its proof in \cite{L}.

\begin{lemma}\label{large}\cite{L}
Let the circle act on a compact symplectic manifold $(M, \omega)$ with moment map $\phi\colon M\to\R$. Assume $M^{S^1}$ consists of isolated points.  Let $w>0$ be the largest weight on $M$. Take a lower fixed point $P$ (relative to the value of $\phi$) so that $-w$ is not a weight at $P$, and take a closest fixed point $Q$ to $P$ with $\phi(P) < \phi(Q)$ such that there is a weight $w$ from $P$ to $Q$, then $-w$ has multiplicity $1$ at $Q$.
\end{lemma}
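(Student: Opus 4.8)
The plan is to leverage the structure already built into Lemma~\ref{gammaij} and Lemma~\ref{mod}, and to use the hypothesis that $w$ is the \emph{largest} weight on $M$ to force the multiplicity to be exactly $1$. First I would set up the situation precisely: let $P$ be a lower fixed point at which $-w$ is not a weight (such a $P$ exists since $W^- = -W^+$, so $w\in W^+$ occurs at some fixed point, and one can descend along the $\Z_w$-isotropy submanifold to reach a fixed point that is minimal in its component of $M^{\Z_w}$, where no negative weight $-w$ can occur). Then take $Q$ to be a closest fixed point above $P$ (in $\phi$-value) admitting a weight $w$ from $P$ to $Q$; by Lemma~\ref{<} applied to the positive weight $w$ at $P$, such a $Q$ exists, and by Lemma~\ref{mod} (applied on the component of $M^{\Z_w}$ through $P$ and $Q$) the $S^1$-weights at $P$ and $Q$ agree modulo $w$.

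Next I would argue by contradiction: suppose $-w$ occurs at $Q$ with multiplicity $s\geq 2$. Running the congruence bookkeeping exactly as in the proof of Lemma~\ref{gammaij}, each copy of $-w$ at $Q$ must be matched (modulo $w$) by a copy of $w$ at $P$, since $-w$ is not a weight at $P$ and $w$ is maximal among absolute values of all weights at $P$ and $Q$ — there is no room for a weight of absolute value $2w$ or more. Hence $w$ occurs at $P$ with multiplicity at least $s\geq 2$. Now the key step: between $P$ and $Q$, the edge (i.e.\ the weight $w$ from $P$ to $Q$) accounts for one copy of $w$ at $P$ paired with one copy of $-w$ at $Q$; I want to produce a \emph{second} independent weight $w$ from $P$ going up and a \emph{second} independent weight $w$ into $Q$ coming from below, then use the ``closest'' minimality of $Q$ together with the maximality of $w$ to derive a contradiction. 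Concretely, since $w$ has multiplicity $\geq 2$ at $P$, by Lemma~\ref{other} (with $a = a' = w$, the weight from $P$ to some fixed point, here interpreted upward) there is a second fixed point $Q'\neq Q$ above $P$ with a weight $w$ from $P$ to $Q'$; likewise the second copy of $-w$ at $Q$ forces, by the downward analogue, a fixed point below $Q$ other than $P$ joined to $Q$ by a weight $w$. One then locates the relevant closest such fixed point and contradicts the choice of $Q$ as the closest fixed point above $P$ carrying a weight $w$ from $P$, or contradicts the maximality of $w$ on $M$ via the index inequality of Lemma~\ref{ind} / the sum-of-weights relation of Lemma~\ref{sub}.

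I expect the main obstacle to be pinning down exactly which minimality to invoke and ensuring the ``second edge'' produced by Lemma~\ref{other} genuinely lies strictly between $P$ and $Q$ in $\phi$-value (rather than above $Q$ or below $P$), so that it collides with the choice of $Q$ as \emph{closest}. This is precisely the point where one must combine the $\Z_w$-isotropy structure (Lemma~\ref{mod}, so that all the relevant fixed points lie on one connected component of $M^{\Z_w}$, whose $\phi$-ordering is totally controlled) with the fact that on that component the restricted action has $w$ as its \emph{only} weight up to sign, making $M^{\Z_w}$ behave like a $2$-sphere chain; the multiplicity of $-w$ at the next fixed point along such a chain is forced to be $1$. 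I would therefore phrase the final contradiction in terms of counting, along the component of $M^{\Z_w}$ through $P$, the copies of $w$ leaving $P$ upward versus the copies of $-w$ arriving at the immediate successor, invoking Lemma~\ref{alcx} or the direct $\Z_w$-sphere argument if the elementary bookkeeping needs reinforcement. Modulo this localization step, everything else is the routine congruence-matching already carried out in Lemma~\ref{gammaij}.
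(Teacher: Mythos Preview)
The paper does not prove this lemma here; it is quoted from \cite{L}, so there is no in-paper argument to compare against. Evaluating your proposal on its own merits: the congruence bookkeeping from Lemma~\ref{gammaij} is correct but superfluous, and your appeal to Lemma~\ref{other} is not justified---that lemma is stated only for the extremal fixed points $P_0,P_n$ of a manifold with exactly $n+1$ fixed points and requires the weight to equal $\phi(P_i)-\phi(P_0)$ exactly, hypotheses that do not hold for a general $P$.

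The correct argument is the one you mention at the end but treat as a fallback; it is in fact the entire proof and should be promoted to the main line. Let $C$ be the connected component of $M^{\Z_w}$ containing $P$; by the definition of ``weight $w$ from $P$ to $Q$'' we have $Q\in C$. Every tangential weight on $C$ is divisible by $w$, and since $w$ is the largest weight on $M$, each such weight is exactly $\pm w$. Because $-w$ is not a weight at $P$, $P$ is the unique minimum of $\phi|_C$. Any other fixed point $R\in C$ then has a negative tangential weight in $C$, hence carries $-w$, hence there is a weight $w$ from $P$ to $R$; thus ``closest $Q$ above $P$ with a weight $w$ from $P$ to $Q$'' just says $Q$ is the second-lowest fixed point in $C$. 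Now apply Lemma~\ref{ind} to the compact Hamiltonian $S^1$-manifold $C$: the Morse index of $Q$ in $C$ is at most $2$, so $-w$ has multiplicity $1$ among the tangential weights of $C$ at $Q$. Since every occurrence of $-w$ at $Q$ in $M$ is tangential to $C$ (as $w\mid(-w)$), $-w$ has multiplicity exactly $1$ at $Q$.

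In short: drop the detour through Lemma~\ref{other} and the contradiction set-up; the isotropy-component observation you already made, combined with Lemma~\ref{ind} on $C$, is a complete two-line proof.
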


\begin{remark}
For the largest weight $w$ on $M$, and the choice of $P$ and $Q$ in Lemma~\ref{large},
the $s$ in Lemma~\ref{gammaij} is $1$.
In Lemma~\ref{gammaij}, if $s > 1$, then between 
$P$ and $Q$, there must be other fixed points; otherwise, the connected component of 
$M^{\Z_w}$ containing $P$ has $P$ as the minimum, and $Q$ as the next fixed point
with index bigger than 2, contradicting to Lemma~\ref{ind} applied on this component.
\end{remark}

\begin{corollary}\label{1<n+1}
Let $(M, \omega)$ be a compact Hamiltonian $S^1$-manifold $(M, \omega)$. Assume
$M^{S^1}$ consists of isolated points, $H^2(M; \Z)=\Z$ and $[\omega]$ is a primitive integral class. Then
$c_1(M) = k[\omega]$ with $k\in\N$ and $1\leq k\leq n+1$.
\end{corollary}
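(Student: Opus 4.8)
\textbf{Proof plan for Corollary~\ref{1<n+1}.}
The plan is to combine Lemma~\ref{large} (which hands us a concrete pair of fixed points to which Lemma~\ref{gammaij} applies with $s=1$) with the structural fact from Lemma~\ref{order} that the fixed points can be indexed $P_0,\dots,P_n$ with $\index(P_i)=2i$. First I would write $c_1(M)=k[\omega]$, which is legitimate because $H^2(M;\Z)=\Z$ and $[\omega]$ is a primitive integral class, so that $c_1(M)$ is an integer multiple of $[\omega]$; the task is to pin down the sign of $k$ and the bound $k\le n+1$. To see $k\ge 1$, I would invoke Lemma~\ref{10}: there is a weight $w=\phi(P_1)-\phi(P_0)>0$ from $P_0$ to $P_1$, and applying Lemma~\ref{sub} to the pair $P_0,P_1$ together with the index count gives a relation forcing $k>0$, hence $k\in\N$; alternatively one reads this off directly from $\Gamma_{P_0}>0$ and $\phi(P_1)>\phi(P_0)$ in Lemma~\ref{sub}.

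For the upper bound, let $w>0$ be the largest weight on $M$. By Lemma~\ref{large} I may choose a fixed point $P$ with $-w$ not a weight at $P$ and a closest higher fixed point $Q$ (with $\phi(P)<\phi(Q)$) such that there is a weight $w$ from $P$ to $Q$, and then $-w$ has multiplicity exactly $s=1$ at $Q$. Now Lemma~\ref{gammaij} applies: writing $\index(P)=2i$ and $\index(Q)=2j$ with (as we must check) $i\le j$, we get
$$k\,\frac{\phi(Q)-\phi(P)}{w}=j-i+1.$$
Since $\phi(Q)>\phi(P)$ and $\tfrac{\phi(Q)-\phi(P)}{w}$ is a positive integer (the last clause of Lemma~\ref{gammaij}, using that $[\omega]$ is integral), the left side is at least $k$, so $k\le j-i+1\le n-0+1=n+1$, using $0\le i$ and $j\le n$ from the labelling in Lemma~\ref{order}. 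This gives $1\le k\le n+1$.

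The one point needing care — and the main obstacle — is verifying the hypothesis $i\le j$ of Lemma~\ref{gammaij} for the pair $(P,Q)$ produced by Lemma~\ref{large}, i.e.\ that the lower fixed point $P$ does not have larger Morse index than the higher one $Q$. This should follow from Lemma~\ref{ind}: the index $2\lambda_P$ is bounded by twice the number of fixed points below $P$, and since $Q$ is essentially the next fixed point above $P$ along the component of $M^{\Z_w}$ (the Remark after Lemma~\ref{large} notes that with $s=1$ there need be no fixed point strictly between them on that component, while $\phi(P)<\phi(Q)$ forces $Q$ to sit no lower than $P$ in the global order), the count of fixed points below $Q$ is at least that below $P$, giving $i\le j$ after using the index-labelling. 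If one wants to avoid this subtlety entirely, an alternative is to apply Lemma~\ref{gammaij} in the symmetric form to $P_0$ and $P_n$ directly: the largest weight realized between $P_0$ and some $P_\ell$ via Lemma~\ref{other} and Lemma~\ref{10} gives $k\,\frac{\phi(P_\ell)-\phi(P_0)}{w}=\ell+s\le n+1$ with the positive-integer factor at least $1$, yielding the bound with $i=0$ so that $i\le j$ is automatic.
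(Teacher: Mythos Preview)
Your approach is essentially the paper's: write $c_1(M)=k[\omega]$ and feed the pair $(P,Q)$ produced by Lemma~\ref{large} into Lemma~\ref{gammaij} with $s=1$ to get $km=j-i+1$ with $m=\tfrac{\phi(Q)-\phi(P)}{w}\in\N$, hence $k\le j-i+1\le n+1$; the paper's proof is just the one-line pointer to those two lemmas.

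Two issues with how you fill in the details. First, the corollary does \emph{not} assume minimal fixed points, only isolated fixed points and $H^2(M;\Z)=\Z$; Lemma~\ref{10} and Lemma~\ref{order} both require exactly $n+1$ fixed points, so you cannot invoke them here. The lower bound $k\ge 1$ is better obtained directly from Lemma~\ref{sub} applied to the global minimum and maximum of $\phi$: there $\Gamma_{\min}>0>\Gamma_{\max}$ and $\phi(\max)-\phi(\min)>0$, so $k>0$. Second, your Lemma~\ref{ind} argument for $i\le j$ does not work: Lemma~\ref{ind} gives $\lambda_P\le (\text{number of fixed points below }P)$ and $\lambda_Q\le(\text{number below }Q)$, and knowing the second count is larger does not compare $\lambda_P$ to $\lambda_Q$. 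Your alternative via $P_0$ is also off, since the largest weight $w$ need not occur at the global minimum.

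In fact the $i\le j$ hypothesis is not a genuine obstacle. If you are in the minimal fixed-point setting (where you were tacitly working), Lemma~\ref{order} orders the $P_i$ simultaneously by $\phi$-value and by index, so $\phi(P)<\phi(Q)$ already forces $i<j$. In the general setting, the computation inside the proof of Lemma~\ref{gammaij}---the mod-$w$ matching of weights leading to $\Gamma_P-\Gamma_Q=(j-i+s)w$---does not use $i\le j$, so the identity $km=j-i+1$ holds regardless; since you have already shown $k\ge 1$ and $m\ge 1$, this gives $j-i+1\ge 1$ a posteriori and then $k\le j-i+1\le n+1$.
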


\begin{proof}
Since $H^2(M; \Z)=\Z$ and $[\omega]$ is primitive integral, $c_1(M) = k[\omega]$ with $k\in\Z$. 
That $1\leq k\leq n+1$ follows from Lemmas~\ref{large} and \ref{gammaij}.
\end{proof}

\begin{example}\label{c1O}
Consider the coadjoint orbit  $\mathcal O$ of $G_2$ in Example~\ref{o}. 
Since $H^2(\mathcal O; \Z)=\Z$, we have $c_1(\mathcal O)=k[\omega]$ for some $k\in\Z$
if the symplectic class $[\omega]$ on $\mathcal O$ is primitive integral.
By 
Lemma~\ref{sub}, using the fixed points $P_0$ and $P_1$, we get
$$c_1(\mathcal O) = \frac{\Gamma_0-\Gamma_1}{\phi(P_1)-\phi(P_0)}[\omega] = 3[\omega].$$
By Lemma~\ref{gammaij}, using the fixed points $P_0$ and $P_5$, or $P_1$ and $P_3$, or $P_2$ and $P_4$, we also get
$$c_1(\mathcal O) = 3[\omega].$$
\end{example}

\begin{example}\label{c1CP5}
Consider $\CP^5$ in Example~\ref{CP5}. 
We have $H^2(\CP^5; \Z)=\Z$, so $c_1(\CP^5)=k[\omega]$ for a primitive integral symplectic class $[\omega]$ on $\CP^5$, with $k\in\Z$. The largest weight on $\CP^5$ is from $P_0$ to $P_5$ and
is equal to $\phi(P_5)-\phi(P_0)$, this number is at least 5. By Lemma~\ref{sub} or Lemma~\ref{gammaij}, we have
$$c_1(\CP^5) = 6[\omega].$$
\end{example}

\begin{example}\label{c1grass}
Consider $\Gt_2(\R^7)$ in Example~\ref{grass}.
We have $H^2(\Gt_2(\R^7); \Z)=\Z$, so $c_1(\Gt_2(\R^7))=k[\omega]$ for a primitive integral symplectic class $[\omega]$ on $\Gt_2(\R^7)$ with $k\in\Z$.
The largest weight on $\Gt_2(\R^7)$ is from $P_0$ to $P_4$ and is equal to $\phi(P_4)-\phi(P_0)$,
is also from $P_1$ to $P_5$ and is equal to $\phi(P_5)-\phi(P_1)$, this number is at least 5. By  Lemma~\ref{sub} or Lemma~\ref{gammaij}, we have
$$c_1(\Gt_2(\R^7)) = 5[\omega].$$
\end{example}

\section{on $c_1(M)$ and on the largest weight --- proof of Theorems~\ref{thm1} and \ref{thm2}}
In this section, we study the relation between $c_1(M)$ and the places where the largest weight occurs, and we prove Theorems~\ref{thm1} and \ref{thm2}.

\medskip

We start with Lemma~\ref{mult} on the multiplicities of the weights at $P_0$ and at $P_5$.

\begin{lemma}\label{mult}
Let  $(M, \omega)$ be an effective compact $10$-dimensional  Hamiltonian $S^1$-manifold with $M^{S^1}=\{P_0,  P_1, \cdots, P_5\}$. Then we have the following claims.
\begin{enumerate}
\item  Any weight at $P_0$ has maximal multiplicity $2$, and $1$ has maximal multiplicity $1$ at $P_0$. We have a similar claim for the weights at $P_5$. 
\item If $a$ and $b$ are both weights at $P_0$, $a > 1$, $a|b$, and if
$a$ or $b$ has multiplicity $2$, then the connected component $C$ of $M^{\Z_a}$ containing $P_0$ contains all the 6 fixed points, $\dim(C)=6$ when $a$ has multiplicity 2;  $1$  is not a weight at $P_0$, and $-1$ is not a weight at $P_5$.
We have a similar claim for the weights at $P_5$.
The triples $\pm (a, a, b)$ and $\pm (a, b, b)$ (with proper signs) cannot occur at the same time at the two extremal fixed points $P_0$ and $P_5$ .

\item  If $a$ is a weight from $P_0$ to $P_i$ for $i\geq 3$, and it is not a weight from $P_0$ to $P_i$ for $i\leq 2$, and the weights at $P_0$ and $P_5$ have symmetry, then $a$ has multiplicity $1$ at $P_0$. We have
a similar claim for the weights at $P_5$. 
\end{enumerate}
\end{lemma}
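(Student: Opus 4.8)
The plan is to exploit three tools throughout: the isotropy‑weight congruence (Lemma~\ref{mod}), the Morse‑index bound on connected components of $M^{\Z_k}$ (Lemma~\ref{ind}), and the fact that on $M$ there are only six fixed points with strictly increasing moment‑map values $\phi(P_0)<\dots<\phi(P_5)$ (Lemma~\ref{order}), together with the no‑negative‑weight property at $P_0$ and no‑positive‑weight property at $P_5$. I would treat $P_0$ and $P_5$ symmetrically, giving all arguments for $P_0$; the statements for $P_5$ follow by reversing the $S^1$‑action (equivalently replacing $\phi$ by $-\phi$).

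For (1): suppose a weight $w>0$ at $P_0$ has multiplicity $\geq 3$. Then $C$, the connected component of $M^{\Z_w}$ through $P_0$, has dimension $\geq 6$; since $P_0$ is its minimum (it has no negative $S^1$‑weights, so its index in $C$ is $0$), $C$ must contain at least $\dim C/2+1\geq 4$ fixed points, and running Lemma~\ref{ind} \emph{inside} $C$ forces the fixed points of $C$ to have $C$‑indices $0,2,4,\dots$; but by effectiveness $\gcd$ of the $S^1$‑weights on $C$ is controlled, and counting the remaining weights at the larger fixed points of $C$ together with $W^-=-W^+$ on $M$ overflows the available six fixed points — I would push this to a contradiction by a weight‑count. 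For the sub‑claim that $1$ occurs with multiplicity $\leq 1$ at $P_0$: if $1$ had multiplicity $2$, apply Lemma~\ref{alcx} with $w=1$ (the smallest positive weight on an effective $M$ is $1$): the weight $+1$ at the index‑$0$ fixed point $P_0$ must be matched by the weight $-1$ at an index‑$2$ fixed point, i.e.\ at $P_1$; multiplicity $2$ of $+1$ at $P_0$ would demand the weight $-1$ at $P_1$ with multiplicity $2$, but then $+1$ has multiplicity $\geq 2$ at $P_1$ as well (by $W^-=-W^+$ locally is false; instead use Lemma~\ref{mod} along $M^{\Z_1}=M$ — trivial — so argue directly), and iterating along the chain exhausts the six fixed points, contradiction. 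I expect (1) to be the cleanest part.

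For (2): assume $a,b$ are weights at $P_0$ with $a>1$, $a\mid b$, and one of them has multiplicity $2$. Let $C$ be the component of $M^{\Z_a}$ through $P_0$. Both $a$ and $b$ are $S^1$‑weights on the normal‑to‑fixed part inside $C$ (since $a\mid b$, $b$ is a weight \emph{tangent} to $M^{\Z_a}$), so $\dim C\geq 4$, and if $a$ has multiplicity $2$ then $\dim C\geq 6$. As in (1), $P_0$ is the minimum of $C$, Lemma~\ref{ind} inside $C$ forces a full index sequence $0,2,4,\dots$, and since $b/a$ and the other tangent directions must also be accommodated, $C$ is forced to contain all six fixed points (and $\dim C=6$ exactly when $a$ has multiplicity $2$, by a dimension count). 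Once $C=M$ as a $\Z_a$‑component with $a>1$, Lemma~\ref{mod} says every weight at every fixed point is $\equiv$ the corresponding weight at $P_0$ mod $a$; in particular $1$ cannot appear at $P_0$ (it is not $\equiv$ any weight that survives, since $a\nmid 1$ would break the tangency pattern) and symmetrically $-1$ cannot appear at $P_5$. The ``cannot occur at the same time'' clause: if $\pm(a,a,b)$ were the multiset of the repeated‑pattern at $P_0$ and $\pm(a,b,b)$ at $P_5$ (or vice versa), then Lemma~\ref{sub} with $c_1(M)=k[\omega]$ gives $\Gamma_{P_0}-\Gamma_{P_5}=k(\phi(P_5)-\phi(P_0))$ while the weight multisets are incompatible modulo $a$ simultaneously with the forced $C=M$ structure from both ends — I would derive the contradiction by comparing $\Gamma_{P_0}$ and $\Gamma_{P_5}$ against the congruence constraints. \textbf{This is the step I expect to be the main obstacle}, because one must simultaneously control the geometry of $M^{\Z_a}$ from \emph{both} extremal fixed points and rule out the symmetric pair of triples without knowing $k$ in advance.

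For (3): suppose $a$ is a weight from $P_0$ to $P_i$ with $i\geq 3$ and is \emph{not} a weight from $P_0$ to $P_j$ for any $j\leq 2$, and assume the weight sets at $P_0$ and $P_5$ are related by $W^-_{P_5}=-W^+_{P_0}$‑type symmetry (the hypothesis ``have symmetry''). If $a$ had multiplicity $2$ at $P_0$, then by Lemma~\ref{other} the second copy of $a$ is a weight from $P_0$ to some $P_j$ with $j\neq 0,i$; but ``$a$ is a weight from $P_0$ to $P_i$'' means $P_0$ and $P_i$ lie on a common component of $M^{\Z_a}$, and since $P_0$ is the minimum of that component, Lemma~\ref{ind} applied inside it forces all fixed points below $P_i$ on that component — in particular if $j<i$ then $P_j$ with $j\leq 2$ would carry $-a$, contradicting the hypothesis that $a$ is not a weight from $P_0$ to any $P_j$ with $j\leq 2$; and if $j>i$ then the component has $P_0$ as minimum with two fixed points of index $>2$ above it with $-a$ appearing, again contradicting Lemma~\ref{ind} on the component (its index‑$2$ slot is unfilled). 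Either way we get a contradiction, so $a$ has multiplicity $1$ at $P_0$; the symmetry hypothesis then transports the statement to $P_5$. $\hfill\square$
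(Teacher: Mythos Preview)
Your proposal has the right architecture, but parts (2) and (3) contain genuine gaps. In (2), your argument that $1$ is not a weight at $P_0$ misuses Lemma~\ref{mod}: that lemma only compares weights at \emph{different} fixed points on a $\Z_a$-component modulo $a$; it does not forbid $1$ from appearing at $P_0$ (weights normal to $C$ need not be divisible by $a$). The paper's mechanism is different: once $\dim C=6$ and $C$ contains all six fixed points, Lemma~\ref{ind} forces $P_1$ to have $C$-index $2$, so $P_1$'s unique negative weight in $M$ is tangent to $C$, hence a multiple of $a>1$; then Lemma~\ref{alcx} rules out $1$ at $P_0$. For the ``cannot occur simultaneously'' clause you propose a $\Gamma_{P_0}-\Gamma_{P_5}$ comparison, but this does not work without knowing $k$; the paper instead tracks the $\Z_b$-component through $P_5$ and shows it cannot collect enough fixed points given the index-$2$ structure already forced on $C$. (In (1) you also overcomplicate the multiplicity bound for the weight $1$: once Lemma~\ref{alcx} forces $-1$ with multiplicity $2$ at the unique index-$2$ point $P_1$, you are done immediately, since $P_1$ has exactly one negative weight.)

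In (3) your argument misapplies Lemma~\ref{other}, whose hypothesis requires $a=\phi(P_i)-\phi(P_0)$, not assumed here; and the claim that Lemma~\ref{ind} ``forces all fixed points below $P_i$ onto the component'' is not what that lemma says. More importantly, you use the symmetry hypothesis only to transport the conclusion to $P_5$, whereas in the paper symmetry is the crux of the contradiction at $P_0$ itself: if $a$ has multiplicity $2$, Lemma~\ref{alcx} on $C$ gives two index-$2$ fixed points carrying $-a$, and since $a$ is not from $P_0$ to any $P_j$ with $j\le 2$, these together with the maximum force $C\supseteq\{P_0,P_3,P_4,P_5\}$; \emph{now} symmetry at $P_5\in C$ forces $C$ to contain $P_1,P_2$ as well, so $\dim C\ge 6$, and then $P_3$ and $P_4$ cannot both have $C$-index $2$---contradiction. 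Your $j<i$ versus $j>i$ split does not reach this.
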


\begin{proof}
(1) If $a=1$ is a weight at $P_0$, by Lemma~\ref{alcx}, it has multiplicity $1$ since there is only one index $2$ fixed point $P_1$ in $M$. Assume $a > 1$ is a weight at $P_0$, and it has multiplicity at least $3$. Then the connected component of the $M^{\Z_a}$ containing $P_0$ has dimension at least $6$, and has at least 3 index $2$ fixed points by Lemma~\ref{alcx} applied on the component, by Morse theory and Poincar\'e duality, this component of $M^{\Z_a}$ has at least 3 coindex $2$ fixed points, and $1$ top index fixed point. This contradicts to the total number of 6 fixed points in $M$, so $a$ has maximal multiplicity $2$ at $P_0$. Similar arguments show that 
a weight at $P_0$ with multiplicity 2 is possible (in dimension 4, index 2 and coindex 2 fixed points can be the same).

(2) First assume $a$
has multiplicity $2$ at $P_0$. Then the connected component $C$ of  $M^{\Z_a}$ containing $P_0$ contains the weights $(a, a, b)$, so it has dimension at least $6$; it has at least 2 index $2$ fixed points by Lemma~\ref{alcx} used on $C$; similarly, it also has at least 2 coindex $2$ fixed points by 
Poincar\'e duality, and $1$ top index fixed point, so $C$ contains all the 6 fixed points. By Morse theory, the above fact implies that $b_2\geq 2$ and $b_{\dim(C)-2}\geq 2$. Since $C$ is symplectic, $b_{2i}(C)\neq 0$ for all $0\leq 2i\leq\dim(C)$, 
we have no more fixed points to fill in the other Betti numbers if $\dim(C) > 6$, so
 $\dim(C)=6$, and $C$ has 2 index 2 fixed points and 2 coindex 2 (or index 4) fixed points. Since $P_1$ is of index 2 in $C$, $a$ is the smallest weight on $C$ and it occurs at $P_0$ twice, by Lemma~\ref{alcx}, there is a weight $a$ from $P_0$ to $P_1$.  Since $a > 1$ is the smallest weight on $C$ and $\{P_1, P_4\}\subset C$, $1$ cannot be a weight at $P_0$ and similarly $-1$ cannot be a weight at $P_5$ by Lemma~\ref{alcx}.

Now assume $b$ has
multiplicity $2$ at $P_0$. Then the connected component $C'$ of  $M^{\Z_a}$
containing $P_0$ contains the weights $(a, b, b)$, so it has dimension at least $6$, and has at least 1 index $2$ fixed point with  weight  $-a$ by Lemma~\ref{alcx}. 
In $C'$, there is a $M^{\Z_b}$ which is of dimension at least $4$ and which contains at least 2 index $2$ fixed points each having a negative weight $-b$ by Lemma~\ref{alcx}.
So the two index 2  fixed points in $M^{\Z_b}$ are different from the index 2 fixed point in $C'$. So
$C'$ contains at least 5 fixed points. By Morse theory and Poincar\'e duality, if $C'$ cantains 5 fixed points, then $\dim(C')$ can only be possibly 8. An 8-dimensional compact Hamiltonian $S^1$-manifold with 5 fixed points has linear weights as on $\CP^4$ at the fixed points, especially, a weight at each fixed point has multiplicity 1 (see \cite{JT} and Example~\ref{CP5} to a lower dimension), a contradiction. Hence $C'$ contains all the 6 fixed points. Similar to the last case, $1$ cannot be a weight at $P_0$ and $-1$ cannot be a weight at $P_5$.

  If $(a, a, b)$ occurs at $P_0$, and $-(a, b, b)$ occurs at $P_5$, by the above, the component
$C$ of $M^{\Z_a}$ containing $P_0$ is at least 6-dimensional and contains all the fixed points, two of which are of index 2, there is a weight $a$ from $P_0$ to $P_1$, and from $P_j$ to $P_5$ since $-a$ occurs at $P_5$, where $P_j$ is of coindex 2 in $C$ ($j\neq 1$). In $C$, let the 2 index 2 fixed points be $P_1$ and $P_i$, each has a unique negative weight $-a$. Then $i\neq 1, j$ ($\index(P_j)=4$).
The component $C''$ of $M^{\Z_b}$ containing $P_5$ contains at least  4 fixed points by Lemma~\ref{alcx}, which cannot include $P_j$ and $P_0$ (since $b$ has multiplicity 1 at $P_0$). It cannot contain $P_i$ since the negative weight at a fixed point in $M^{\Z_b}$ has absolute value no smaller than $b$. So $C''$ does not have enough number of fixed points, a contradiction.
 The other claims can be proved similarly.

 (3) By the assumption and Lemma~\ref{alcx}, $a\neq 1$. Assume $a$ has multiplicity $2$ at $P_0$. By the assumption and Lemma~\ref{alcx} (used on the component), the connected component $C$ of $M^{\Z_a}$ containing $P_0$ contains at least 2 index 2 fixed points with weight $-a$, so it must contain $P_3$, $P_4$ and $P_5$. Since it contains $P_5$, by symmetry of the weights at $P_0$ and $P_5$, the component $C$ also contains $P_2$ and $P_1$, and has dimension at least 6, so $P_3$ and $P_4$ cannot both have index 2 in $C$, a contradiction.
\end{proof}

The next lemma is about the multiplicities of the largest weight at the fixed points.
\begin{lemma}\label{nm2}
Let  $(M, \omega)$ be a compact $10$-dimensional Hamiltonian $S^1$-manifold with moment map $\phi\colon M\to\R$. Assume $M^{S^1}=\{P_0,  P_1, \cdots, P_5\}$ and $[\omega]$ is a primitive integral class.  Let $c_1(M)=k[\omega]$. 
\begin{enumerate}
\item Only when $k=1$ and $k=2$, the largest weight $w$ might have multiplicity $2$ at $P_0$. If $k=2$ and $w$ has multiplicity 2 at $P_0$, then $w$ is a weight from $P_0$ to $P_1$ and $1$ is not a weight at $P_0$. We have a similar claim for $P_5$.
\item If $\phi(P_i)-\phi(P_0)=\phi(P_5)-\phi(P_{5-i})$ for $i=1, 2$ and $ k\geq 2$, then the largest weight $w$ has maximal multiplicity $1$ at $P_0$, similarly, $-w$ has maximal multiplicity 1 at $P_5$. 
\item If $k\geq 3$, the absolute value $|\pm w|$ of the largest weight $w$ has maximal multiplicity 1 at any fixed point. 
\end{enumerate}
 \end{lemma}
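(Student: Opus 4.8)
\medskip

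The plan is to treat all three parts with one mechanism. Suppose the value $w$ (the largest weight on $M$) occurs, with either sign, with total multiplicity $\ge 2$ among the weights at some fixed point, and let $C$ be the connected component of the isotropy submanifold $M^{\Z_w}$ through that point. The (at least two) weight-$\pm w$ directions at that point are tangent to $C$, so $\dim C\ge 4$. Since $w\ge 5>1$ by Theorem~\ref{thm1} and $w$ is the largest weight on $M$, the only weights divisible by $w$ anywhere on $M$ are $\pm w$; hence at every fixed point of $C$ all tangent weights lie in $\{w,-w\}$, so $C$ is a compact Hamiltonian $S^1$-manifold all of whose weights equal $\pm w$. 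Applying Lemma~\ref{alcx} to $C$ (its smallest positive weight being $w$), together with Morse theory and Poincar\'e duality on $C$, shows that $\dim C\ge 6$ would force $C$ to contain at least $8$ fixed points, impossible in $M$. So $\dim C=4$, and $C$ has exactly four fixed points: its minimum $P$, its maximum $Q$, and two fixed points of index $2$ in $C$; at $P$ the tangent weights are $\{w,w\}$ (so in $M$, $+w$ has multiplicity $2$ at $P$ and $-w$ is not a weight at $P$) and at $Q$ they are $\{-w,-w\}$ (so $-w$ has multiplicity exactly $2$ at $Q$).

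Now feed $(P,Q)$ into Lemma~\ref{gammaij} with $s=2$. The index hypothesis $\index(P)\le\index(Q)$ holds: $P$ has three fixed points of $C$ strictly above it and at least $\index(P)/2$ fixed points below it (Lemma~\ref{ind}), so $\index(P)\le 4$ since $M$ has only six fixed points, whereas $\index(Q)\ge 4$ because $-w$ has multiplicity $2$ at $Q$. Lemma~\ref{gammaij} then gives
$$\tfrac12\big(\index(Q)-\index(P)\big)+2=k\,d,\qquad d:=\frac{\phi(Q)-\phi(P)}{w}\in\Z_{>0}.$$
The two index-$2$ fixed points of $C$ have distinct $\phi$-values strictly between $\phi(P)$ and $\phi(Q)=\phi(P)+dw$, all congruent to $\phi(P)$ mod $w$ (Lemma~\ref{k|}), and there are only $d-1$ such values, so $d\ge 3$. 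As $0\le\index(P)$ and $\index(Q)\le 10$, the left side is $\le 7$, so $3k\le kd\le 7$, whence $k\le 2$. Applying this with the given point being one where $|\pm w|$ has multiplicity $\ge 2$ gives part (3), and with $P=P_0$ (so $\index(P_0)=0$) gives the first assertion of part (1).

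For the $k=2$ refinement of part (1): now $2d=\tfrac12\index(Q)+2$ with $d\ge 3$ and $\index(Q)\in\{0,2,\dots,10\}$ (Lemma~\ref{order}), forcing $d=3$, $\index(Q)=8$, so $Q=P_4$, $\phi(P_4)=\phi(P_0)+3w$, with the two middle fixed points of $C$ at $\phi(P_0)+w$ and $\phi(P_0)+2w$. If the lower of these were $P_2$, then $C=\{P_0,P_2,P_3,P_4\}$ and the unique edge of $C$ descending from $P_2$ would end at $P_0$, giving a weight $w$ from $P_0$ to $P_2$ with $-w$ of multiplicity $1$ at $P_2$; then Lemma~\ref{gammaij} on $(P_0,P_2)$ reads $\tfrac12\cdot 4+1=2\cdot\frac{\phi(P_2)-\phi(P_0)}{w}$, i.e.\ $3=2\cdot(\text{integer})$, impossible. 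Hence $P_1$ is the lower middle fixed point, $\phi(P_1)=\phi(P_0)+w$, and $w$ is a weight from $P_0$ to $P_1$. If moreover $1$ were a weight at $P_0$, it would be the smallest positive weight on $M$ with multiplicity $1$ (Lemma~\ref{mult}(1)), and Lemma~\ref{alcx} would force $-1$ to be a weight at the unique index-$2$ fixed point $P_1$; but the unique negative weight at $P_1$ is $-(\phi(P_1)-\phi(P_0))=-w\ne-1$, a contradiction. The assertions for $P_5$ follow by reversing the $S^1$-action. For part (2): if $w$ had multiplicity $2$ at $P_0$ then $k=2$, and by the above $\phi(P_1)-\phi(P_0)=w$, $\phi(P_4)-\phi(P_0)=3w$, with the other middle fixed point of $C$ (one of $P_2,P_3$) at $\phi(P_0)+2w$ and the remaining one strictly between its neighbours; the hypothesis gives $\phi(P_5)=\phi(P_0)+4w$, and then $\phi(P_2)-\phi(P_0)=\phi(P_5)-\phi(P_3)$ forces $\phi(P_2)=\phi(P_3)$, contradicting $\phi(P_2)<\phi(P_3)$; the statement for $P_5$ again follows by reversal.

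The step most in need of care is checking, at each invocation of Lemma~\ref{gammaij}, that its hypotheses genuinely hold --- the index inequality $\index(P)\le\index(Q)$, the exact multiplicity $s$, and the integrality of $\big(\phi(Q)-\phi(P)\big)/w$ --- all of which rest on the six-fixed-point count (via Lemma~\ref{ind}) and on the fact that only $\pm w$ is divisible by $w$; and, inside the $k=2$ case of part (1), the argument ruling out $P_2$ as the bottom interior fixed point of $C$.
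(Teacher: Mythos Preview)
Your argument is correct, and it is organized more economically than the paper's.  Both proofs begin with the same structural observation: a double occurrence of $\pm w$ forces the $\Z_w$-isotropy component $C$ to be $4$-dimensional with exactly four fixed points, all of whose $C$-weights are $\pm w$.  From there the paper and you diverge.  The paper applies Lemma~\ref{gammaij} three times, once to each of the pairs $(P_0,P_i)$, $(P_0,P_j)$, $(P_0,P_l)$ with $s=1,1,2$, obtaining the divisibility constraints $k\mid(i+1)$, $k\mid(j+1)$, $k\mid(l+2)$, and then checks these against $\{i,j,l\}\subset\{1,\dots,5\}$ for each value of $k$; part~(3) is then handled by a separate case analysis at $P_1$ and $P_2$ (and their duals), and part~(2) by producing via Lemma~\ref{10} an extra weight $w$ at $P_4$, which contradicts $P_4$ being the maximum of $C$.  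You instead apply Lemma~\ref{gammaij} a single time, to the extremal pair $(P,Q)$ of $C$ with $s=2$, and combine the resulting equation with the combinatorial bound $d\ge 3$ coming from the two interior fixed points of $C$; this one inequality $3k\le 7$ disposes of parts~(1) and~(3) simultaneously and for all fixed points at once.  Your treatment of the $k=2$ refinement and of part~(2) is also different in detail: rather than invoking Lemma~\ref{10} to manufacture an illegal weight at $P_4$, you read off the moment-map values directly and use the symmetry hypothesis to force $\phi(P_2)=\phi(P_3)$.  What the paper's route buys is slightly more information in the $k=2$ case (it pins down the upper interior point of $C$ as $P_3$, which you leave as $P_2$ or $P_3$), but this extra precision is not needed for the lemma as stated.
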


\begin{proof}
By Lemma~\ref{order} and Corollary~\ref{1<n+1}, 
\begin{equation}\label{eqc1}
c_1(M)=k[\omega]\,\,\,\mbox{with $k\in\N$ and $1\leq k\leq 6$}. 
\end{equation}

(1)  Assume $w$ occurs at $P_0$ with multiplicity 2  (maximal multiplicity by Lemma~\ref{mult}). 
Then $w > 1$ since $1$ occurs at most once at $P_0$ by Lemma~\ref{mult}.
Then the connected component $C$ of $M^{\Z_w}$ containing $P_0$ is (strictly) $4$ dimensional with 2 index 2 fixed points by Lemma~\ref{alcx} (applied on $C$); assume it contains $P_0$, $P_i$, $P_j$ and $P_l$ with $\phi(P_0)< \phi(P_i) < \phi(P_j) < \phi(P_l)$, where $P_i$ and $P_j$ have index 2 and $P_l$ has index 4 in $C$, and $\{i, j, l\}\subset \{1, 2, 3, 4, 5\}$.  By Lemma~\ref{gammaij}, we have 
$$k|(i+1), \,\,\, k|(j+1), \,\,\, \mbox{and} \,\,\, k|(l+2).$$ 
We see this is not possible if $k \in\{3, 4, 5, 6\}$. If $k=1$ or $k=2$, this is possible. 
If $k=2$, then it is only possible that
$i=1$, $j=3$ and $l=4$; in this case, $w$ is from $P_0$ to $P_1$, and by Lemma~\ref{alcx}, $1$ cannot be a weight at $P_0$ . 
We can prove a similar claim for $P_5$.

(2) If $\phi(P_i)-\phi(P_0)=\phi(P_5)-\phi(P_{5-i})$ for $i=1, 2$, and $k\geq 2$, then in (1), the case $k=2$ and $i=1$, $j=3$ and $l=4$ cannot happen, since there would also be a weight $w$  from $P_4$ to $P_5$ by Lemma~\ref{10}. 

(3)  Now let $k\geq 3$.  By (1) and Lemma~\ref{mult} (1),  $|\pm w|$ has maximal multiplicity 1 at $P_0$ and at $P_5$.

Assume $|\pm w|$ occurs at $P_1$. First, if $-w$ occurs at $P_1$, then $w$ is a weight from $P_0$ to $P_1$ by Lemma~\ref{10}, then $c_1(M)=2[\omega]$ by Lemma~\ref{gammaij}, contradicting to
$k\geq 3$. Now assume $w$ occurs at $P_1$. Then by Lemma~\ref{large}, similar argument yields that $w$ cannot be from $P_1$ to $P_2$. Let $P_i$ with $i\geq 3$ be the closest fixed point such that $w$ is from $P_1$ to $P_i$. If $w$ has multiplicity (at least) $2$ at $P_1$, then 
the connected component $C$ of $M^{\Z_w}$ containing $P_1$ is (at least) 4-dimensional, it contains (at least) $4$ fixed points $P_1$, $P_i$ and another 2 above $P_i$ by Lemma~\ref{alcx}, this is not possible since $C$ cannot contain $P_5$ (since $-w$ has maximal multiplicity 1 at $P_5$). This shows that $|\pm w|$ has multiplicity 1 at $P_1$.

Assume $|\pm w|$ occurs at $P_2$.  If $-w$ occurs at $P_2$ twice, then the connected component $C$ of $M^{\Z_w}$ containing $P_2$ is at least 4-dimensional, it must contain both $P_0$ and $P_1$, contradicting to $w$ having maximal multiplicity $1$ at $P_0$. If $-w$ and $w$ both occur at $P_2$,
then  the connected component $C$ of $M^{\Z_w}$ containing $P_2$ is at least 4-dimensional and contains at least a fixed point below $P_2$, contradicting to 
$|\pm w|$ having maximal multiplicity 1 at $P_0$ and at $P_1$. Similarly, we can argue that $w$ cannot occur at $P_2$ twice.

The other cases follow similarly by looking at the reversed circle action.
\end{proof}

Proposition~\ref{c1+l} claims the relation between the condition  $c_1(M)=3[\omega]$ and the places where the largest weight occurs.

\begin{proposition}\label{c1+l}
Let  $(M, \omega)$ be a compact $10$-dimensional Hamiltonian $S^1$-manifold with moment map $\phi\colon M\to\R$. Assume $M^{S^1}=\{P_0,  P_1, \cdots, P_5\}$ and $[\omega]$ is a primitive integral class. 
If $c_1(M)=3[\omega]$, then the following are the only ways the largest weight possibly occurs and at least one of them holds:
\begin{enumerate}
\item The largest weight is from $P_0$ to $P_5$ and is equal to $\frac{1}{2}\big(\phi(P_5)-\phi(P_0)\big)$.  
\item The largest weight  is from $P_0$ to $P_2$ and is equal to $\phi(P_2)-\phi(P_0)$, or is from $P_3$ to $P_5$ and is equal to $\phi(P_5)-\phi(P_3)$,  or both. 
\item The largest weight is from $P_1$ to $P_3$ and is equal to $\phi(P_3)-\phi(P_1)$, or is from $P_2$ to $P_4$ and is equal to $\phi(P_4)-\phi(P_2)$, or both.
\end{enumerate}
Conversely, if (1) holds, or (2) holds, or (3) holds and $c_1(M)=k[\omega]$ with $k>2$, 
or (3) holds and $\phi(P_i)-\phi(P_0)=\phi(P_5)-\phi(P_{5-i})$ for $i=1, 2$, then $c_1(M)=3[\omega]$, and the absolute value of the largest weight has multiplicity $1$ at the  fixed point(s) where it occurs.
\end{proposition}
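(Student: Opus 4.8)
\textbf{Proof proposal for Proposition~\ref{c1+l}.}

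The plan is to split the argument into the ``only if'' direction --- assuming $c_1(M)=3[\omega]$ and locating the largest weight --- and the ``if'' direction --- assuming one of (1)--(3) and deducing $c_1(M)=3[\omega]$ with multiplicity one. For the converse (``if'') direction I would apply Lemma~\ref{gammaij} directly. In each of cases (1), (2), (3), the hypothesis names two fixed points $P$, $Q$ with an explicit largest weight $w$ from $P$ to $Q$, and (by Lemma~\ref{large}, or in case (1) by Lemma~\ref{nm2}(2)/(3) together with the extra symmetry or $k>2$ hypothesis) $-w$ has multiplicity $s=1$ at $Q$. Then the formula $j-i+s = k\,\frac{\phi(Q)-\phi(P)}{w}$ of Lemma~\ref{gammaij}, with $\frac{\phi(Q)-\phi(P)}{w}$ equal to $2$ in case (1) (indices $0$ and $5$) and to $1$ in cases (2) and (3) (indices $0,2$; $3,5$; $1,3$; $2,4$), forces $k=3$ in every case. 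For instance in case (1): $5-0+1 = k\cdot 2$ gives $k=3$; in case (2): $2-0+1=k\cdot 1$ gives $k=3$; in case (3): $3-1+1 = k\cdot 1$ gives $k=3$. The multiplicity-one statement is exactly the $s=1$ conclusion we fed in, justified by Lemma~\ref{large} (and Lemma~\ref{nm2} in case (1)). The only subtlety is that in case (3) one must know $k>2$ or the symmetry $\phi(P_i)-\phi(P_0)=\phi(P_5)-\phi(P_{5-i})$ in order to rule out the possibility that $w$ has multiplicity $2$ (i.e.\ $s>1$); this is precisely why those hypotheses appear in the statement, and one invokes Lemma~\ref{nm2}(2)--(3) to exclude multiplicity $2$ there, then Lemma~\ref{large}-type reasoning for $s=1$.

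For the ``only if'' direction, suppose $c_1(M)=3[\omega]$ and let $w>0$ be the largest weight on $M$. Choose $P$ a lowest fixed point (in $\phi$) with $-w$ not a weight at $P$ and $Q$ a closest fixed point above $P$ with a weight $w$ from $P$ to $Q$; by Lemma~\ref{large}, $-w$ has multiplicity $1$ at $Q$. Writing $\index(P)=2i$, $\index(Q)=2j$, Lemma~\ref{gammaij} gives $j-i+1 = 3\cdot\frac{\phi(Q)-\phi(P)}{w}$, so $j-i+1$ is a positive multiple of $3$; since $0\le i<j\le 5$ we have $1\le j-i\le 5$, hence $j-i+1\in\{3,6\}$, i.e.\ $j-i=2$ or $j-i=5$. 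If $j-i=5$ then $(i,j)=(0,5)$ and $\frac{\phi(Q)-\phi(P)}{w}=2$, which is case (1). If $j-i=2$ then $\frac{\phi(Q)-\phi(P)}{w}=1$, so $w=\phi(Q)-\phi(P)$, and $(i,j)\in\{(0,2),(1,3),(2,4),(3,5)\}$. The cases $(0,2)$ and $(3,5)$ are case (2); the cases $(1,3)$ and $(2,4)$ are case (3). The ``at least one holds'' assertion is then automatic since $w$, $P$, $Q$ exist. To get the full ``$P_0$--$P_2$ \emph{or} $P_3$--$P_5$ or both'' (resp.\ ``$P_1$--$P_3$ or $P_2$--$P_4$ or both'') phrasing rather than just one pair, I would additionally run the argument for the reversed $S^1$-action (which swaps $P_i\leftrightarrow P_{5-i}$ and preserves the largest weight): e.g.\ if the largest weight occurs from $P_1$ to $P_3$, reversing shows it also occurs, for the reversed action, from $P_1$ to $P_3$, i.e.\ from $P_2$ to $P_4$ for the original action when the configuration is symmetric; more carefully, one notes that the choice of $P$ as the \emph{lowest} such fixed point and the symmetric choice from the top both produce valid pairs, and collate them.

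The main obstacle I anticipate is bookkeeping the edge cases where the largest weight could a priori have multiplicity $2$ at one of the chosen fixed points, which would change $s$ in Lemma~\ref{gammaij} and hence the arithmetic: one must be sure that for $c_1(M)=3[\omega]$ we are always in the $s=1$ situation. This is handled by Lemma~\ref{nm2}(3), which says that for $k\ge 3$ the absolute value of the largest weight has multiplicity $1$ at every fixed point; so when $c_1(M)=3[\omega]$ every occurrence of $\pm w$ is simple, and Lemma~\ref{large}/Lemma~\ref{gammaij} apply cleanly with $s=1$. A secondary nuisance is making the ``or both'' disjunctions precise --- this is purely a matter of noting that the chosen pair $(P,Q)$ need not be unique and that reversing the action produces the mirror pair --- and confirming in the converse direction that $w$ really is the \emph{largest} weight on $M$ (not merely \emph{a} large weight from $P$ to $Q$), which follows because the hypotheses of (1)--(3) say ``the largest weight'' is from $P$ to $Q$. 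I would organize the write-up so that Lemma~\ref{gammaij} is invoked once in a uniform way, with a short table of the four index pairs and the two possible ratios $\frac{\phi(Q)-\phi(P)}{w}$, making the case analysis transparent.
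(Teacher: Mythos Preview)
Your overall strategy matches the paper's: both directions rest on Lemma~\ref{gammaij}, with Lemma~\ref{large} and Lemma~\ref{nm2} supplying the $s=1$ input. The forward direction (assuming $c_1(M)=3[\omega]$) is fine, and your use of Lemma~\ref{nm2}(3) to ensure every occurrence of $\pm w$ is simple is exactly what makes the ``only ways'' claim go through.

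There is, however, a genuine gap in your converse argument for case~(1). You justify $s=1$ at $P_5$ by invoking ``Lemma~\ref{nm2}(2)/(3) together with the extra symmetry or $k>2$ hypothesis,'' but case~(1) carries \emph{no} such extra hypothesis: the statement says ``if (1) holds'' with nothing more. Lemma~\ref{nm2}(2) needs the symmetry condition, and Lemma~\ref{nm2}(3) needs $k\ge 3$, which is precisely what you are trying to prove. Lemma~\ref{large} does not help either, since it requires $Q$ to be the \emph{closest} fixed point above $P$ on the $\Z_w$-component, and nothing rules out an intermediate fixed point between $P_0$ and $P_5$ at moment level $\phi(P_0)+w$. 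The paper's fix is short but essential: by Lemma~\ref{mult}(1), $-w$ has multiplicity at most~$2$ at $P_5$, so $s\in\{1,2\}$; then Lemma~\ref{gammaij} gives $5+s=2k$, and parity forces $s=1$, hence $k=3$. You should incorporate this argument.

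A smaller point: in case~(3) with the symmetry hypothesis (but not $k>2$), your sketch ``invoke Lemma~\ref{nm2}(2)--(3) to exclude multiplicity~2'' is too quick, since Lemma~\ref{nm2}(2) only controls multiplicities at $P_0$ and $P_5$, not at $P_1$. The paper argues by contradiction: if $-w$ occurred at $P_1$, the $\Z_w$-component through $P_1$ would force $w$ to have multiplicity~$2$ at $P_0$ and $w=\phi(P_1)-\phi(P_0)$, giving $k=2$ via Lemma~\ref{gammaij}; this then contradicts Lemma~\ref{nm2}(2). You will need this extra step.
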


\begin{proof}
If $c_1(M)=3[\omega]$, by Lemmas~\ref{gammaij} and \ref{large}, 
(1), (2), and (3) are the only possibilities where the largest weight occurs and
at least one of them holds.

Next, we prove the converse claim.  As a number, let $w\in\N$ be the largest weight.

Assume (1) holds. By Lemma~\ref{mult}, $w$ has maximal multiplicity 2 at $P_0$; similarly $-w$ has maximal multiplicity 2 at $P_5$.
By  Lemma~\ref{gammaij} used for $P_0$ and $P_5$, we obtain that $-w$ has multiplicity $1$ at $P_5$, and $c_1(M)=3[\omega]$. So $w$ also has multiplicity $1$ at $P_0$.

Assume (2) holds. If the largest weight is from $P_0$ to $P_2$ and is equal to $\phi(P_2)-\phi(P_0)$, then by Lemmas~\ref{large} and \ref{gammaij} with $s=1$, we get $c_1(M)=3[\omega]$. For the ``or" part, consider the reversed circle action, and consider similarly.  By Lemma~\ref{nm2}, $|\pm w|$ has multiplicity 1 at the fixed points where it occurs. 

Assume (3) holds and $c_1(M)=k[\omega]$ with $k>2$.  If the largest weight is from $P_1$ to $P_3$,  by Lemma~\ref{nm2}, $|\pm w|$ has multiplicity 1 at $P_1$ and $P_3$.
By Lemmas~\ref{large} and \ref{gammaij} for $P_1$ and $P_3$ with $s=1$, we get $c_1(M)=3[\omega]$.  If the largest weight is from $P_2$ to $P_4$, argue similarly.

Assume (3) holds and $\phi(P_i)-\phi(P_0)=\phi(P_5)-\phi(P_{5-i})$ for $i=1, 2$. Assume the largest weight $w$ is from $P_1$ to $P_3$.  If $-w$ occurs at $P_1$, then the $M^{\Z_w}$ containing $P_1$ is $4$-dimensional and must contain $P_0$, so it has multiplicity 2 at $P_0$. By Lemma~\ref{10}, $w=\phi(P_1)-\phi(P_0)$ is from $P_0$ to $P_1$.
By Lemma~\ref{gammaij} for $P_0$ and $P_1$, we get $c_1(M)=2[\omega]$. 
Then the multiplicity of $w$ at $P_0$ contradicts to Lemma~\ref{nm2} (2). So $-w$ does not occur at $P_1$. Then by
Lemmas~\ref{large} and \ref{gammaij} for $P_1$ and $P_3$ with $s=1$, we get $c_1(M)=3[\omega]$.  By Lemma~\ref{nm2}, $|\pm w|$ has multiplicity 1 at $P_1$ and $P_3$.
If the largest weight is from $P_2$ to $P_4$, then look at the reversed circle action and argue similarly. 
\end{proof}

To prepare to prove Theorem~\ref{thm1}, we first prove the next lemma.
\begin{lemma}\label{0235}
Let  $(M, \omega)$ be a compact $10$-dimensional Hamiltonian $S^1$-manifold with moment map 
$\phi$. Assume $M^{S^1}=\{P_0,  P_1, \cdots, P_5\}$ and $[\omega]$ is an integral class. Assume the weights $(b, b)$ occur at $P_0$ and
$(-b, -b)$ occur at $P_5$, and there are  $P_i$ and $P_j$ such that $b\nmid\big(\phi(P_i)-\phi(P_0)\big)$ and $b\nmid\big(\phi(P_5)-\phi(P_j)\big)$, where $i\neq j$ and $i, j\in\{1, 2, 3, 4\}$.
Then a connected component of $M^{\Z_b}$ containing $P_0$ is of dimension 4 and it contains 4 fixed points including $P_0$, $P_5$ and two other fixed points which are not $P_i$ and $P_j$, so there is a weight $b$ between $P_0$ and $P_5$.
\end{lemma}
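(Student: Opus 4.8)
\textbf{Proof proposal for Lemma~\ref{0235}.}

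The plan is to analyze the connected component $C$ of $M^{\Z_b}$ containing $P_0$ by counting its fixed points via the Morse theory of $\phi|_C$ and Poincar\'e duality, exactly in the style of the proofs of Lemmas~\ref{mult} and \ref{nm2}. First I would observe that since $(b,b)$ occurs at $P_0$, the component $C$ has real dimension at least $4$; and since $P_0$ is the global minimum of $\phi$, it is in particular the minimum of $\phi|_C$, so $P_0$ has index $0$ in $C$. By Lemma~\ref{alcx} applied to $C$ (with $b$ the smallest positive weight on $C$ — here I must first note $b>1$, which follows because $1$ has multiplicity at most $1$ at $P_0$ by Lemma~\ref{mult}(1), so the weight $b$ that occurs twice satisfies $b>1$), the weight $b$ occurring twice at the index-$0$ point $P_0$ forces the weight $-b$ to occur at (at least) two index-$2$ fixed points of $C$. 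Symmetrically, using $(-b,-b)$ at $P_5$ and the reversed circle action, $C':=$ component of $M^{\Z_b}$ through $P_5$ has $P_5$ as its maximum, and $+b$ occurs at two coindex-$2$ fixed points of $C'$.

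Next I would show $C=C'$, i.e.\ $P_0$ and $P_5$ lie on the same component of $M^{\Z_b}$. This is the crux. The two index-$2$ fixed points of $C$ each have a single negative weight equal to $-b$; call them $P_{r}, P_{s}$. By Lemma~\ref{k|}, every fixed point $P_m$ in $C$ satisfies $b\mid(\phi(P_m)-\phi(P_0))$. By hypothesis $b\nmid(\phi(P_i)-\phi(P_0))$, so $P_i\notin C$; similarly, since $b\mid(\phi(P_5)-\phi(P_m))$ for $P_m\in C'$ and $b\nmid(\phi(P_5)-\phi(P_j))$, we get $P_j\notin C'$. So $C$ omits $P_i$ and $C'$ omits $P_j$. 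Now count: $C$ contains $P_0$ and at least two index-$2$ points, and by Poincar\'e duality at least two coindex-$2$ points and one top point; if these were all distinct that is $6$ fixed points, forcing $C=M$, contradicting $\dim C<10$ (equality $\dim C=10$ is impossible since $\Z_b\neq 1$ acts with some point of nontrivial stabilizer). Hence some of these coincide, which by the index count can only happen when $\dim C=4$ and the two index-$2$ points coincide with the two coindex-$2$ points (as in dimension $4$ index $2$ equals coindex $2$); so $C$ has exactly $4$ fixed points: $P_0$ of index $0$, two of index $2$, and one of index $4$. The same analysis gives $|C'|=4$. If $C\neq C'$ then $C\cup C'$ would account for $4+4$ fixed points among only $6$, so $|C\cap C'|\geq 2$; but a common point of $C$ and $C'$ lies in $M^{\Z_b}$ and $C,C'$ are \emph{connected components}, so $C\cap C'\neq\varnothing$ already forces $C=C'$. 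Therefore $P_5\in C$, and since $\phi(P_5)$ is maximal, $P_5$ is the index-$4$ (top) fixed point of $C$.

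Finally, once $C=C'$ is the $4$-dimensional component containing $P_0,P_5$, I read off the conclusion: $C$ has exactly $4$ fixed points, namely $P_0$ (index $0$), $P_5$ (index $4$ in $C$), and two others; these two others are the index-$2$ points of $C$, and they are not $P_i$ (already excluded, $P_i\notin C$) nor $P_j$ (since $P_j\notin C'=C$). Because $b$ occurs (twice) at $P_0$ and $-b$ occurs (twice) at $P_5$ with $P_0,P_5$ on the same component $C$ of $M^{\Z_b}$, by definition there is a weight $b$ from $P_0$ to $P_5$. I expect the main obstacle to be the bookkeeping that pins $\dim C=4$: one must rule out $\dim C=6,8$ using the precise count of index-$2$ and coindex-$2$ fixed points together with the constraint that $C$ misses at least one of $P_1,\dots,P_4$, much as in the proof of Lemma~\ref{mult}(2), and then argue (as above) that any intersection of the components through $P_0$ and through $P_5$ forces them to coincide.
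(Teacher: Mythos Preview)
Your approach is essentially the same as the paper's, but there is one genuine slip in the logic. The line ``if these were all distinct that is $6$ fixed points, forcing $C=M$, contradicting $\dim C<10$'' is not a valid contradiction: a proper $\Z_b$-isotropy submanifold can, in principle, contain all six $S^1$-fixed points without being all of $M$. The contradiction you actually need---and which you yourself gesture at in your last paragraph---is simply that $P_i\notin C$, so $C$ has at most $5$ fixed points; since $\dim C\geq 6$ would require at least $1+2+2+1=6$ fixed points by the index/Poincar\'e-duality count, this forces $\dim C=4$.

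The paper's proof is slightly more direct at this step: it observes in addition that $C$ cannot contain $P_j$ and $P_5$ simultaneously (otherwise $b\mid\phi(P_5)-\phi(P_j)$ by Lemma~\ref{k|}, contradicting the hypothesis), so $C$ has at most $4$ fixed points; together with the lower bound of $4$ from Lemma~\ref{alcx} this pins $\dim C=4$ and $|C^{S^1}|=4$ in one stroke, without needing the separate Poincar\'e-duality count for $\dim C=6,8$. Your pigeonhole argument for $C=C'$ (two $4$-point components among $6$ fixed points must overlap, and overlapping components coincide) is exactly the paper's.
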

\begin{proof}
Since $b\nmid\big(\phi(P_i)-\phi(P_0)\big)$ and $b\nmid\big(\phi(P_5)-\phi(P_j)\big)$, by Lemma~\ref{k|}, the connected component $C$ of $M^{\Z_b}$ containing $P_0$ does not contain $P_i$ and cannot contain $P_j$ and $P_5$ simultaneously, so $C$ can have at most 4 fixed points. By Lemma~\ref{alcx}, $C$ contains at least 2 index 2 fixed points and 1 top index fixed point, so $C$ contains at least 4 fixed points. Hence $C$ contains exactly 4 fixed points, $\dim(C)=4$, and each index 2 fixed point in $C$ has a negative weight $-b$.
Similarly, the connected component $C'$ of $M^{\Z_b}$ containing $P_5$ has dimension 4 and 
contains 4 fixed points, each of its coindex 2 fixed point has a positive weight $b$.
Since $C$ and $C'$ must have a non-extremal fixed point in common, $C=C'$, so it contains $P_0$, $P_5$ and the other 2 non-extremal fixed points other than $P_i$ and $P_j$.  
\end{proof}

We can prove Theorem~\ref{thm1} now in the Introduction.

\begin{theorem 1}
Let  $(M, \omega)$ be a compact $10$-dimensional Hamiltonian $S^1$-manifold. Assume $M^{S^1}=\{P_0,  P_1, \cdots, P_5\}$ and $[\omega]$ is a primitive integral class. Then $c_1(M)=k[\omega]$ with $k\in\N$ and $1\leq k\leq 6$, and the largest weight is at least 5.
\end{theorem 1}

\begin{proof}
Lemma~\ref{order} and Corollary~\ref{1<n+1} give the first claim. 

Next, we prove the second claim.
Let $\phi\colon M\to \R$ be the moment map.
Note that there are 5 weights at $P_0$.
By Lemma~\ref{mult} (1), $1$ has multiplicity at most 1 at $P_0$, so 
$1$ cannot be the largest weight on $M$; similarly, since $2$ has maximal multiplicity $2$ at $P_0$, 
the set of weights cannot be $\{1, 2, 2\}$ at $P_0$, so $2$ cannot be the largest weight on $M$. 

Assume $3$ is the largest weight on $M$.  By Lemmas~\ref{mult} (1) and Lemma~\ref{nm2} (1), 
if $k\geq 2$, then the set of weights at $P_0$ can only be $\{3, 3, 2, 2\}$ or $\{3, 2, 2, 1\}$, not enough number of weights, a contradiction. 
If $k=1$, besides the above 2 impossible cases, the set of weights at $P_0$ might be $\{3, 3, 2, 2, 1\}$; similarly, the set of weights at $P_5$ might be  $- \{3, 3, 2, 2, 1\}$. 
By Lemmas~\ref{alcx} and \ref{10}, $1=\phi(P_1)-\phi(P_0)=\phi(P_5)-\phi(P_4)$.
By Lemma~\ref{0235}, $3$ is a  weight between $P_0$ and $P_5$. Then the sets of weights at $P_0$ and $P_5$  contradict to Lemma~\ref{mod}.  We have proved that $3$ cannot be the largest weight on $M$; and the match $\{3, 3, 2, 2, 1\}$ and $- \{3, 3, 2, 2, 1\}$ cannot be respectively the sets of weights at $P_0$ and $P_5$.

Now assume $4$ is the largest weight on $M$. By Lemmas~\ref{mult} (2)
and \ref{nm2}, the possible set of weights at $P_0$ 
is $\{4, 4, 2, 3, 3\}$ or $\{4, 4, 1, 3, 3\}$ 
or  $\{4, 1, 2, 3, 3\}$ or $\{4, 2, 2, 3, 3\}$ or $\{1, 2, 2, 3, 3\}$. Similarly, the set of weights at $P_5$ is the negative of one of these sets. 

If the set of weights at $P_0$ is $\{4, 4, 2, 3, 3\}$, then by Lemma~\ref{mult} (2) and the possibilities above on the set of weights at $P_5$,  the set of weights at $P_5$ is $-\{4, 4, 2, 3, 3\}$. 
By Lemmas~\ref{mult} (2) and \ref{10},  $2=\phi(P_1)-\phi(P_0)=\phi(P_5)-\phi(P_4)$. By Lemma~\ref{0235}, there is a weight $3$ (or $4$) between $P_0$ and $P_5$, the sets of weights at $P_0$ and $P_5$ contradict to Lemma~\ref{mod}. Hence $\{4, 4, 2, 3, 3\}$ cannot be the set of weights at $P_0$. Similarly, $-\{4, 4, 2, 3, 3\}$  cannot be the set of weights at $P_5$.

Similar to the last case, $\{4, 2, 2, 3, 3\}$ cannot be the set of weights at $P_0$.  
Similarly $-\{4, 2, 2, 3, 3\}$ cannot be the set of weights at $P_5$.

If  $\{4, 4, 1, 3, 3\}$ is the set of weights at $P_0$,  by the possibilities on the set of weights at $P_5$ and what we have excluded,  the set of weights at $P_5$ is  $-\{4, 4, 1, 3, 3\}$ or $-\{4, 1, 2, 3, 3\}$ 
or $-\{1, 2, 2, 3, 3\}$. By Lemmas~\ref{alcx} and \ref{10}, 
$1=\phi(P_1)-\phi(P_0)=\phi(P_5)-\phi(P_4)$. By Lemma~\ref{0235}, $3$ is a weight between $P_0$ and $P_5$. The matches all contradict to Lemma~\ref{mod}. Hence $\{4, 4, 1, 3, 3\}$ cannot be the set of weights at $P_0$. Similarly,  $-\{4, 4, 1, 3, 3\}$ cannot be the set of weights at $P_5$.

If $\{4, 1, 2, 3, 3\}$ is the set of weights at $P_0$, by the possibilities on the set of weights at $P_5$ and what we have excluded, the set of weights at $P_5$ is $-\{4, 1, 2, 3, 3\}$ or $-\{1, 2, 2, 3, 3\}$. We can similarly as above exclude the possibility that $\{4, 1, 2, 3, 3\}$ and $-\{4, 1, 2, 3, 3\}$ are respectively the sets of weights at $P_0$ and at $P_5$. Now assume $-\{1, 2, 2, 3, 3\}$ is the set of weights at $P_5$. As above, we have $1=\phi(P_1)-\phi(P_0)=\phi(P_5)-\phi(P_4)$.
By Lemma~\ref{0235}, a component of $M^{\Z_3}$ containing $P_0$ contains $P_0$, $P_2$, $P_3$ and $P_5$. We can similarly argue that a component of $M^{\Z_2}$ containing $P_5$ contains 
$P_0$, $P_2$, $P_3$ and $P_5$. So the largest weight 4 is from $P_0$ to $P_2$ or from $P_0$ to $P_3$.  If $4$ is from $P_0$ to $P_3$, then by Lemma~\ref{k|}, $3$ and $4$  both divide $\phi(P_3)-\phi(P_0)$, together with Lemma~\ref{gammaij} (with $s=1$), we get $\frac{\phi(P_3)-\phi(P_0)}{4}$ divides 4, a contradiction. Now assume $4$ is from $P_0$ to $P_2$. Then by Lemma~\ref{k|}, $3$ and $4$ both divide $\phi(P_2)-\phi(P_0)$, together with Lemma~\ref{gammaij} (with $s=1$), we get $\phi(P_2)-\phi(P_0)=12$ and $c_1(M)=[\omega]$.
Then by Lemma~\ref{sub} for $P_0$ and $P_5$, and the fact that $\phi(P_3)-\phi(P_2)$ and
$\phi(P_5)-\phi(P_3)$ are divisible by $2$ and $3$, we get $\phi(P_3)-\phi(P_2)=6$, and 
$\phi(P_5)-\phi(P_3)=6$. We already know that $\{2, 3\}$ is the set of positive weights at $P_3$,
and $-2$ and $-3$ are negative weights at $P_3$. By Lemma~\ref{<} (and the fact that $4$ is the largest weight), we see that the other negative weight at $P_3$ has absolute value no bigger than $3$. Hence $3$ is the largest weight at $P_3$ and at $P_5$ and $3$ is between $P_3$ and $P_5$.
Look at the manifold upside down and using Lemma~\ref{gammaij} for $P_3$ and $P_5$, we see a contradiction that $3=1\cdot 2$.
We have shown that $\{4, 1, 2, 3, 3\}$ cannot be the set of weights at $P_0$. Similarly, $-\{4, 1, 2, 3, 3\}$ cannot be the set of weights at $P_5$.

We already excluded the possibility that $\pm\{1, 2, 2, 3, 3\}$ are the sets of weights at $P_0$ and $P_5$.

The argument above shows that $4$ cannot be the largest weight on $M$.

By Examples~\ref{o}, \ref{CP5}, and \ref{grass}, the largest weight can be 5.
\end{proof}

Based on Theorem~\ref{thm1}, in the rest of this section, we concentrate on the case that the largest weight is 5.

Assume $c_1(M)=3[\omega]$, and the largest weight is $5$, the next lemma  excludes the possibility that the largest weight is from $P_0$ to $P_2$, or from $P_3$ to $P_5$, i.e., the possible case (2) in Proposition~\ref{c1+l}.

\begin{lemma}\label{n02}
Let  $(M, \omega)$ be a compact $10$-dimensional Hamiltonian $S^1$-manifold with moment map $\phi\colon M\to\R$. Assume $M^{S^1}=\{P_0,  P_1, \cdots, P_5\}$ and $[\omega]$ is a primitive integral class. 
If $c_1(M)=3[\omega]$ and the largest weight is $5$, 
then $5$  cannot be from $P_0$ to $P_2$ and is equal to $\phi(P_2)-\phi(P_0)$, similarly, it cannot be from $P_3$ to $P_5$ and is equal to $\phi(P_5)-\phi(P_3)$.
 \end{lemma}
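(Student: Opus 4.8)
The plan is to argue by contradiction: suppose $5=\phi(P_2)-\phi(P_0)$ is a weight from $P_0$ to $P_2$. Since $c_1(M)=3[\omega]$, i.e.\ $k=3\ge 3$, Lemma~\ref{nm2}(3) shows $|\pm 5|$ has multiplicity $1$ at every fixed point. All weights at $P_0$ are positive and at most $5$, so the only one divisible by $5$ is the single $+5$, and hence the connected component $C$ of $M^{\Z_5}$ through $P_0$ has complex dimension $1$; being connected, it then cannot have $+5$ among the weights at $P_2$ either (that would make its complex dimension at $P_2$ at least $2$), so $C$ is a $2$-sphere with $C\cap M^{S^1}=\{P_0,P_2\}$, tangent weight $+5$ at $P_0$ and $-5$ at $P_2$.

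First I would read off the available local data. Write the weights at $P_0$ as $\{5,a,b,c,d\}$ with $a,b,c,d\in\{1,2,3,4\}$ and (by Lemma~\ref{mult}) the value $1$ occurring at most once. By Lemma~\ref{order}, $P_1$ has index $2$ and $P_2$ has index $4$, so $P_2$ has exactly two negative weights, one being $-5$. Lemma~\ref{mod} applied on $C$ says the weights at $P_2$ equal $\{5,a,b,c,d\}$ modulo $5$; since $5$ is the largest weight on $M$ and has multiplicity $1$ at $P_2$, this forces the weights at $P_2$ to be $\{-5,\,-(5-r)\}\cup(\{a,b,c,d\}\setminus\{r\})$ for exactly one $r\in\{a,b,c,d\}$, so the second negative weight at $P_2$ is $-\mu$ with $\mu=5-r\in\{1,2,3,4\}$ and $r$ a weight at $P_0$. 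Next, by Lemma~\ref{<}, $\mu$ divides $\phi(Q)-\phi(P_2)$ for some $Q$ with $\phi(Q)<\phi(P_2)$, i.e.\ $Q\in\{P_0,P_1\}$; since $\phi(P_0)-\phi(P_2)=-5$ and $\mu\le4$, either $\mu=1$ or $\mu$ divides $\phi(P_2)-\phi(P_1)$. Finally, by Lemma~\ref{10}, $a:=\phi(P_1)-\phi(P_0)$ is a weight from $P_0$ to $P_1$, hence is one of $a,b,c,d$ and equals the unique negative weight $-a$ at $P_1$.

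I would then split into the four cases $\bigl(\phi(P_1)-\phi(P_0),\,\phi(P_2)-\phi(P_1)\bigr)\in\{(1,4),(2,3),(3,2),(4,1)\}$. In each, the constraints above leave only a handful of candidate weight sets at $P_0$, $P_1$, $P_2$. To close a branch I would combine: Lemma~\ref{other} at $P_0$ (each weight of $P_0$ is a weight from $P_0$ to some $P_j$ with $j\ne 0,2$, hence divides $\phi(P_j)-\phi(P_0)$); Lemma~\ref{sub} (the relations $\Gamma_{P_i}-\Gamma_{P_j}=3\bigl(\phi(P_j)-\phi(P_i)\bigr)$, most usefully for the pairs $P_0,P_1$ and $P_0,P_5$); Lemma~\ref{mod} on the $\Z_a$- and $\Z_\mu$-components linking the interior weights to those at $P_0$; Lemma~\ref{alcx} on the smallest positive weight on $M$; and, whenever a further copy of $\pm 5$ is forced somewhere, Lemma~\ref{mod}, Lemma~\ref{gammaij} and the multiplicity-$1$ bound once more. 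Typically the contradiction is that the forced weights (or the balance between the counts of $+5$ and $-5$, using $W^-=-W^+$) at $P_0$ and $P_5$ become incompatible, or that some $\Z_w$-component is forced to contain the wrong number of fixed points relative to Lemma~\ref{ind}. The symmetric assertion — that $5$ cannot be from $P_3$ to $P_5$ with value $\phi(P_5)-\phi(P_3)$ — follows by applying everything to the reversed circle action (moment map $-\phi$), which interchanges $P_i\leftrightarrow P_{5-i}$ and turns that configuration into the one just excluded.

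The hard part will be the bookkeeping inside the four cases: the sharp extremal lemmas (Lemmas~\ref{other} and~\ref{10}) constrain only $P_0$ and $P_5$ directly, so controlling the weights at the interior fixed points $P_1,P_3,P_4$ — in particular ruling out stray copies of the weight $5$ there — requires using Lemmas~\ref{<},~\ref{mod},~\ref{ind} together with the $c_1$-relations of Lemma~\ref{sub} simultaneously, and one must check that every sub-branch genuinely terminates.
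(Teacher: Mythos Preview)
Your setup is correct: the $\Z_5$-sphere between $P_0$ and $P_2$ does force the weight set at $P_2$ to be $\{-5,-(5-r)\}\cup\bigl(\{a,b,c,d\}\setminus\{r\}\bigr)$ for a single $r$, and the split into four cases on $\phi(P_1)-\phi(P_0)$ is legitimate. But your route is genuinely different from the paper's and, as written, is only a plan: the four cases (each with several sub-branches) are never actually closed.

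The paper does not work through the $P_0$--$P_2$ sphere at all. Instead it pivots on the \emph{two extremal} points $P_0$ and $P_5$. Using Lemma~\ref{mult}(2) together with $2\nmid 5=\phi(P_2)-\phi(P_0)$, it first forbids $\pm(2,2,4)$ and $\pm(2,4,4)$ at $P_0,P_5$; combined with Lemma~\ref{mult}(1) and the observation that $-5$ at $P_5$ (if present) must be the edge $P_3$--$P_5$, this pins the weight set at $P_0$ down to just two possibilities and at $P_5$ to five. The divisibility $3\mid(\Gamma_0-\Gamma_5)$ leaves only three pairings, each killed quickly---two of them by Lemma~\ref{0235}, which forces a weight between $P_0$ and $P_5$ and then contradicts Lemma~\ref{mod} or the size of $\phi(P_5)-\phi(P_0)$.

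The practical problem with your plan is that the local data at $P_0,P_1,P_2$ do not determine $\phi(P_3),\phi(P_4),\phi(P_5)$, so Lemma~\ref{other} at $P_0$ gives you nothing concrete until you also analyze $P_5$. Once you do that---bounding $\Gamma_5$ and using $\Gamma_0-\Gamma_5=3(\phi(P_5)-\phi(P_0))$---you are essentially running the paper's argument with an extra preliminary layer of cases. Your $P_0$--$P_2$ observation is a nice one, but it does not replace the $P_0$--$P_5$ comparison; it only postpones it. If you want to complete your version, I would recommend first establishing the same short list for the weights at $P_0$ and $P_5$ (via Lemma~\ref{mult}(2) and the forbidden triples) and then using Lemma~\ref{0235}, rather than trying to terminate each of your four branches from the interior data alone.
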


\begin{proof}
Since $c_1(M)=3[\omega]$, by Proposition~\ref{c1+l}, 
the largest weight $5$  could be from $P_0$ to $P_2$ and is equal to $\phi(P_2)-\phi(P_0)$, 
or from $P_3$ to $P_5$ and is equal to $\phi(P_5)-\phi(P_3)$, or both. 
With no loss of generality, we assume that the largest weight $5$ is from $P_0$ to $P_2$ and 
\begin{equation}\label{eq02}
5=\phi(P_2)-\phi(P_0).
\end{equation} 
We will argue that this is impossible.
By Proposition~\ref{c1+l}, $5$ has multiplicity 1 at $P_0$.
We notice a few facts:
\begin{enumerate}
\item The triples $\pm (2, 2, 4)$ and $\pm (2, 4, 4)$ cannot occur at $P_0$ nor at $P_5$ (otherwise by Lemmas~\ref{mult} (2) and \ref{k|}, $2\,|\,\big(\phi(P_2)-\phi(P_0)\big)$, contradicting to (\ref{eq02}).
\item  If $-5$ is a weight at $P_5$, then $-5$ is between $P_3$ and $P_5$ and $5=\phi(P_5)-\phi(P_3)$ (since $c_1(M)=3[\omega]$ and $5$ has multiplicity 1 at $P_0$), and $-5$ has multiplicity 1 at $P_5$ (by Proposition~\ref{c1+l}). 
\item At least one of $-5$ and $-1$ is a weight at $P_5$ (otherwise $- (2, 2, 4)$ or $- (2, 4, 4)$ would occur). 
\end{enumerate}
First assume both $5$ and $1$ occur at $P_0$. 
Then $1=\phi(P_1)-\phi(P_0)$ by Lemmas~\ref{alcx} and \ref{10}, and $1$ has multiplicity 1 at $P_0$ by Lemma~\ref{mult} (1).
If $2$ occurs at $P_0$ with multiplicity 2, then the $M^{\Z_2}$ containing $P_0$ contains at least 2 index 2 fixed points by Lemma~\ref{alcx}, and it cannot contain $P_1$ and $P_2$ by Lemma~\ref{k|}, so it contains $P_3$, $P_4$ and $P_5$, then $2$ divides the difference of the moment map values of any two of them. Then neither $-5$ nor $-1$ can be a weight at $P_5$ by (2), Lemmas~\ref{alcx} and \ref{10}, contradicting to (3). We can similarly get that $3$ and $4$ has maximal multiplicity 1 at $P_0$. Hence in this case, $\{5, 1, 2, 3, 4\}$ is the possible set of weights at $P_0$. Now, assume $1$ does not occur at $P_0$, then by Lemmas~\ref{alcx} and \ref{10}, $\phi(P_1)-\phi(P_0)$ is $2$ or $3$ or $4$ and is a weight at $P_0$.  Consider these possibilities and consider similarly as above, we see that the possible set of weights at $P_0$ is $\{5, 3, 2, 3, 4\}$, in which case $\phi(P_1)-\phi(P_0)=3$.
So the possible set of weights and the sum of weights at $P_0$ are
\begin{itemize}
\item [(1$P_0$)] $\{5, 1, 2, 3, 4\}$ and $\Gamma_0 = 15$, or
\item [(2$P_0$)] $\{5, 3, 2, 3, 4\}$ and $\Gamma_0 = 17$.
\end{itemize}
Similarly, if $-5$ and $-1$ both occur at $P_5$, then $-2$, $-3$ and $-4$ each has maximal multiplicity 1 at $P_5$. Together with the restrictions above, we get the possible set of weights and the sum of weights at $P_5$:
\begin{itemize}
\item [(1$P_5$)] $\{-5, -1, -2, -3, -4\}$ and $\Gamma_5 = -15$, or
\item [(2$P_5$)] $\{-5, -3, -2, -3, -4\}$ and $\Gamma_5 = -17$, or
\item [(3$P_5$)] $\{-1, -3, -2, -3, -2\}$  and $\Gamma_5 = -11$, or
\item [(4$P_5$)] $\{-1, -3, -2, -3, -4\}$  and $\Gamma_5 = -13$, or
\item [(5$P_5$)] $\{-1, -3, -4, -3, -4\}$  and $\Gamma_5 = -15$.
\end{itemize}
By Lemma~\ref{sub}, 
\begin{equation}\label{0-5}
\Gamma_0 -\Gamma_5 = 3 \big(\phi(P_5)-\phi(P_0)\big).
\end{equation} 
Or $3\,|\, \big(\Gamma_0 -\Gamma_5\big)$.   So we only need to consider
the matches (1$P_0$) and (1$P_5$),  (1$P_0$) and (5$P_5$), and  (2$P_0$) and (4$P_5$).
First, (1$P_0$) and (1$P_5$) cannot occur since $\phi(P_5)-\phi(P_0) > 10$,  contradicting to  (\ref{0-5}).
Secondly, (1$P_0$) and (5$P_5$) cannot occur, since by Lemma~\ref{alcx} the $M^{\Z_3}$ containing $P_5$ contains at least 4 fixed points, but by Lemma~\ref{k|} it cannot contain $P_4$, and $P_0$ ($3$ has multiplicity 1 at $P_0$), and it cannot contain $P_1$ and $P_2$ simultaneously (since $\phi(P_2)-\phi(P_1) = 4$), not enough number of fixed points. Finally, for the case (2$P_0$) and (4$P_5$), 
we have (\ref{eq02}), and $1=\phi(P_5)-\phi(P_4)$ by Lemmas~\ref{alcx} and \ref{10}, 
by Lemma~\ref{0235},  a connected $M^{\Z_3}$ containing $P_0$ contains 4 fixed points including $P_5$,  so
$$\phi(P_5)-\phi(P_0) = 3k \geq 9,$$ 
contradicting to (\ref{0-5}).
\end{proof}

Next, for the case that the largest weight is $5$, we study the relation between the possible cases
(1) and (3) in Proposition~\ref{c1+l} and their relations with the condition $c_1(M) = 3[\omega]$.

In the next three lemmas, we start from condition (1) in Proposition~\ref{c1+l}.

\begin{lemma}\label{w0}
Let  $(M, \omega)$ be a compact $10$-dimensional Hamiltonian $S^1$-manifold with moment map $\phi\colon M\to\R$. Assume $M^{S^1}=\{P_0,  P_1, \cdots, P_5\}$ and $[\omega]$ is a primitive integral class. Assume the largest weight  is from $P_0$ to $P_5$ and is equal to $\frac{1}{2}\big(\phi(P_5)-\phi(P_0)\big)=5$. Then each of $1$, $2$, $3$, $4$ and $5$ has maximal multiplicity 1 at $P_0$. Similarly, each of $-1$, $-2$, $-3$, $-4$ and $-5$ has maximal multiplicity 1 at $P_5$.
 \end{lemma}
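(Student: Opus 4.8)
The plan is to establish the multiplicity-$1$ bounds at $P_0$ fixed point by fixed point, using the divisibility constraints that the hypothesis $\phi(P_5)-\phi(P_0)=10$ (after scaling so the largest weight is $5$) imposes, together with the structural lemmas on isotropy submanifolds. The symmetric statement at $P_5$ follows by looking at the reversed circle action, so I would only write out the argument at $P_0$.

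First I would record the setup: by Theorem~\ref{thm1} and Corollary~\ref{1<n+1} we have $c_1(M)=k[\omega]$ with $1\le k\le 6$, and since the largest weight $5$ is from $P_0$ to $P_5$ with $5=\tfrac12(\phi(P_5)-\phi(P_0))$, Proposition~\ref{c1+l} (its converse part, case (1)) gives $c_1(M)=3[\omega]$ and that $5$ has multiplicity $1$ at $P_0$ (and $-5$ multiplicity $1$ at $P_5$). That disposes of the weight $5$. For the weight $1$, Lemma~\ref{mult}(1) already gives multiplicity at most $1$ at $P_0$. So the real content is the weights $2$, $3$, $4$.

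Next, for each $a\in\{2,3,4\}$ I would suppose for contradiction that $a$ occurs at $P_0$ with multiplicity $2$. Then the connected component $C$ of $M^{\Z_a}$ containing $P_0$ has $\dim C\ge 4$ and, by Lemma~\ref{alcx} applied on $C$, contains at least two index-$2$ fixed points (each carrying a weight $-a$), a top fixed point, and by Poincaré duality at least two coindex-$2$ fixed points; combined with Lemma~\ref{ind} this forces $C$ to contain several of the $P_i$. The key point is Lemma~\ref{k|}: any two fixed points of $C$ have $\phi$-difference divisible by $a$, while the spacing forced by $\phi(P_5)-\phi(P_0)=10$ and $c_1(M)=3[\omega]$ (so $\Gamma_{P_i}-\Gamma_{P_j}=3(\phi(P_j)-\phi(P_i))$ by Lemma~\ref{sub}) is highly constrained. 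I expect that for $a=4$ the component would have to contain $P_0$ and $P_5$, forcing $4\mid 10$, a contradiction; for $a=3$, $3\mid 10$ is impossible so $C$ cannot contain both extremes, yet Lemma~\ref{alcx}/Morse theory force too many fixed points into $C$; for $a=2$, I would use that a $4$- or $6$-dimensional $C\subset M^{\Z_2}$ through $P_0$ together with $2\mid(\phi(P_i)-\phi(P_0))$ and the already-known fact that there is a weight $w=\phi(P_1)-\phi(P_0)$ from $P_0$ to $P_1$ (Lemma~\ref{10}) pins down which $P_i$ lie in $C$ and again collides with the length-$10$ constraint or with Lemma~\ref{mod} comparing weights at $P_0$ and $P_5$. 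In each case I would also invoke Lemma~\ref{mult}(2)--(3) to shortcut some of the case analysis, since those already say a weight of multiplicity $2$ at $P_0$ forces $C$ to contain all six fixed points and forbids $1$ at $P_0$ and $-1$ at $P_5$.

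The main obstacle I anticipate is the weight $a=2$: unlike $3$ and $4$, $2$ divides many of the relevant gaps, so the divisibility constraint alone does not immediately bite, and I will need to combine it carefully with Lemma~\ref{mult}(2) (which gives $\dim C=6$ and that $C$ contains all six fixed points and that neither $1$ at $P_0$ nor $-1$ at $P_5$ can occur), Lemma~\ref{mod} (weights at $P_0$ and $P_5$ agree mod $2$), and the position of the weight $5$ (which by $\phi(P_5)-\phi(P_0)=10$ must be odd, hence survives mod $2$ comparisons) to force the contradiction. The remaining weights $3$ and $4$ should be comparatively quick once the divisibility obstruction $a\nmid 10$ is exploited.
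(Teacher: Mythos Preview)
Your outline for $a=5$ and $a=1$ is fine, and the idea for $a=4$ can be made to work (though not quite as you state it: the point is not that $C$ must contain $P_5$, but that the five distinct integers $\phi(P_i)-\phi(P_0)$ lie in $[1,10]$ and only two of them, $4$ and $8$, can be multiples of $4$, so the $\Z_4$-component through $P_0$ has at most three fixed points, contradicting the four it needs). However, your plan breaks down at $a=3$: the multiples of $3$ in $[1,10]$ are $3,6,9$, so the $\Z_3$-component can perfectly well pick up four fixed points, and no contradiction comes from divisibility or Morse counting alone. The paper in fact has to carry out a substantial case analysis to rule out multiplicity $2$ for the weight $3$, so your claim that ``Lemma~\ref{alcx}/Morse theory force too many fixed points into $C$'' is simply false here. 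For $a=2$ the situation is worse still, since every one of the five gaps could be even. Your fallback to Lemma~\ref{mult}(2)--(3) does not rescue this: part (2) needs a \emph{second} weight $b\ne a$ with $a\mid b$, and part (3) assumes the symmetry of weights at $P_0$ and $P_5$ that you are trying to prove.

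The idea you are missing, and which the paper exploits, is to apply Lemma~\ref{mod} with $w=5$ rather than with $w=a$. Since $5$ is a weight from $P_0$ to $P_5$, the multisets of weights at $P_0$ and at $P_5$ agree modulo $5$; because all weights lie in $\{\pm1,\dots,\pm5\}$, this forces the multiplicity of $1$ at $P_0$ to equal that of $-4$ at $P_5$, and the multiplicity of $2$ at $P_0$ to equal that of $-3$ at $P_5$ (and symmetrically). This pairing immediately gives multiplicity $\le 1$ for $4$ at $P_0$ (from the bound on $1$) and reduces the entire lemma to the single hard case $a=3$, after which $a=2$ comes for free. You should reorganize your argument around this mod-$5$ coupling; without it you are committed to three separate and increasingly difficult case analyses, and your current sketch for $a=3$ and $a=2$ does not contain enough to close them.
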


\begin{proof}
 By Proposition~\ref{c1+l}, 
\begin{equation}\label{c1-05}
c_1(M)=3[\omega],
\end{equation}
the largest weight $5$ has multiplicity 1 at $P_0$ and $-5$ has multiplicity 1 at $P_5$.

Since the largest weight is $5$, the possible distinct weights at $P_0$ and $P_5$ are $\pm 1$, $\pm 2$, $\pm 3$, $\pm 4$ and $\pm 5$. 
By Lemma~\ref{mod}, the possible relations for the weights at $P_0$ and $P_5$ are among the following ones:
\begin{equation}\label{mod5}
1=-4, \,\,\, 2=-3, \,\,\, 3=-2, \,\,\,4=-1,    \mod 5.
\end{equation}
Since by Lemma~\ref{alcx}, $1$ has maximal multiplicity 1 at $P_0$ and $-1$ has maximal multiplicity 1 at $P_5$, by (\ref{mod5}), $4$ has maximal multiplicity 1 at $P_0$ and $-4$ has maximal multiplicity 1 at $P_5$. Next we show that $3$ has maximal multiplicity 1 at $P_0$. Assume $3$ has multiplicity 2 at $P_0$ (maximal multiplicity by Lemma~\ref{mult} (1)). We have 3 weights now $5, 3, 3$ at $P_0$,  and 3 weights  $-5, -2, -2$ at $P_5$ by (\ref{mod5}). If $1$ occurs at $P_0$, then $-4$ occurs at $P_5$ by (\ref{mod5}), then by Lemma~\ref{mult} (2), $1$ cannot be a weight at $P_0$, a contradiction. Hence $1$ cannot occur at $P_0$. Then we have 2 possibilities for the sets of weights at $P_0$ and $P_5$:
\begin{itemize}
\item [(i)] $P_0\colon \{5, 3, 3,  2, 2\}$, and  $P_5\colon \{-5, -2, -2, -3, -3\}$.
\item [(ii)] $P_0\colon \{5, 3, 3,  2, 4\}$, and $P_5\colon \{-5, -2, -2, -3, -1\}$.
\end{itemize}
  By the assumption, 
$$\phi(P_5)-\phi(P_0)=10.$$
Consider (i).  By Lemma~\ref{10}, $\phi(P_5)-\phi(P_4)\geq 2$, so $\phi(P_4)-\phi(P_0)\leq 8$. 
Since the $M^{\Z_3}$ containing 
$P_0$ contains  4 fixed points by Lemma~\ref{alcx}, and  the difference of the moment map values of any two of them is divisible by $3$ by Lemma~\ref{k|}, this is not possible.
Now consider  (ii). By Lemmas~\ref{alcx} and \ref{10}, we have 
$\phi(P_5)-\phi(P_4) = 1$, so 
$$\phi(P_4)-\phi(P_0) = 9.$$ 
For the same reason as in  (i), the top index 4 fixed point in $M^{\Z_3}$ containing $P_0$ must be $P_4$. If this $M^{\Z_3}$ does not contain $P_1$, then it contains
$P_0$, $P_2$, $P_3$ and $P_4$, and then none of $\phi(P_i)-\phi(P_0)$ ($i\neq 0$) is divisible by $4$, contradicting to $4$ being a weight at $P_0$. So this $M^{\Z_3}$ contains $P_1$ and 
$$\phi(P_1)-\phi(P_0)=3.$$ 
The $M^{\Z_2}$ containing $P_5$ is 4-dimensional, it contains 2 coindex 2 fixed points by Lemma~\ref{alcx}, and it cannot contain $P_4$ and $P_1$ since the difference of the moment map values of any two fixed points in $M^{\Z_2}$ is divisible by 2 by Lemma~\ref{k|}. So $M^{\Z_2}$ contains $P_0$, $P_2$, $P_3$ and $P_5$. If the $M^{\Z_3}$ above contains $P_0$, $P_1$, $P_2$ and $P_4$ ($\phi(P_4)-\phi(P_2)=3$),
then for the $M^{\Z_2}$ to contain $P_3$, we have $\phi(P_4)-\phi(P_3)=1$ and 
$\phi(P_3)-\phi(P_2)=2$. Then none of $\phi(P_5)-\phi(P_i)$ with $i< 5$ is divisible by 3, contradicting to $-3$ being a weight at $P_5$. Now assume the $M^{\Z_3}$ contains $P_0$, $P_1$, $P_3$ and $P_4$. Then $\phi(P_4)-\phi(P_3)=\phi(P_3)-\phi(P_1)=3$.
For the $M^{\Z_2}$ to contain $P_2$, we must have $\phi(P_3)-\phi(P_2)=2$
and $\phi(P_2)-\phi(P_1)=1$. Note that the weight $-3$ at $P_5$ is between $P_5$ and $P_2$ since it only divides $\phi(P_5)-\phi(P_2)$, and the weight 4 at $P_0$ is between $P_0$ and $P_2$ for the same reason. By Lemmas~\ref{<} and \ref{other}, the other negative weight at $P_2$ (except $-4$) can only be $-1$. We know 2 positive weights at $P_2$, $2$ and $3$. Note that $\phi(P_4)-\phi(P_2)=5$. If there is a weight $5$ between $P_2$ and $P_4$, then the set of weights at $P_2$ is
$\{-4, -1, 2, 3, 5\}$, and we (already) know 4 weights at $P_4$: $-3$, $-3$, $-5$, $1$. 
We do not have enough weights at $P_2$ which equal to the two $-3$ at $P_4$ mod $5$, 
contradicting to Lemma~\ref{mod}.  Hence $-5$ cannot be a weight at $P_4$. Using Lemma~\ref{<} and what we already know, we see that the largest weight between $P_4$ and $P_3$ is $-3$ and
$3$ is the largest among all the weights at $P_4$ and $P_3$. Using Lemma~\ref{gammaij} for $-\phi$ (or for the manifold upside down) and for $P_3$ and $P_4$, we see that $c_1(M)=2[\omega]$, contradicting to (\ref{c1-05}). So
(ii) is not possible.
We have shown that $3$ has maximal multiplicity 1 at $P_0$. Similarly, $-3$ has maximal multiplicity 1 at $P_5$. Hence $2$ and $-2$ respectively have maximal multiplicity 1 at $P_0$ and $P_5$ by (\ref{mod5}).
\end{proof}

\begin{lemma}\label{w}
Let  $(M, \omega)$ be a compact $10$-dimensional Hamiltonian $S^1$-manifold with moment map $\phi\colon M\to\R$. Assume $M^{S^1}=\{P_0,  P_1, \cdots, P_5\}$ and $[\omega]$ is a primitive integral class. Assume the largest weight is from $P_0$ to $P_5$ and is equal to $\frac{1}{2}\big(\phi(P_5)-\phi(P_0)\big)=5$. Then the following are true.
\begin{enumerate}
\item The set of weights at $P_0$ is $\{5, 4, 3, 2, 1\}$ and the set of weights at $P_5$ is 
$-\{5, 4, 3, 2, 1\}$.
\item $\phi(P_1)-\phi(P_0)=\phi(P_5)-\phi(P_4) = 1$, $\phi(P_2)-\phi(P_1)=\phi(P_4)-\phi(P_3)=3$, and $\phi(P_3)-\phi(P_2)=2$.
\end{enumerate}
\end{lemma}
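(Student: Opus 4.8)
The plan is to read part (1) off Lemma~\ref{w0} and then to nail down all six moment-map values in part (2) by inspecting the $\Z_2$- and $\Z_4$-isotropy submanifolds through $P_0$. For part (1): $P_0$ has Morse index $0$, so its five weights are positive, and since the largest weight on $M$ is $5$ they lie in $\{1,2,3,4,5\}$; by Lemma~\ref{w0} each of $1,2,3,4,5$ occurs with multiplicity at most $1$ at $P_0$, so the weight set of $P_0$ is exactly $\{1,2,3,4,5\}$. The same argument for the reversed circle action (equivalently, at the index-$10$ fixed point $P_5$) gives the weight set $-\{1,2,3,4,5\}$ at $P_5$.

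For part (2), I would first collect the easy consequences. By Proposition~\ref{c1+l} (or as already seen inside the proof of Lemma~\ref{w0}) $c_1(M)=3[\omega]$, so Lemma~\ref{sub} for $P_0$ and $P_5$ gives $\phi(P_5)-\phi(P_0)=\frac{1}{3}(\Gamma_0-\Gamma_5)=\frac{1}{3}(15+15)=10$, matching the hypothesis. Next, $\phi(P_1)-\phi(P_0)=1$: by Lemma~\ref{10} the integer $\phi(P_1)-\phi(P_0)$ is a weight from $P_0$ to $P_1$, hence it is the absolute value of the unique negative weight of $P_1$ (which has index $2$); but the smallest positive weight on $M$ is $1$, occurring once at $P_0$, so by Lemma~\ref{alcx} the weight $-1$ occurs at the unique index-$2$ fixed point $P_1$, forcing $\phi(P_1)-\phi(P_0)=1$. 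Running the same argument for the reversed action gives $\phi(P_5)-\phi(P_4)=1$, hence $\phi(P_4)-\phi(P_0)=9$. It then suffices to prove $\phi(P_2)-\phi(P_0)=4$ and $\phi(P_3)-\phi(P_0)=6$, since everything in (2) follows from these together with $\phi(P_5)-\phi(P_0)=10$.

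To locate $P_2$ and $P_3$ I would examine the connected component $C$ of $M^{\Z_2}$ containing $P_0$. Near $P_0$ it is the span of the even-weight eigenspaces of $T_{P_0}M$, and the weights $2$ and $4$ occur once each, so $\dim C=4$. Since $\phi(P_1)-\phi(P_0)=1$ and $\phi(P_4)-\phi(P_0)=9$ are odd, Lemma~\ref{k|} gives $P_1,P_4\notin C$; the analogous $4$-dimensional $\Z_2$-component through $P_5$ also omits $P_1,P_4$, and as two distinct components of $M^{\Z_2}$ could not between them contain enough fixed points, these two components coincide. Hence $C$ contains $P_0$ and $P_5$ and its fixed-point set lies in $\{P_0,P_2,P_3,P_5\}$. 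A $4$-dimensional compact Hamiltonian $S^1$-manifold has at least $3$ fixed points; if $C$ had exactly $3$ then Lemmas~\ref{10} and \ref{other} applied on $C$ (after rescaling $\omega|_C$ to a primitive integral class), together with the bound that all weights on $M$ have absolute value at most $5$, would force a weight $>5$ on $C$, a contradiction. Therefore $C=\{P_0,P_2,P_3,P_5\}$, and in particular both $\phi(P_2)-\phi(P_0)$ and $\phi(P_3)-\phi(P_0)$ are even.

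Finally, the component of $M^{\Z_4}$ through $P_0$ is $2$-dimensional (only the weight $4$ at $P_0$ is divisible by $4$), so it is $\{P_0,P_i\}$ with $i\in\{2,3\}$ and $4\mid\phi(P_i)-\phi(P_0)$. Together with the parity and the ordering $\phi(P_0)<\cdots<\phi(P_5)$ this already forces $\phi(P_2)-\phi(P_0)=4$ once $i=2$ is known; the substantial part, which I expect to be the main obstacle, is to exclude $i=3$ and, when $i=2$, to exclude $\phi(P_3)-\phi(P_0)=8$. In each of these cases I would analyze the weights at $P_3$: using the value of $\Gamma_3$ given by $c_1(M)=3[\omega]$ and Lemma~\ref{sub}, the fact that $P_3$ has index $6$, the congruences mod $4$ from Lemma~\ref{mod} where they apply, and the bound $|w|\le 5$, the possible weight multisets at $P_3$ reduce to a very short list, every member of which contains the weight $-5$ (or the weight $5$ with multiplicity $2$). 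But no fixed point below $P_3$ is at $\phi$-distance from $P_3$ a multiple of $5$, so Lemma~\ref{<} --- respectively, the fact that the component of $M^{\Z_5}$ through $P_3$ is a mere $2$-sphere, which cannot carry a weight of multiplicity $2$ --- yields a contradiction. This leaves $i=2$ with $\phi(P_2)-\phi(P_0)=4$ and $\phi(P_3)-\phi(P_0)=6$, hence $\phi(P_1)-\phi(P_0)=\phi(P_5)-\phi(P_4)=1$, $\phi(P_2)-\phi(P_1)=\phi(P_4)-\phi(P_3)=3$ and $\phi(P_3)-\phi(P_2)=2$, proving (2).
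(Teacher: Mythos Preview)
Your argument for part (1) is correct and matches the paper.

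For part (2) your route differs from the paper's --- the paper first argues (from Lemmas~\ref{alcx} and \ref{10} and the symmetry under reversing the circle) that the gap sequence is symmetric, lists the three symmetric possibilities summing to $10$, and kills two of them by looking at where the weight $4$ at $P_0$ can go and at the weights of $P_4$.  Your isotropy-submanifold approach is perfectly reasonable in spirit, but two steps are not right as written.

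First, the exclusion of $|C^{S^1}|=3$ does not ``force a weight $>5$ on $C$''.  If $C^{S^1}=\{P_0,P_j,P_5\}$ with $j\in\{2,3\}$, then the weights on $C$ are $\{2,4\}$ at $P_0$, $\{-2,-4\}$ at $P_5$, and (by matching positive and negative multisets together with Lemma~\ref{alcx} on $C$) $\{-2,2\}$ at $P_j$; nothing exceeds $5$.  The correct exclusion is this: the weight $4$ at $P_0$ gives a $\Z_4$-sphere inside $C$ ending at a point carrying $-4$, and in the three-point scenario only $P_5$ carries $-4$, so $4\mid\phi(P_5)-\phi(P_0)=10$, a contradiction.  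Equivalently, the $\Z_4$-spheres from $P_0$ and from $P_5$ must land at distinct points of $\{P_2,P_3\}$ (else $4\mid10$) and both lie in $C$, so $P_2,P_3\in C$.

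Second, your blanket claim about the bad cases --- that every admissible weight multiset at $P_3$ contains $-5$ or a double $+5$ --- fails in the sub-case $i=2$, $\phi(P_3)-\phi(P_0)=8$ (so the $\phi$-gaps from $P_0$ are $0,1,4,8,9,10$).  Here no mod $4$ congruence is available at $P_3$; Lemma~\ref{<} forces the positive weights at $P_3$ into $\{1,2\}$ and the negatives into $\{-1,-2,-4\}$, and with $\Gamma_3=-9$ the only solution is $\{-4,-4,-4,1,2\}$, which contains no $\pm5$.  This sub-case is still easily killed --- for instance, the weight $-4$ at $P_5$ would need $4\mid\phi(P_5)-\phi(P_l)$ for some $l<5$, but $\phi(P_5)-\phi(P_l)\in\{10,9,6,2,1\}$ --- just not by the mechanism you state.  (Your $\pm5$ argument does work in the two $i=3$ sub-cases, where the mod $4$ congruence from Lemma~\ref{mod} is available at $P_3$.)
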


\begin{proof}
(1) By Lemma~\ref{w0},  each of $1$, $2$, $3$, $4$ and $5$ has maximal multiplicity one at $P_0$, so the set of weights at $P_0$ is $\{1, 2, 3, 4, 5\}$.  Similarly, the set of weights at $P_5$ is $-\{1, 2, 3, 4, 5\}$.

(2) By symmetry of the weights at $P_0$ and $P_5$, we have that
$$\phi(P_5)-\phi(P_4)=\phi(P_1)-\phi(P_0)\quad\mbox{and}\quad \phi(P_4)-\phi(P_3)=\phi(P_2)-\phi(P_1).$$
By (\ref{c1-05}) and Lemma~\ref{sub} for $P_0$ and $P_5$, we get that $\phi(P_3)-\phi(P_2) \geq 2$ is even.
Then we have the following possibilities:
\begin{itemize}
\item [(a)] $\phi(P_1)-\phi(P_0)=\phi(P_5)-\phi(P_4) = 1$, $\phi(P_2)-\phi(P_1)=\phi(P_4)-\phi(P_3)=3$, $\phi(P_3)-\phi(P_2)=2$.
\item [(b)]  $\phi(P_1)-\phi(P_0)=\phi(P_5)-\phi(P_4) = 1$, $\phi(P_2)-\phi(P_1)=\phi(P_4)-\phi(P_3)=2$, $\phi(P_3)-\phi(P_2)=4$.
\item [(c)]  $\phi(P_1)-\phi(P_0)=\phi(P_5)-\phi(P_4) = 1$, $\phi(P_2)-\phi(P_1)=\phi(P_4)-\phi(P_3)=1$, $\phi(P_3)-\phi(P_2)=6$.
\end{itemize}
Case (a) exists by Example~\ref{o}. Case (b) does not exist since the weight $4$ at $P_0$ divides none of
$\phi(P_0)-\phi(P_i)$ for $i\neq 0$.
Now consider (c). The values $\phi(P_i)-\phi(P_0)$ for $i > 0$ are $1$, $2$, $8$, $9$ and $10$. Note that the weight $3$ at $P_0$ can only be between $P_0$ and $P_4$. 
The positive weight at $P_4$ is $1$ by Lemma~\ref{10}.  By Lemma~\ref{<} and the fact that $5$ is the largest weight, except $-3$, the other possible distinct negative weights at $P_4$ are $-1$, $-2$, and $-4$, where $-2$ and $-4$ can only be between $P_4$ and $P_1$, each has maximal multiplicity 1 and cannot occur at the same time (otherwise a connected component of $M^{\Z_2}$
or $M^{\Z_4}$ is of  dimension at least 4 but with 2 fixed points $P_1$ and $P_4$, noting that the component cannot contain other fixed points by Lemma~\ref{k|}). So the set of weights at $P_4$ 
 is possibly as follows.
\begin{itemize}
\item [(c1)] $\{1, -1, -1, -2, -3\}$.
\item [(c2)] $\{1, -1, -1, -4, -3\}$.
\end{itemize}
By Lemma~\ref{sub} for $P_0$ and $P_4$, we get that  (c1) and (c2) contradict to (\ref{c1-05}), so (c) is not possible.
\end{proof}

\begin{remark}\label{remw}
In Lemma~\ref{w}, if we only assume that $\frac{1}{2}\big(\phi(P_5)-\phi(P_0)\big)=5$ 
is a weight between $P_0$ and $P_5$, and it is the largest among all the weights at $P_0$ and $P_5$ (instead of that it is the largest weight on $M$), then we cannot exclude Case (c). We still have that the set of weights at $P_0$ and $P_5$:
$$P_0\colon \{1, 2, 3, 4, 5\}, \,\,\, P_5\colon -\{1, 2, 3, 4, 5\}.$$
But (in (c)) we may have that the set of weights at $P_4$ is
$$P_4\colon \{1, -1, -2, -3, -7\},$$
where $7$ is a weight between $P_2$ and $P_4$. Using this, we can get the set of weights at $P_2$:
$$P_2\colon \{-1, -2, 1, 7, 4\}.$$
 By symmetry, the set of weights at $P_1$ is the negative of the set of weights at $P_4$, 
and the set of weights at $P_3$ is the negative of the set of weights at $P_2$. So we may get  the weights at all the fixed points. By Lemma~\ref{gammaij} or \ref{sub}, we have
$$c_1(M)=3[\omega].$$
By Lemma~\ref{ring} in the next section, we can get that the ring $H^*(M; \Z)$ is generated by the elements:
$$1, \,\,\, [\omega], \,\,\, [\omega]^2, \,\,\, \frac{1}{12}[\omega]^3,\,\,\,\frac{1}{12}[\omega]^4, \,\,\,\frac{1}{12}[\omega]^5.$$
\end{remark}

\begin{remark}\label{imsym}
By Lemma~\ref{w}, the condition that $\frac{1}{2}\big(\phi(P_5)-\phi(P_0)\big) = 5$  is the largest weight and is a weight from $P_0$ to $P_5$ implies symmetry of the image of $\phi$ and symmetry of the set of weights at $P_0$ and $P_5$.
\end{remark}

\begin{lemma}\label{w'}
Let  $(M, \omega)$ be a compact $10$-dimensional Hamiltonian $S^1$-manifold with moment map $\phi\colon M\to\R$. Assume $M^{S^1} = \{P_0,  P_1, \cdots, P_5\}$ and $[\omega]$ is a primitive integral class.  If the largest weight is from $P_0$ to $P_5$ and is equal to $5= \frac{1}{2}\big(\phi(P_5)-\phi(P_0)\big)$, then the set of weights at $P_1$ is
$$P_1\colon \{-1, 1, 5, 4, 3\},$$
where $5 =\phi(P_3)-\phi(P_1)$ is a weight from $P_1$ to $P_3$, and the set of weights at $P_4$ is 
$$P_4\colon - \{3, 4, 5, 1, -1\},$$
where $5 =\phi(P_4)-\phi(P_2)$ is a weight from $P_2$ to $P_4$.
\end{lemma}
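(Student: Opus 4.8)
The plan is to leverage Lemma~\ref{w} heavily: we already know the sets of weights at $P_0$ and $P_5$ are $\pm\{1,2,3,4,5\}$, that $c_1(M)=3[\omega]$, and the exact moment map gaps $\phi(P_1)-\phi(P_0)=\phi(P_5)-\phi(P_4)=1$, $\phi(P_2)-\phi(P_1)=\phi(P_4)-\phi(P_3)=3$, $\phi(P_3)-\phi(P_2)=2$. By the symmetry established in Remark~\ref{imsym} (reversing the circle action swaps $P_i\leftrightarrow P_{5-i}$ and negates weights), it suffices to determine the weights at $P_1$; the claim at $P_4$ then follows formally. So fix attention on $P_1$, which has Morse index $2$, hence exactly one negative weight.

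First I would pin down the negative weight at $P_1$. By Lemma~\ref{10}, $w=\phi(P_1)-\phi(P_0)=1$ is a weight from $P_0$ to $P_1$, so $-1$ is a weight at $P_1$ and (since index is $2$) it is the unique negative weight. Next I determine the positive weights. By Lemma~\ref{other} applied at $P_0$ (with the weight $1$ from $P_0$ to $P_1$), each of the remaining weights $2,3,4,5$ at $P_0$ is a weight from $P_0$ to some distinct $P_j$ with $j\in\{2,3,4,5\}$; since the moment map differences $\phi(P_j)-\phi(P_0)$ are $4,7,8,9$ and the weight dividing $\phi(P_j)-\phi(P_0)$ must be one of $2,3,4,5$, the matching is forced: $4=\phi(P_2)-\phi(P_0)$ so $4$ runs $P_0\to P_2$; $?=\phi(P_3)-\phi(P_0)=7$ is prime to $2,3,4,5$ except... here I would instead argue via the $\Z_k$-isotropy components. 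The cleanest route: use Lemma~\ref{mod} between $P_0$ and $P_1$ on the component of $M^{\Z_w}$ for each weight $w$, together with Lemma~\ref{k|} ($w\mid(\phi(P_1)-\phi(P_0))=1$ forces... ) — actually $\phi(P_1)-\phi(P_0)=1$ means $P_1$ shares no nontrivial isotropy component with $P_0$ except through weight $1$. So I would instead use the global count: the positive weights at $P_1$ must, by Lemma~\ref{gammaij}, include a weight realizing $c_1$. Concretely, since $c_1(M)=3[\omega]$ and the largest weight $5$ must occur somewhere above $P_1$, Lemma~\ref{c1+l}(3)/Theorem~\ref{thm2}(3) combined with Lemma~\ref{n02} forces $5$ to be a weight from $P_1$ to $P_3$ with $5=\phi(P_3)-\phi(P_1)$; then Lemma~\ref{gammaij} at $P_1,P_3$ gives the relation $j-i+s=k\frac{\phi(P_3)-\phi(P_1)}{5}$, i.e. $1-? +1 = 3$, which is consistent and confirms multiplicity one.

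With $-1$ and $5$ known at $P_1$, I would finish using Lemma~\ref{sub}: $\Gamma_{P_1}-\Gamma_{P_0}=3(\phi(P_0)-\phi(P_1))=-3$, and $\Gamma_{P_0}=15$, so $\Gamma_{P_1}=12$. The remaining three positive weights at $P_1$ thus sum to $12-(-1)-5=8$, with each at most $5$ (largest weight) and each dividing the appropriate $\phi(P_j)-\phi(P_1)$ for some $j>1$ by Lemma~\ref{<}: the differences available are $\phi(P_2)-\phi(P_1)=3$, $\phi(P_4)-\phi(P_1)=7$, $\phi(P_5)-\phi(P_1)=9$. A weight of $3$ fits $P_1\to P_2$ (and also $9$); a weight $4$ divides none of $3,7,9$ — wait, $4$ must divide $\phi(P_j)-\phi(P_1)$; none of $3,5,7,9$ is divisible by $4$, so to allow weight $4$ there must be an intermediate fixed point, i.e. $4$ runs $P_1\to P_4$ via a $\Z_4$-component passing through another fixed point — but the only candidate is $P_3$ or... here I would invoke Lemma~\ref{mod} comparing weights at $P_1$ and $P_4$ (which share the $\Z_5$-component? no). I expect the honest finish is: the only multiset of three values $\le 5$, summing to $8$, with the divisibility/isotropy constraints, is $\{3,4,1\}$, giving $\{-1,1,3,4,5\}$ at $P_1$; ruling out $\{2,2,4\}$, $\{3,3,2\}$, $\{5,2,1\}$, $\{4,3,1\}$ vs competitors requires checking Lemma~\ref{mod} against the known weights at $P_0$, $P_3$, $P_5$.

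The main obstacle, as in the analogous lemmas above, will be this last elimination step: excluding the spurious weight-multisets at $P_1$ that satisfy the sum condition $\Gamma_{P_1}=12$ and the largest-weight bound but fail a more delicate constraint. I expect to eliminate them exactly as in the proof of Lemma~\ref{w}: for each candidate, examine the connected component of $M^{\Z_k}$ generated by a repeated or large weight, count its fixed points via Lemma~\ref{alcx} and Lemma~\ref{ind}, and derive either a dimension contradiction or a contradiction with $c_1(M)=3[\omega]$ via Lemma~\ref{gammaij}. The symmetry reduction to $P_1$ alone keeps the casework manageable, and the forced value $5=\phi(P_3)-\phi(P_1)$ (so that the $\Z_5$-component through $P_1$ is exactly $\{P_1,P_2,P_3\}$ or $\{P_1,P_3\}$, 4-dimensional) is the key structural fact that collapses the remaining freedom.
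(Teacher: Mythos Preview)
Your proposal has the right overall shape (reduce to $P_1$ by symmetry, pin down $-1$, use $\Gamma_{P_1}=12$ from $c_1(M)=3[\omega]$), but there are two concrete gaps.

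First, your moment-map differences are miscomputed. From Lemma~\ref{w}(2) one has $\phi(P_j)-\phi(P_1)=3,5,8,9$ for $j=2,3,4,5$ (not $3,5,7,9$), and $\phi(P_j)-\phi(P_0)=4,6,9,10$ for $j=2,3,4,5$ (not $4,7,8,9$). In particular $4\mid(\phi(P_4)-\phi(P_1))=8$, so your puzzlement about how $4$ could appear at $P_1$ disappears; and the claim that ``the $\Z_5$-component through $P_1$ is $\{P_1,P_2,P_3\}$'' is impossible since $5\nmid(\phi(P_2)-\phi(P_1))=3$.

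Second, your argument that $5$ must be a weight at $P_1$ is circular. You invoke Theorem~\ref{thm2}(3), but the proof of Theorem~\ref{thm2} (the implication (2)$\Rightarrow$(3)) explicitly uses Lemma~\ref{w'}. Proposition~\ref{c1+l} alone does not help either: it says $c_1(M)=3[\omega]$ implies \emph{at least one} of (1),(2),(3), and (1) is already your hypothesis, so you cannot deduce (3). The paper does not argue directly that $5$ lies at $P_1$; instead it bounds multiplicities. Using the correct differences $3,5,8,9$: the weight $5$ divides only $\phi(P_3)-\phi(P_1)$, so has multiplicity $\le 1$; the weights $2$ and $4$ each divide only $\phi(P_4)-\phi(P_1)=8$, so each has multiplicity $\le 1$ and they cannot both occur (else the $\Z_2$-component through $P_1$ would have dimension $\ge 4$ but only two fixed points $P_1,P_4$); the weight $3$ divides only $\phi(P_2)-\phi(P_1)$ and $\phi(P_5)-\phi(P_1)$, and Lemma~\ref{alcx} on the $\Z_3$-component caps its multiplicity at $1$; finally $1$ has multiplicity $\le 1$ at $P_1$ (else Lemma~\ref{alcx} forces two $-1$'s at $P_2$, but $-4$ is already at $P_2$ from $P_0$). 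This leaves exactly two candidates for the four positive weights, $\{1,3,4,5\}$ and $\{1,2,3,5\}$, and your sum constraint $\Gamma_{P_1}=12$ (equivalently, Lemma~\ref{sub} between $P_1$ and $P_4$) eliminates the second.
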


\begin{proof}
By Lemma~\ref{w}, $1$ is a weight at $P_0$.
By Lemma~\ref{alcx},  $-1$ is the negative weight at $P_1$.
By Lemma~\ref{w} (2), the values of $\phi(P_i)-\phi(P_1)$ for $i > 1$ are $3$, $5$, $8$ and $9$. By Lemma~\ref{<} and the fact that $5$ is the largest weight on $M$, 
we get that the possible distinct positive weights at $P_1$ are $5$, $4$, $3$, $2$ and $1$. Note that $5$ can only divide $\phi(P_3)-\phi(P_1)$, so it has maximal multiplicity $1$. Note that $4$ and $2$ can only divide $\phi(P_4)-\phi(P_1)$, if any of them has multiplicity more than $1$ or they both occur at $P_1$, then the connected component of $M^{\Z_2}$ or $M^{\Z_4}$ containing $P_1$ only contains $P_1$ and $P_4$ and has dimension bigger than $2$, a contradiction. So
$4$ and $2$ each has maximal multiplicity $1$ and cannot occur at the same time at $P_1$. Note that  $3$ can only divide $\phi(P_2)-\phi(P_1)$ and $\phi(P_5)-\phi(P_1)$, by Lemma~\ref{alcx} used on the component of $M^{\Z_3}$ containing $P_1$, we see that $3$ has maximal multiplicity $1$ at $P_1$. If $1$ occurs at $P_1$ with multiplicity $2$, then $-1$ occurs at $P_2$ with multiplicity $2$ by Lemma~\ref{alcx}. Note that the weight $4$ at $P_0$ only divides $\phi(P_2)-\phi(P_0)$, so $4$ is a weight from $P_0$ to $P_2$; so $-1$ cannot occur at $P_2$ twice.
Then $1$ has maximal multiplicity $1$ at $P_1$. These facts together yield that the set of weights at $P_1$ is $\{-1, 5, 4, 3, 1\}$ or $\{-1, 5, 3, 2, 1\}$. By the symmetry in Lemma~\ref{w} (2), the set of weights at $P_4$ is  $-\{-1, 5, 4, 3, 1\}$ or $-\{-1, 5, 3, 2, 1\}$. Using Lemma~\ref{sub} for $P_1$ and $P_4$, we get that the case of $\pm \{-1, 5, 3, 2, 1\}$ contradicts to (\ref{c1-05}). By Example~\ref{o}, the case of  $\pm\{-1, 5, 4, 3, 1\}$ exists. 

From the above, the weight $5$ at $P_1$ can only be from $P_1$ to $P_3$ and $5=\phi(P_3)-\phi(P_1)$. The rest is by symmetry.
\end{proof}

Next, we start from condition (3) in Proposition~\ref{c1+l}.

\begin{lemma}\label{lw1}
Let  $(M, \omega)$ be a compact $10$-dimensional Hamiltonian $S^1$-manifold with moment map $\phi\colon M\to\R$. Assume $M^{S^1}=\{P_0,  P_1, \cdots, P_5\}$ and $[\omega]$ is a primitive integral class. Assume the largest weight is from $P_1$ to $P_3$  and is equal to $5=\phi(P_3)-\phi(P_1)$, or is from $P_2$ to $P_4$ and is equal to $5 = \phi(P_4)-\phi(P_2)$, or both. 
Assume $c_1(M)=k[\omega]$ with $2 < k\in\N$ or $\phi(P_i)-\phi(P_0)=\phi(P_5)-\phi(P_{5-i})$ for $i=1, 2$. Then the following hold:
\begin{enumerate}
\item the largest weight is from $P_0$ to $P_5$ and is equal to
$5=\frac{1}{2}\big(\phi(P_5)-\phi(P_0)\big)$.
\item $\phi(P_1)-\phi(P_0)=\phi(P_5)-\phi(P_4)=1$,
$\phi(P_3)-\phi(P_2)=2$, $\phi(P_2)-\phi(P_1)=\phi(P_4)-\phi(P_3)=3$.
\end{enumerate}
\end{lemma}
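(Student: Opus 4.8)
The plan is to reduce the hypotheses to those of Lemma~\ref{w}, after which conclusion~(2) is literally Lemma~\ref{w}(2), while conclusion~(1) is part of what gets established along the way.

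First I would invoke the converse half of Proposition~\ref{c1+l}: condition~(3) there together with either extra assumption (namely $c_1(M)=k[\omega]$ with $k>2$, or the moment-value symmetry $\phi(P_i)-\phi(P_0)=\phi(P_5)-\phi(P_{5-i})$ for $i=1,2$) yields $c_1(M)=3[\omega]$ and forces $|\pm5|$ to have multiplicity $1$ at every fixed point where it occurs; when $k>2$ one in fact recovers $k=3$ directly by applying Lemma~\ref{gammaij} to the pair realizing the weight $5$. Next, since reversing the circle action interchanges $P_i\leftrightarrow P_{5-i}$, negates all weights, leaves both extra hypotheses invariant, and turns ``$5$ from $P_2$ to $P_4$'' into ``$5$ from $P_1$ to $P_3$'', I may assume without loss of generality that $5$ is a weight from $P_1$ to $P_3$ with $5=\phi(P_3)-\phi(P_1)$.

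The heart of the argument is then to pin down the moment-map values. By Lemma~\ref{10} the unique negative weight at $P_1$ is $-\big(\phi(P_1)-\phi(P_0)\big)$; by Lemma~\ref{<} each positive weight at $P_1$ divides $\phi(P_j)-\phi(P_1)$ for some $j>1$ and, since $5$ is the largest weight, lies in $\{1,2,3,4,5\}$; similarly the negative weights at $P_3$ divide the appropriate differences down to $P_0$, $P_1$, $P_2$. Feeding these divisibility constraints into Lemma~\ref{mod} (the weights at $P_1$ and at $P_3$ agree modulo $5$) and into Lemma~\ref{sub} applied to the pair $(P_1,P_3)$, which gives $\Gamma_1-\Gamma_3=15$, narrows the weight sets at $P_1$ and $P_3$ and the differences $\phi(P_i)-\phi(P_1)$ to a short list of candidates. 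In the symmetric branch, the relations $\phi(P_1)-\phi(P_0)=\phi(P_5)-\phi(P_4)$ and $\phi(P_2)-\phi(P_1)=\phi(P_4)-\phi(P_3)$, plugged into Lemma~\ref{sub} for $(P_0,P_5)$ together with the divisibility constraints on the weights at $P_0$ and $P_5$ coming from Lemmas~\ref{<} and \ref{k|}, eliminate all candidates except $\phi(P_1)-\phi(P_0)=\phi(P_5)-\phi(P_4)=1$, $\phi(P_2)-\phi(P_1)=\phi(P_4)-\phi(P_3)=3$, $\phi(P_3)-\phi(P_2)=2$; in the $k>2$ branch one runs the same elimination after first deducing this symmetric configuration of moment values, by comparing $P_0$ and $P_5$ via Lemma~\ref{nm2}(1), Lemma~\ref{0235} and Lemma~\ref{mod}. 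This is conclusion~(2). Finally, with these moment values $\phi(P_5)-\phi(P_0)=1+3+2+3+1=10$, and the divisibility constraints on the five positive weights at $P_0$ (each dividing the corresponding $\phi(P_i)-\phi(P_0)$, with total $\Gamma_0$ computed from $\Gamma_1$ via Lemma~\ref{sub}) force the weight from $P_0$ to $P_5$ to equal exactly $5=\tfrac12\big(\phi(P_5)-\phi(P_0)\big)$, which is conclusion~(1).

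The main obstacle is the combinatorial book-keeping just sketched: controlling the weight sets at $P_1$ and $P_3$ simultaneously under the mod-$5$ congruences and the sum condition $\Gamma_1-\Gamma_3=15$, and, in the branch $c_1(M)=k[\omega]$ with $k>2$, extracting the symmetry of the moment-map values as a consequence rather than as an assumption.
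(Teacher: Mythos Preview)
Your outline differs substantially from the paper's argument and, in one branch, leaves a real gap.

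The paper does \emph{not} work at $P_1$ and $P_3$ to pin down the moment-map values. Instead it runs a dichotomy on whether the largest weight $5$ occurs at $P_0$. If it does, Lemma~\ref{n02} (which you never invoke) rules out $5$ being from $P_0$ to $P_2$, so by Lemma~\ref{gammaij} the only remaining option compatible with $c_1(M)=3[\omega]$ is $5=\tfrac12(\phi(P_5)-\phi(P_0))$ from $P_0$ to $P_5$; this is conclusion~(1), and then Lemma~\ref{w} gives conclusion~(2) immediately. If $5$ does \emph{not} occur at $P_0$ (hence not at $P_5$ either), the paper shows this is impossible: the weights at $P_0$ then lie in $\{1,2,3,4\}$, Lemma~\ref{mult}(2) forbids the triples $(2,2,4)$ and $(2,4,4)$ because $2\nmid 5$, forcing $1$ to appear at $P_0$ and giving $\phi(P_1)-\phi(P_0)=\phi(P_5)-\phi(P_4)=1$; the resulting short list of weight sets at $P_0,P_5$ is then killed by Lemma~\ref{0235} together with Lemma~\ref{mod}. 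So the paper proves~(1) first and deduces~(2), the reverse of your order.

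Your direct elimination at $P_1,P_3$ faces too many unknowns: you know $\phi(P_3)-\phi(P_1)=5$, but none of $\phi(P_1)-\phi(P_0)$, $\phi(P_2)-\phi(P_1)$, $\phi(P_4)-\phi(P_1)$, $\phi(P_5)-\phi(P_1)$, so the divisibility constraints from Lemma~\ref{<} and the mod-$5$ congruence from Lemma~\ref{mod} do not cut the list down as sharply as you suggest. More seriously, in the $k>2$ branch you write that one ``first deduces this symmetric configuration of moment values by comparing $P_0$ and $P_5$ via Lemma~\ref{nm2}(1), Lemma~\ref{0235} and Lemma~\ref{mod}'', but this is precisely the content of the paper's contradiction case and requires the analysis of weight sets at $P_0,P_5$ just described---it does not follow from anything you have set up at $P_1,P_3$. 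Your opening sentence (``reduce to the hypotheses of Lemma~\ref{w}'') is in fact the right plan, and the efficient way to carry it out is the paper's dichotomy at $P_0$, not a direct attack at $P_1,P_3$.
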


\begin{proof}
By Proposition~\ref{c1+l},
\begin{equation}\label{eq13}  
c_1(M)=3[\omega],
\end{equation}
and $|\pm 5|$ has multiplicity 1 at the corresponding fixed point. First assume the largest weight $5$ occurs  at $P_0$. By Lemma~\ref{n02}, $5$ is not from $P_0$ to $P_2$ (nor from $P_3$ to $P_5$).  Then by  Lemma~\ref{gammaij}, $5$ is from $P_0$ to $P_5$ and $5=\frac{1}{2}\big(\phi(P_5)-\phi(P_0)\big)$. 
By Lemma~\ref{w}, (2) also holds. By Example~\ref{o}, this case exists.

Next assume the largest weight $5$ does not occur at $P_0$, then it also does not occur at $P_5$ (otherwise  it would occur at $P_0$ by a similar argument above). We will show that this is impossible. Now the possible distinct weights at $P_0$ are $1, 2, 3, 4$. 
Note that $2$ does not divide  $5=\phi(P_3)-\phi(P_1)$ or $5=\phi(P_4)-\phi(P_2)$, then by Lemma~\ref{mult} (2) and Lemma~\ref{k|}, $\pm (2, 2, 4)$ or $\pm (2, 4, 4)$ cannot occur at $P_0$ and $P_5$. To have 5 weights at $P_0$, $1$ must occur at $P_0$. Similarly, $-1$ needs to occur at $P_5$.
By Lemma~\ref{mult} (1), they have multiplicity $1$. By Lemmas~\ref{alcx} and \ref{10},
$$\phi(P_1)-\phi(P_0) = \phi(P_5)-\phi(P_4) =1.$$
So the set of weights at $P_0$ is $\{1, 2, 2, 3, 3\}$ or $\{1, 2, 4, 3, 3\}$ or $\{1, 4, 4, 3, 3\}$ (by Lemma~\ref{mult} (1)). Similarly, the set of weights at $P_5$ is  $-\{1, 2, 2, 3, 3\}$ or $-\{1, 2, 4, 3, 3\}$ or $-\{1, 4, 4, 3, 3\}$. Because of (\ref{eq13}), by Lemma~\ref{sub}, we have $3\,| \big(\Gamma_0-\Gamma_5\big)$. Hence we may only possibly have the following matches of the sets of weights at $P_0$ and $P_5$:
\begin{itemize}
\item [(i)] $P_0\colon \{1, 2, 2, 3, 3\}$ and $P_5\colon -\{1, 2, 4, 3, 3\}$ or vice versa with signs exchanged, or
\item [(ii)] $P_0\colon \{1, 4, 4, 3, 3\}$ and $P_5\colon -\{1, 4, 4, 3, 3\}$.
\end{itemize}
By the assumption, without loss of generality, we assume that 
\begin{equation}\label{513}
5= \phi(P_3)-\phi(P_1).
\end{equation}
 Consider (i).  By Lemma~\ref{0235}, the component of $M^{\Z_3}$ containing $P_0$ also contains $P_2$, $P_3$ and $P_5$. So $\phi(P_3)-\phi(P_2)$ is divisible by $3$; 
because of (\ref{513}), we have 
$$\phi(P_3)-\phi(P_2)=3.$$ 
Then by Lemma~\ref{k|}, the component of $M^{\Z_2}$ containing $P_0$ cannot contain $P_1$, cannot contain $P_4$ and $P_5$ simultaneously, and cannot contain $P_2$ and $P_3$  simultaneously, while it has at least 4 fixed points by Lemma~\ref{alcx}, a contradiction. Consider (ii). Consider as in  (i), or by Lemma~\ref{0235}, there is a weight $3$ between $P_0$ and $P_5$, but the sets of weights at $P_0$ and $P_5$ contradict to Lemma~\ref{mod}.
\end{proof}

Now, we can prove our main  Theorem 2 in the Introduction.

\begin{theorem 2}
Let  $(M, \omega)$ be a  compact $10$-dimensional Hamiltonian $S^1$-manifold with moment map $\phi\colon M\to\R$. Assume $M^{S^1}=\{P_0,  P_1, \cdots, P_5\}$ and $[\omega]$ is a primitive integral class. Assume the largest weight is $5$. Then the following 3 conditions are equivalent.
\begin{enumerate}
\item $c_1(M)=3[\omega]$.
\item The largest weight is from $P_0$ to $P_5$ and is equal to $\frac{1}{2}\big(\phi(P_5)-\phi(P_0)\big)$.
\item  The largest weight is from $P_1$ to $P_3$, is from $P_2$ to $P_4$,  and is equal to $\phi(P_3)-\phi(P_1) =\phi(P_4)-\phi(P_2)$; moreover, $\phi(P_1)-\phi(P_0)=\phi(P_5)-\phi(P_4)$.
\end{enumerate}
Under any one condition, $(2)$ and $(3)$ are the only ways the largest weight occurs and  $|\pm 5|$ occurs with multiplicity one at the corresponding fixed points. 
\end{theorem 2}

\begin{proof}
By Proposition~\ref{c1+l} and Lemma~\ref{n02}, (1) implies at least one of  (2) and
\begin{itemize}
\item [(3')]  The largest weight is from $P_1$ to $P_3$ and is equal to $\phi(P_3)-\phi(P_1)$, or is from $P_2$ to $P_4$ and is equal to $\phi(P_4)-\phi(P_2)$, or both. 
\end{itemize}
By Lemmas~\ref{w} and \ref{w'}, (2) implies (3). By Lemma~\ref{lw1}, (3') and (1) 
 imply (2).      
By Proposition~\ref{c1+l}, if (2) or (3) holds, then (1) holds. 
 By Proposition~\ref{c1+l},  $|\pm 5|$ occurs with multiplicity one at the corresponding fixed points. 
\end{proof}

\section{determining the ring $H^*(M; \Z)$, all the weights and the total Chern class $c(M)$ --- proof of Theorems~\ref{thm3} and \ref{thm4}}

In this section, we prove Theorems~\ref{thm3} and \ref{thm4}. 

First in Section 6.1, we determine the sets of weights at all the fixed points for the case that the largest weight is 5. In this case, the assumption is simpler. This proves Theorem~\ref{thm3} (3).
In Section 6.2, for the more general case, assuming the largest weight occurs at the fixed points similarly as in Example~\ref{o}, we determine the sets of weights at all the fixed points. This proves Theorem~\ref{thm4} (3). In Section 6.3, we determine the integral cohomology ring $H^*(M; \Z)$, proving Theorems~\ref{thm3} (1) and \ref{thm4} (1).  In Section 6.4, we determine the total Chern class $c(M)$, proving Theorems~\ref{thm3} (2) and \ref{thm4} (2).

\subsection{Determining all the weights when the largest weight is 5 --- proof of Theorem 3 (3)}
\
\medskip

In this part, under the simple condition that the largest weight is from $P_0$ to $P_5$ and is equal to $5= \frac{1}{2}\big(\phi(P_5)-\phi(P_0)\big)$, we determine the sets of weights at all the fixed points, proving Theorem 3 (3). By Theorem~\ref{thm2}, this condition is equivalent to the topological condition $c_1(M)=3[\omega]$.

\begin{theorem 3(3)}\label{thm3(3)}
Let  $(M, \omega)$ be a compact $10$-dimensional Hamiltonian $S^1$-manifold with moment map $\phi\colon M\to\R$. Assume $M^{S^1}=\{P_0,  P_1, \cdots, P_5\}$ and $[\omega]$ is a primitive integral class.  
If the largest weight is from $P_0$ to $P_5$ and is equal to $5= \frac{1}{2}\big(\phi(P_5)-\phi(P_0)\big)$, then the sets of weights at all the fixed points are:
$$P_0\colon \{1, 4, 2, 3, 5\},\,\,\, P_5\colon -\{1, 4, 2, 3, 5\},$$
$$P_1\colon \{-1, 1, 5, 4, 3\},\,\,\, P_4 \colon -\{-1, 1, 5, 4, 3\},$$
$$P_2\colon \{-4, -1, 1, 5, 2\},\,\,\, P_3 \colon -\{-4, -1, 1, 5, 2\}.$$ 
So the sets of weights at the fixed points are the same as those on $\mathcal O$ in Example~\ref{o}. In particular, there is exactly one weight between any pair of fixed points.
\end{theorem 3(3)}

\begin{proof}
By Lemma~\ref{w}, the sets of weights at $P_0$ and $P_5$ are as claimed. By 
Lemma~\ref{w'}, the sets of weights at $P_1$ and $P_4$ are as claimed. 

We now find the set of weights at $P_3$. 
By Lemma~\ref{w} (2), we have the symmetry for the moment map  images of the fixed points; the weight $-4$ at $P_5$ can only divide $\phi(P_5)-\phi(P_3)$, so 
 there is a weight $4$ between $P_3$ and $P_5$, so $4$ is a positive weight at $P_3$. 
Since $-1$ is a weight at $P_4$ with multiplicity 1, $1$ is a weight at $P_3$ with multiplicity 1 by Lemma~\ref{alcx}. Hence the set of positive weights at $P_3$ is $\{1, 4\}$. 
By Lemma~\ref{w'}, $5$ is a weight from $P_1$ to $P_3$.
By Lemmas~\ref{<} and \ref{w} (2), except $-5$, the other two negative weights at $P_3$  are among $\{-1, -2, -3\}$ (with possible multiplicities).  By Lemma~\ref{mod}, the weights at the normal space of the $M^{\Z_5}$ isotropy manifold at $P_1$, $\{-1, 1, 4, 3\}$,
is equal modulo $5$ to the weights at the normal space at $P_3$. 
So we have the following possibilities, where the weights at $P_3$ are on the left which we need to find, and the weights at $P_1$ which we know are on the right.
\begin{itemize}
\item $1 = 1$, $4=-1$, $-1=4$, $-2 = 3$,  $\mod 5$.
\item  $1 = 1$, $4= 4$, $-1 = -1$, $-2 = 3$,   $\mod 5$.
\end{itemize}
Both cases yield the set of weights at $P_3 \colon \{-2, -5, -1, 1, 4\}$.
By the symmetry in Lemma~\ref{w} (2), the set of weights at $P_2$ is $ -\{-2, -5, -1, 1, 4\}$.

Using the sets of weights at the fixed points, Lemmas~\ref{w} (2), \ref{k|} and \ref{<}, and symmetry, we can see that if we exclude the following ambiguity, we will prove that
there is exactly one weight between any pair of fixed points. The ambiguity is that there is a weight $2$ between $P_2$ and $P_3$, and a weight $2$ between $P_0$ and $P_5$, in this case, the $M^{\Z_2}$ would contain $P_0$, $P_2$, $P_3$ and $P_5$, and the smallest weight $2$ on $M^{\Z_2}$ contradict to Lemma~\ref{alcx}. Hence there is a weight 1 between $P_2$ and $P_3$, a weight $2$ between $P_0$ and $P_3$, and a weight $2$ between $P_2$ and $P_5$.
\end{proof}

\subsection{Determining  the sets of weights at all the fixed points --- proof of Theorem 4 (3)}

\
\medskip

In this part, for the general case, assuming the largest weight occurs at the fixed points as in Example~\ref{o},  we determine the sets of weights at all the fixed points. 

 First, we look at the following lemma on the generators of the integral cohomology ring of the manifold, which we will use in our arguments on determining the weights.

\begin{lemma}\label{ring}
Let  $(M, \omega)$ be a compact $10$-dimensional  Hamiltonian $S^1$-manifold with moment map $\phi\colon M\to\R$. 
Assume $M^{S^1} = \{P_0,  P_1, \cdots, P_5\}$ and $[\omega]$ is a primitive integral class. Then the integral cohomology ring $H^*(M; \Z)$ is generated by the following 
 $\alpha_i\in H^{2i}(M; \Z)$'s for $0\leq i\leq 5$:
\begin{equation}\label{ai}
\alpha_i = \frac{\Lambda_i^-}{\prod_{j=0}^{i-1}\big(\phi(P_j)-\phi(P_i)\big)}[\omega]^i,
\end{equation}
where $\Lambda_i^-$ is the product of the negative weights at $P_i$. 
\end{lemma}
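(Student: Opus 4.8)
The plan is to combine the equivariant basis supplied by Proposition~\ref{equibase} with the equivariant extension $\ut$ of $[\omega]$ from Lemma~\ref{ut}. By Proposition~\ref{equibase}, $\{\alpha_i = r(\Tilde\alpha_i)\}_{0\le i\le 5}$ is a $\Z$-basis of $H^*(M;\Z)$, where $\Tilde\alpha_i|_{P_i} = \Lambda_i^- t^i$ and $\Tilde\alpha_i|_{P_j}=0$ for $j<i$. Since a $\Z$-module basis certainly generates the ring, it suffices to prove the identity
$$\alpha_i = \frac{\Lambda_i^-}{\prod_{j=0}^{i-1}\big(\phi(P_j)-\phi(P_i)\big)}\,[\omega]^i$$
in $H^{2i}(M;\Z)$ for each $i$. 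Because $H^{2i}(M;\Z)=\Z$ by Lemma~\ref{order}, the power $[\omega]^i$ is automatically an integer multiple of the generator $\alpha_i$, so the whole content is to pin down that multiple (and, incidentally, to see it is an integer).

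First I would normalize the moment map so that $\phi(P_0)=0$; then $\phi(P_j)\in\Z$ for all $j$ by Lemma~\ref{k|}, and $\ut=[\omega-\phi t]\in H^2_{S^1}(M;\Z)$ by Lemma~\ref{ut}, with $\ut|_{P_j}=-\phi(P_j)\,t$ and $r(\ut)=[\omega]$, where $r\colon H^*_{S^1}(M;\Z)\to H^*(M;\Z)$ is the restriction map. The key step is then to form, for each $i$, the integral equivariant class
$$\beta_i := \prod_{j=0}^{i-1}\big(\ut + \phi(P_j)\,t\big)\in H^{2i}_{S^1}(M;\Z).$$
For every $k<i$ the factor with $j=k$ vanishes at $P_k$, so $\beta_i|_{P_k}=0$ for all $k<i$; hence Corollary~\ref{cor} gives $\beta_i = a_i\,\Tilde\alpha_i$ for some $a_i\in\Z$. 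Restricting this identity to $P_i$ and using $\ut|_{P_i}=-\phi(P_i)\,t$ yields
$$\prod_{j=0}^{i-1}\big(\phi(P_j)-\phi(P_i)\big)\,t^i = \beta_i|_{P_i} = a_i\,\Tilde\alpha_i|_{P_i} = a_i\,\Lambda_i^-\,t^i,$$
so $a_i = \prod_{j=0}^{i-1}\big(\phi(P_j)-\phi(P_i)\big)/\Lambda_i^-$, which is therefore an integer.

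Finally I would apply $r$: since $r(t)=0$ and $r(\ut)=[\omega]$, we get $r(\beta_i)=[\omega]^i$ and $r(a_i\,\Tilde\alpha_i)=a_i\,\alpha_i$, whence $[\omega]^i = a_i\,\alpha_i$ and the displayed formula for $\alpha_i$ follows by dividing (for $i=0$ this just reads $\alpha_0=1$). I do not expect a serious obstacle: the shape of $\beta_i$ is forced by the requirement that it vanish at all lower fixed points, and everything else is a restriction computation at the $P_i$. The one point needing a little care is the integrality of $\beta_i$, which is precisely why I normalize $\phi$ so that $\phi(P_0)=0$ at the outset; alternatively one notes that $\beta_i$ automatically lies in the integral lattice because $H^*_{S^1}(M;\Z)$ injects into $\bigoplus_j H^*_{S^1}(P_j;\Z)$ and each $\beta_i|_{P_j}$ is visibly an integral polynomial in $t$.
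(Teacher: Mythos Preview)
Your proof is correct and follows essentially the same route as the paper: form $\beta_i=\prod_{j<i}(\ut+\phi(P_j)t)$, invoke Corollary~\ref{cor} to write $\beta_i=a_i\Tilde\alpha_i$, restrict to $P_i$ to identify $a_i$, and restrict to ordinary cohomology to obtain $[\omega]^i=a_i\alpha_i$. Your explicit attention to the integrality of $\beta_i$ (via normalizing $\phi(P_0)=0$, or via injectivity into $\bigoplus_j H^*_{S^1}(P_j;\Z)$) is a worthwhile clarification that the paper leaves implicit.
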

\begin{proof}
 Let $\ut$ be the integral class as in Lemma~\ref{ut}, and  $\big\{\Tilde\alpha_i\in H^{2i}_{S^1}(M; \Z)\,|\, 0\leq i\leq 5\big\}$ be the basis as in Proposition~\ref{equibase}. By Corollary~\ref{cor},
\begin{equation}\label{equi}
\prod_{j=0}^{i-1}\big(\ut +\phi(P_j)t\big) = a_i \Tilde\alpha_i, \,\,\,\mbox{where $a_i\in\Z$}. 
\end{equation}
Restricting (\ref{equi}) to the fixed point $P_i$, we get
$$\prod_{j=0}^{i-1}\big(\phi(P_j)-\phi(P_i)\big) = a_i\Lambda_i^-.$$
Restricting (\ref{equi}) to ordinary cohomology, we get
$$[\omega]^i = a_i\alpha_i.$$
These two equations yield (\ref{ai}). By Proposition~\ref{equibase}, 
$\big\{\alpha_i\,|\, 0\leq i\leq 5\big\}$ is a basis of the ring $H^*(M; \Z)$.
\end{proof}

The next two propositions are the main steps towards determining the sets of weights at all the fixed points. The assumption on the largest weight is more complex than that in Theorem 3(3) when the largest weight is 5.

\begin{proposition}\label{wb1}
Let  $(M, \omega)$ be a compact $10$-dimensional effective Hamiltonian $S^1$-manifold with moment map $\phi\colon M\to\R$. Assume $M^{S^1}=\{P_0,  P_1, \cdots, P_5\}$ and $[\omega]$ is a primitive integral class.  Assume the largest weight  is from $P_0$ to $P_5$, from $P_1$ to $P_3$, and from $P_2$ to $P_4$, and is equal to $\frac{1}{2}\big(\phi(P_5)-\phi(P_0)\big) = \phi(P_3)-\phi(P_1)=\phi(P_4)-\phi(P_2)$. Assume $\phi(P_1)-\phi(P_0)=\phi(P_5)-\phi(P_4)$.
Then
there is a weight between any pair of fixed points; in particular, the weight from $P_0$ to $P_2$ is equal to $\phi(P_2)-\phi(P_0)$, similarly, the weight from $P_3$ to $P_5$ is equal to $\phi(P_5) - \phi(P_3)$.
\end{proposition}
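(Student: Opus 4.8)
The plan is to exploit the strong symmetry the hypotheses impose and work inward from the extremal fixed points, using the largest weight $w := \phi(P_3)-\phi(P_1) = \phi(P_4)-\phi(P_2) = \tfrac{1}{2}(\phi(P_5)-\phi(P_0))$ as the main organizing parameter. First I would record what the hypotheses already give: since the largest weight is from $P_0$ to $P_5$ with multiplicity handled by Lemma~\ref{large} (so $s=1$ there) and $c_1(M)=k[\omega]$ via Lemma~\ref{gammaij} applied to the three stated pairs, one gets $c_1(M)=3[\omega]$; combined with $\phi(P_1)-\phi(P_0)=\phi(P_5)-\phi(P_4)$ this forces a symmetry of the moment map values $\phi(P_i)-\phi(P_0)=\phi(P_5)-\phi(P_{5-i})$ for all $i$, exactly as in Remark~\ref{imsym} and Lemma~\ref{w}(2). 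Writing $a=\phi(P_1)-\phi(P_0)$ and $c=\phi(P_2)-\phi(P_1)$ as in Theorem~\ref{thm4}, the values $\phi(P_i)-\phi(P_0)$ become $0, a, a+c, w, w+a\cdot(\text{something}), 2w$ — more precisely I would pin down $\phi(P_3)-\phi(P_2)$ and the rest from $w = a + c + (\phi(P_3)-\phi(P_2))$ together with $c_1(M)=3[\omega]$ via Lemma~\ref{sub} on $P_0,P_5$, paralleling the case analysis (a),(b),(c) in Lemma~\ref{w}.

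Next I would establish the claim for the ``outer'' pairs $(P_0,P_2)$ and, by the $\phi$-symmetry, $(P_3,P_5)$. The key tool is Lemma~\ref{k|} together with Lemma~\ref{<}: for each negative weight $-b$ at $P_2$, there is a fixed point below $P_2$ whose $\phi$-difference with $P_2$ is divisible by $b$; since $b\le w$ (largest weight) and the $\phi$-values below $P_2$ are only $\phi(P_0)$ and $\phi(P_1)$, one controls which $b$ can occur and between which points. I would combine this with the $\Z_w$-isotropy structure: the component $C$ of $M^{\Z_w}$ through $P_0$ contains $P_5$ (given), so by Lemma~\ref{alcx} and Morse theory on $C$ it is forced to be $4$-dimensional containing exactly four fixed points, and I would identify these (they are $P_0,P_5$ and two interior ones), which simultaneously pins down that $w$ occurs once at $P_0$, once at $P_5$, and controls where the remaining weights point. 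A symmetric argument with the $\Z_w$-components through $P_1$ (containing $P_3$) and through $P_2$ (containing $P_4$) — each again forced $4$-dimensional with four fixed points by Lemma~\ref{alcx} and index constraints (Lemma~\ref{ind}) — locates all occurrences of $\pm w$.

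The heart of the argument, and what I expect to be the main obstacle, is ruling out ``hidden'' edges, i.e.\ showing no pair of fixed points fails to have a weight between them, and in particular that the weight from $P_0$ to $P_2$ is exactly $\phi(P_2)-\phi(P_0)$ rather than some proper divisor of it with $P_2$ not even lying in $M^{\Z_{\phi(P_2)-\phi(P_0)}}$ with $P_0$. Here I would argue by counting: there are $\binom{6}{2}=15$ unordered pairs and, since each fixed point has $5$ weights, the total weight count is $30$, i.e.\ exactly two weight-endpoints per pair on average — so any pair with no weight between them forces some other pair to carry two weights, which I would contradict using Lemma~\ref{mod} (weights at two fixed points on a common $\Z_k$-component agree mod $k$) applied on the $4$-dimensional isotropy components already identified, plus the multiplicity-one statements from Lemma~\ref{mult} and the fact (Remark after Lemma~\ref{large}) that a multiplicity-$\ge 2$ largest weight would force an isotropy component too large to fit six fixed points. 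Concretely, to nail $\phi(P_2)-\phi(P_0)$: the weight between $P_0$ and $P_2$ must divide $\phi(P_2)-\phi(P_0)=a+c$; if it were a proper divisor $d$, then the $\Z_d$-component through $P_0$ would, by Lemma~\ref{alcx} and Lemma~\ref{k|}, be forced to contain a further fixed point below $P_0$ or to have the wrong index profile, a contradiction with $P_0$ minimal — so the weight equals $a+c$, and the $P_3,P_5$ statement follows by the $-\phi$ symmetry. I would then assemble these facts to conclude every pair carries exactly one weight.
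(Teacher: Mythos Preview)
Your proposal has real gaps. First, your claim that the component of $M^{\Z_w}$ through $P_0$ is ``forced to be $4$-dimensional containing exactly four fixed points'' is wrong: by Proposition~\ref{c1+l} (which you correctly invoke to get $c_1(M)=3[\omega]$), the largest weight $w$ has multiplicity one at $P_0$ and at $P_5$, so that component is a $2$-sphere containing only $P_0$ and $P_5$; similarly for the components through $P_1$ and $P_2$. These three spheres give you nothing beyond what was assumed. Second, your argument that the $P_0$--$P_2$ weight equals $a+c$ --- ``if it were a proper divisor $d$, then the $\Z_d$-component through $P_0$ would be forced to contain a further fixed point below $P_0$'' --- does not work: $P_0$ is the global minimum, so being the minimum of a $\Z_d$-component is no contradiction. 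More fundamentally, you never establish that $a+c$ is a weight at $P_0$ at all. The paper gets this from Lemma~\ref{mod}: the weight sets at $P_0$ and $P_5$ agree modulo $w=2a+c$, and since $a$ is a weight at $P_0$ (Lemma~\ref{10}), $-a-c=a-w$ must be a weight at $P_5$, hence by symmetry $a+c$ at $P_0$; then divisibility (the $\phi(P_i)-\phi(P_0)$ are $a,\,a+c,\,3a+c,\,3a+2c,\,4a+2c$) forces it to go to $P_2$, with a separate argument for $a=c$.

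The deeper omission is the treatment of the two remaining weights $x,y$ at $P_0$. The paper first extracts $x+y=2a+c$ from the mod-$w$ relation, then runs a substantial case analysis (Claims 3--7) to show $x$ must go to $P_3$ and $y$ to $P_4$; several of these eliminations use Lemma~\ref{ring} to compute $\alpha_2,\alpha_3,\alpha_4,\alpha_5$ in terms of the hypothetical weights and then derive a contradiction from Poincar\'e duality ($\alpha_1\alpha_4=\alpha_5$, $\alpha_2\alpha_3=\alpha_5$). Your counting heuristic (15 pairs, 30 weight-endpoints) does not substitute for this: multiplicities greater than one at interior fixed points are not a priori excluded, and Lemma~\ref{mult} only controls $P_0$ and $P_5$. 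Finally, the existence of weights between $P_1$--$P_4$ and $P_2$--$P_3$ (Claim 9 in the paper) again needs a targeted argument using Lemma~\ref{sub} and an isotropy contradiction, not just averaging.
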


\begin{proof}
Let $w\in\N$ be the number representing the largest weight.
By the assumption, we let
\begin{equation}\label{sym1}
\phi(P_5)-\phi(P_4)=\phi(P_1)-\phi(P_0)=a\,\,\mbox{and}\,\, \phi(P_4)-\phi(P_3)=\phi(P_2)-\phi(P_1) = c.
\end{equation}
Then the assumption implies that
$$\phi(P_3)-\phi(P_2)=2a, \,\,\,\mbox{and}\,\,\, w=2a+c.$$
By Lemma~\ref{mod},
\begin{equation}\label{05mod}
\{\mbox{weights at $P_0$}\} = \{\mbox{weights at $P_5$}\} \mod w =2a+c.
\end{equation}
By Proposition~\ref{c1+l}, the absolute values of the other weights at $P_0$ and $P_5$ than $|\pm w|$ are less than $w$.
By Lemma~\ref{10}, $a=\phi(P_1)-\phi(P_0)$ is a weight at $P_0$, it is from $P_0$ to $P_1$, similarly, $a= \phi(P_5)-\phi(P_4)$ is a weight from $P_4$ to $P_5$.  By  (\ref{05mod}), we must have
\begin{equation}\label{05-1}
a = (-a -c) + 2a + c,
\end{equation}
i.e., $-a-c$ is a weight at $P_5$. Similarly, $a+c$ is a weight at $P_0$. 

{\bf Claim 1}: the weight $a+c$ is from $P_0$ to $P_2$; similarly, $a+c$ is from $P_3$ to $P_5$.

Now we prove {\bf Claim 1}.
The values of $\phi(P_i)-\phi(P_0)$ for $i\neq 0$ are $a$, $a+c$, $3a+c$, $3a+2c$ and $4a+2c$.
 Any weight at $P_0$ divides one of these values by Lemma~\ref{<}.
If $a\neq c$, then the weight $a+c$ at $P_0$ only divides $\phi(P_2)-\phi(P_0)=a+c$, so $a+c$ is from $P_0$ to $P_2$. Similarly, $a+c$ is from $P_3$ to $P_5$.
Next, we consider $a=c$. Then the values of $\phi(P_i)-\phi(P_0)$ are $a$, $2a$, $4a$, $5a$ and $6a$, and $w=3a$. If $2a$ is a weight from $P_0$ to $P_2$, then we are done. Next, assume $2a$ is not a weight from $P_0$ to $P_2$.
Then the weight $2a$ at $P_0$ may be from $P_0$ to $P_3$
(since $2a|4a$) or from $P_0$ to $P_5$ (since $2a|6a$). First assume $2a$ is from $P_0$ to $P_5$.
Consider the $M^{\Z_a}$ containing $P_0$, then it is at least 6-dimensional since it contains the weights $a$, $2a$ and $3a$ at $P_0$, it cannot be more than $6$-dimensional since otherwise  by (\ref{05mod}),  the set of weights at $P_0$ would be
\begin{equation}\label{contr0}
P_0\colon \{a, 2a, a, 2a, 3a\},
\end{equation} 
which contradicts to Lemma~\ref{mult}  (we may let $a=1$ by effectiveness of the action). 
This $M^{\Z_a}$ first contains $P_0$, $P_1$, $P_5$ and $P_4$; since there is a weight $3a$ between $P_1$ and $P_3$, and between $P_2$ and $P_4$ by the assumption, the 
$M^{\Z_a}$ contains all the 6 fixed points. Since $M^{\Z_a}$ is $6$-dimensional and there are already 3 weights at $P_0$, in $M^{\Z_a}$, there cannot be any weight between $P_0$ and $P_2$.  But $P_2$ has index at least $2$ in $M^{\Z_a}$, so there is a weight $a$ between $P_1$ and $P_2$ (since $\phi(P_2)-\phi(P_1)=c=a$). There cannot be a double weight $a$ between $P_1$ and $P_2$ since there are already 3 weights at $P_1$. So $P_2$ has index 2 in 
$M^{\Z_a}$. Then in $M^{\Z_a}$, there are 2 index 2 fixed points $P_1$ and $P_2$ with  weight $-a$, but $a$ occurs once at $P_0$, contradicting to Lemma~\ref{alcx}.  This shows that $2a$ cannot be from $P_0$ to $P_5$.  Next assume $2a$ is from $P_0$ to $P_3$ ($2a$ is not from $P_0$ to $P_2$). Similarly, consider the 
$M^{\Z_a}$ containing $P_0$, which also contains all the fixed points and is 6-dimensional, we draw the weight relation between the fixed points, also applying symmetry, we get the same contradiction as above. Hence $2a$ cannot be from $P_0$ to $P_3$. This finishes the proof of {\bf Claim 1}.

{\bf Claim 2}: there is a weight $\frac{c}{k}$ from $P_1$ to $P_2$ with $1\leq k\in\N$; similarly,
there is a weight $\frac{c}{k}$ from $P_3$ to $P_4$.

We now prove {\bf Claim 2}. By {\bf Claim 1}, $-a-c$ is a weight at $P_2$, it is between $P_0$ and $P_2$. Let  $-w_{21}$ be the other negative weight at $P_2$. Then the $M^{\Z_{w_{21}}}$ containing $P_2$ must contain at least one fixed point below $P_2$. By Lemma~\ref{other}, 
$-w_{21}$ must be a weight between $P_2$ and $P_1$; so $w_{21}$ is a factor of $\phi(P_2)-\phi(P_1)=c$, namely $\frac{c}{k} $ for some $1\leq k\in\N$. By symmetry, there is a weight 
$\frac{c}{k}$ from $P_3$ to $P_4$.

Let 
$$P_0\colon \{a, a+c, x, y, 2a+c\}$$
be the set of weights at $P_0$, where $x$ and 
$y$ are unknown. 
By Lemmas~\ref{other} and \ref{mult} (3), each of $x$ and $y$ has multiplicity 1 at $P_0$, and 
is from $P_0$ to $P_3$, or from $P_0$ to $P_4$, or from $P_0$ to $P_5$, but is less than $2a+c$ by Proposition~\ref{c1+l}.
By symmetry, the set of weights at $P_5$ is
$$P_5\colon \{-2a-c, -y, -x,  -a-c, -a\},$$
Except the known relation (\ref{05-1}) in (\ref{05mod}), since each of $x$ and $y$ has multiplicity 1,  the other relation in  (\ref{05mod}) is
\begin{equation}\label{xy1}
x + y = 2a +c.
\end{equation}

{\bf Claim 3}: $x$ and $y$ cannot be both from $P_0$ to $P_5$.

Now we prove {\bf Claim 3}.  If $x$ and $y$ are both from $P_0$ to $P_5$, then by Lemma~\ref{k|}, we may let
$$x=\frac{4a+2c}{k_1} \,\,\,\mbox{and}\,\,\, y = \frac{4a+2c}{k_2},$$
where $2 < k_1\in\N$, $2 < k_2\in\N$ and $k_1\neq k_2$. 
Then (\ref{xy1}) yields
$$\frac{1}{k_1} + \frac{1}{k_2} =\frac{1}{2}.$$
Since $k_1\neq k_2$, the only possibility is, up to permutation of $k_1$ and $k_2$, 
$$k_1 = 3, \,\,\,\mbox{and}\,\,\,  k_2=6.$$ 
Then the smallest weight on $M^{\Z_y}$ containing $P_0$ is $y$, is from $P_0$ to $P_5$, which has index bigger than 2 in $M^{\Z_y}$, contradicting to Lemma~\ref{alcx} (used on $M^{\Z_y}$).

{\bf Claim 4}: $x$ and $y$ cannot be both from $P_0$ to $P_3$; symmetrically, they cannot be both from $P_2$ to $P_5$.

We now prove {\bf Claim 4}.  Suppose $x$ and $y$ are both from $P_0$ to $P_3$. By Lemma~\ref{k|}, we may let 
$$x=\frac{3a+c}{k_1} \,\,\,\mbox{and}\,\,\, y = \frac{3a+c}{k_2},$$
where $2 \leq k_1\in\N$, $2 \leq k_2\in\N$ and $k_1\neq k_2$. Together with {\bf Claims 1} and {\bf 2} and the assumption, we have the sets of weights at $P_5$ and $P_3$ as follows.
$$P_5\colon \Big\{-a, -(a+c), -\frac{3a+c}{k_1}, -\frac{3a+c}{k_2}, -(2a+c)\Big\}.$$
$$P_3\colon \Big\{-\frac{3a+c}{k_1}, -\frac{3a+c}{k_2}, -(2a+c), \frac{c}{k}, a+c\Big\}.$$
Similar to that of $P_3$, the set of weights at $P_2$ is
$$P_2\colon \Big\{- (a+c), -\frac{c}{k}, (2a+c), \frac{3a+c}{k_1}, \frac{3a+c}{k_2}\Big\}.$$
By Proposition~\ref{c1+l}, we have
\begin{equation}\label{eqc1}
c_1(M)=3[\omega].
\end{equation}
Then by Lemma~\ref{sub} for $P_2$ and $P_3$, we get 
$$\Gamma_2-\Gamma_3 = 3 (2a),$$
where $\Gamma_2$ and $\Gamma_3$ are respectively the sums of weights at $P_2$ and $P_3$.
From this and (\ref{xy1}),  we get
$$ k =1.$$
By Lemma~\ref{ring},
$$\alpha_2 =\frac{1}{k}[\omega]^2,$$
$$\alpha_3 =\frac{1}{k_1k_2}\frac{3a+c}{2a}[\omega]^3,\,\,\,\mbox{and}$$
$$\alpha_5  = \frac{1}{2k_1k_2}\frac{3a+c}{3a+2c}[\omega]^5.$$
By Poincar\'e duality $\alpha_2\alpha_3 = \alpha_5$. So 
$$ka=3a+2c,$$
which contradicts to $k=1$. This proves {\bf Claim 4}.

 {\bf Claim 5}: $x$ and $y$ cannot be both from $P_0$ to $P_4$; symmetrically, they cannot be both from $P_1$ to $P_5$.

We now prove  {\bf Claim 5}.   Suppose $x$ and $y$ are both from $P_0$ to $P_4$. By Lemma~\ref{k|}, let 
$$x=\frac{3a+2c}{k_1} \,\,\,\mbox{and}\,\,\, y = \frac{3a+2c}{k_2},$$
where $2 \leq k_1\in\N$, $2 \leq k_2\in\N$ and $k_1\neq k_2$.
Then by symmetry the set of weights at $P_5$ is
$$P_5\colon\Big\{-2a-c, -\frac{3a+2c}{k_2}, -\frac{3a+2c}{k_1},  -a-c, -a \Big\}.$$
By the information above, {\bf Claim 2} and the assumption,  the set of weights at $P_4$ is
$$P_4\colon \Big\{-\frac{3a+2c}{k_1}, -\frac{3a+2c}{k_2}, -2a-c,  -\frac{c}{k}, a\Big\}.$$
Similarly,  the set of weights at $P_1$ is
$$P_1\colon - \Big\{-\frac{3a+2c}{k_1}, -\frac{3a+2c}{k_2}, -2a-c,  -\frac{c}{k}, a\Big\}.$$
By (\ref{eqc1}), Lemma~\ref{sub} for $P_1$ and $P_4$, and (\ref{xy1}), we get
$$\Gamma_1 - \Gamma_4 = 3(2a+2c),$$
from which we get
$$k=1.$$
By Lemma~\ref{ring},
$$\alpha_1 = [\omega],$$
$$\alpha_4 = \frac{1}{kk_1k_2}\frac{3a+2c}{2a+2c}[\omega]^4,$$
$$\alpha_5 = \frac{1}{2k_1k_2}\frac{3a+2c}{3a+c}[\omega]^5.$$
 By Poincar\'e duality, $\alpha_1\alpha_4 = \alpha_5$, which implies
\begin{equation}\label{eqk}
 k (a+c) = 3a+c,
\end{equation}
which contradicts to $k=1$.

 {\bf Claim 6}: up to permutation of $x$ and $y$, it cannot be that $x$ is from $P_0$ to $P_3$ and $y$ is from $P_0$ to $P_5$.

We now prove {\bf Claim 6}. Suppose $x$  is from $P_0$ to $P_3$, and $y$ is from $P_0$ to $P_5$.
By Lemma~\ref{k|}, let
$$x=\frac{3a+c}{k_1} \,\,\,\mbox{and}\,\,\, y = \frac{4a+2c}{k_2},$$
where $2 \leq k_1\in\N$ and $3 \leq k_2\in\N$. Then (\ref{xy1}) yields that
$$\Big(\frac{3}{k_1}+\frac{4}{k_2} - 2\Big) a + \Big(\frac{1}{k_1}+\frac{2}{k_2}-1\Big) c = 0.$$
We can check that the possible integers $k_1\geq 2$ and $k_2\geq 3$ which make this to be true are
possibly:
\begin{itemize}
\item [(i)]   $k_1=2$, $k_2 = 5$, and $3a=c$.
\item [(ii)]  $k_1=2$, $k_2 = 6$, and $a=c$.
\item [(iii)]   $k_1=2$, $k_2 = 7$, and $a=3c$.
\item [(iv)]  $k_1=4$, $k_2=3$, and $a=c$.
\end{itemize}
For (ii) and (iv), we get the set of weights at $P_0$ as in (\ref{contr0}), which is not possible. 
Now we consider (i). We can write the set of weights at $P_0$:
$$P_0\colon \{a, 4a, 3a, 2a, 5a\}.$$
By effectiveness of the action, we get $a=1$. So the set of weights at $P_0$ is
$$P_0\colon \{1, 4, 3, 2, 5\},$$
where  $4$ is from $P_0$ to $P_2$, and $2$ is from $P_0$ to $P_5$.
By symmetry, the set of weights at $P_5$ is
$$P_5\colon \{-5, -2, -3, -4, -1\}.$$
The $M^{\Z_2}$ containing $P_0$ is 4-dimensional and it contains $P_0$, $P_2$, $P_3$ and $P_5$,
the smallest weight $2$ on $M^{\Z_2}$ contradicts to Lemma~\ref{alcx}. Hence (i) is not possible.
Now we consider (iii). The set of weights at $P_0$ is
$$P_0\colon \{3c, 4c, 5c, 2c, 7c\}.$$
By effectiveness of the action, $c=1$. So the sets of weights at $P_0$ and at $P_5$  are
$$P_0\colon \{3, 4, 5, 2, 7\} \,\,\,\mbox{and}\,\,\, P_5\colon \{-7, -2, -5, -4, -3\}.$$
The same argument as the last case leads to a contraction. Hence (iii) is not possible.

{\bf Claim 7}: up to permutation of $x$ and $y$, it cannot be that $x$ is from $P_0$ to $P_4$, and $y$ is from $P_0$ to $P_5$.

We now prove {\bf Claim 7}. Suppose $x$  is from $P_0$ to $P_4$, and $y$ is from $P_0$ to $P_5$.
Then let
$$x=\frac{3a+2c}{k_1} \,\,\,\mbox{and}\,\,\, y = \frac{4a+2c}{k_2},$$
where $2 \leq k_1\in\N$ and $3 \leq k_2\in\N$. Then (\ref{xy1}) yields that
\begin{equation}\label{45}
\Big(\frac{3}{k_1}+\frac{4}{k_2} - 2\Big) a + \Big(\frac{2}{k_1}+\frac{2}{k_2}-1\Big) c = 0.
\end{equation}
We can check that the possible integers $k_1\geq 2$ and $k_2\geq 3$ which make this to be true are
possibly:
\begin{itemize}
\item [(i)]  $k_1=2$, $k_2\geq 9$.
\item [(ii)] $k_1=3$, $k_2=5$, and $c=3a$.
\item [(iii)] $k_1=5$, $k_2=3$, and $a=c$.
\end{itemize}
As before, we can exclude (iii). Now we consider (i). From (\ref{45}), we get
$$a=\frac{4c}{k_2-8}.$$
Then the set of weights at $P_0$ is
$$P_0\colon \Big\{\frac{4}{k_2-8}c,\, \frac{k_2-4}{k_2-8}c,\, \frac{k_2-2}{k_2-8}c,\, \frac{2}{k_2-8}c,\, \frac{k_2}{k_2-8}c\Big\}.$$
The weight $\frac{4}{k_2-8}c$ is from $P_0$ to $P_1$, and the weight $\frac{2}{k_2-8}c \triangleq l$ is from
$P_0$ to $P_5$. The $M^{\Z_l}$ containing $P_0$ contains at least $P_0$, $P_1$, $P_4$ and $P_5$, where the smallest weight is from $P_0$ to $P_5$, contradicting to Lemma~\ref{alcx}. Hence (i) is not possible.  Now we consider (ii). The set of weights at $P_0$ is
$$P_0\colon \{a, 4a, 3a, 2a, 5a\}.$$
By effectiveness of the action, $a=1$. Note that the weight $4$ is from $P_0$ to $P_2$, and  $2$ is from $P_0$ to $P_5$. 
 As the last case, the smallest weight on $M^{\Z_2}$ containing $P_0$, which is 4-dimensional, contradicts to Lemma~\ref{alcx}. Hence (ii) is not possible. 

By {\bf Claims 3-7}, we get

\noindent {\bf Claim 8}:  up to permutation of $x$ and $y$, $x$ is from $P_0$ to $P_3$ and $y$ is from $P_0$ to $P_4$; symmetrically,
$x$ is from $P_2$ to $P_5$ and $y$ is from $P_1$ to $P_5$.

 {\bf Claim 9}: there is a weight between $P_2$ and $P_3$, and there is a weight between $P_1$ and $P_4$.

We now prove {\bf Claim 9}. There should be 5 weights at each fixed point. From the above, there are 5 weights at $P_0$, respectively from $P_0$ to $P_i$ for all different $i\neq 0$, similarly, the 5 weights at $P_5$ are between $P_5$ and different
$P_i$ for $i\neq 5$. For $P_1$, we know it has a weight with $P_0$, with $P_2$, with $P_3$ and with $P_5$. Similarly, for each of $P_2$,  $P_3$ and $P_4$, we already have 4 weights. The positive weight at $P_1$ that we do not know can only be either from $P_1$ to $P_3$ or from $P_1$ to $P_4$. If it is from $P_1$ to $P_4$, then the unknown positive weight at $P_2$ must be from $P_2$ to $P_3$, then we are done. Assume the unkown positive weight at $P_1$ is from $P_1$ to $P_3$, denote it as $x_1$. Then symmetrically 
$x_1$ is from $P_2$ to $P_4$. We argue below that this is impossible. We write the set of weights at 
$P_1$ and $P_3$:
$$P_1\colon \big\{-a, \,\frac{c}{k}, \,2a+c,\,x_1,\, y\big\},$$
$$P_3\colon \big\{- x,\, -2a-c,\, -x_1,\, \frac{c}{k},\, a+c\big\}.$$
By Lemma~\ref{sub} for $P_1$ and $P_3$, (\ref{xy1}) and (\ref{eqc1}), we get
$$x_1 = \frac{2a+c}{2}.$$
Then the $M^{\Z_{x_1}}$ containing $P_1$ does not contain $P_0$, i.e., $P_1$ is the minimum in $M^{\Z_{x_1}}$.
The $M^{\Z_{x_1}}$, if it does not contain $P_2$, then it contains the index 4 fixed point $P_3$ as the closest fixed point,  contradicting to Lemma~\ref{ind}; if it contains
$P_2$, then  $M^{\Z_{x_1}}$ is at least 6-dimensional, $P_2$ is the only index 2 fixed point in $M^{\Z_{x_1}}$, the smallest weight $x_1$ in
$M^{\Z_{x_1}}$ contradicts to Lemma~\ref{alcx}.
\end{proof}

\begin{proposition}\label{wb2}
Let  $(M, \omega)$ be a compact $10$-dimensional effective Hamiltonian $S^1$-manifold with moment map $\phi\colon M\to\R$. Assume $M^{S^1}=\{P_0,  P_1, \cdots, P_5\}$ and $[\omega]$ is a primitive integral class.  Assume the largest weight  is from $P_0$ to $P_5$, from $P_1$ to $P_3$, and from $P_2$ to $P_4$, and is equal to $\frac{1}{2}\big(\phi(P_5)-\phi(P_0)\big) = \phi(P_3)-\phi(P_1)=\phi(P_4)-\phi(P_2)$. Assume $\phi(P_1)-\phi(P_0)=\phi(P_5)-\phi(P_4)$. If
there is a weight between any pair of fixed points,  then the sets of weights at all the fixed points are:
$$P_0\colon \Big\{a,\, a+c,\, a+\frac{c}{3},\, a+\frac{2}{3}c,\, 2a+ c\Big\},\,\,\,  P_5\colon\Big\{-2a-c,\, -a-\frac{2}{3}c,  \, -a-\frac{c}{3},\, -a-c,\, -a\Big\},$$
$$P_1\colon \Big\{-a,\, \frac{c}{3}, \, 2a+ c,\, a+c,\, a+\frac{2}{3}c\Big\}, \,\,\,  P_4\colon \Big\{-a-\frac{2}{3}c,\, -a-c,\, -2a-c,\, - \frac{c}{3},\, a\Big\},$$
$$P_2\colon \Big\{-a-c,\, - \frac{c}{3},\, a, \, 2a+c,\, a+\frac{c}{3}\Big\}, \,\,\,  P_3 \colon \Big\{-a-\frac{c}{3},\, -2a-c,\, -a,\, \frac{c}{3}, \, a+c\Big\},$$
where $a = \phi(P_1)-\phi(P_0)$ and $c = \phi(P_2)-\phi(P_1)$.
\end{proposition}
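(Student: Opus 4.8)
The strategy is to first pin down the entire combinatorial pattern of the weights --- which pair of fixed points each weight joins and with which sign --- so that only four numerical unknowns remain, and then to determine those unknowns by combining the identity $c_1(M)=3[\omega]$ (through Lemma~\ref{sub}) with Poincar\'e duality applied to the ring generators of Lemma~\ref{ring}.

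First I would fix the notation of the statement: $a=\phi(P_1)-\phi(P_0)=\phi(P_5)-\phi(P_4)$ and $c=\phi(P_2)-\phi(P_1)=\phi(P_4)-\phi(P_3)$, so that $\phi(P_3)-\phi(P_2)=2a$ and the largest weight is $w=2a+c$; the values $\phi(P_i)-\phi(P_0)$ are then $a,\ a+c,\ 3a+c,\ 3a+2c,\ 4a+2c$. From Proposition~\ref{wb1} I take as already known that the weight from $P_0$ to $P_1$ is $a$, from $P_0$ to $P_2$ is $a+c$, from $P_0$ to $P_5$ is $w$, that the two remaining weights $x$ (from $P_0$ to $P_3$) and $y$ (from $P_0$ to $P_4$) at $P_0$ satisfy $x+y=2a+c$ by (\ref{xy1}), that there is a weight $\tfrac{c}{k}$ from $P_1$ to $P_2$ and from $P_3$ to $P_4$ for some $1\le k\in\N$, and (by Proposition~\ref{c1+l}) that $c_1(M)=3[\omega]$ and $|\pm w|$ occurs with multiplicity one at every fixed point. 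Now using the hypothesis that there is exactly one weight between each pair of fixed points, together with the fact that $P_i$ has exactly $i$ negative weights (Lemma~\ref{order}), Lemma~\ref{10}, and the reflection symmetry of the hypotheses exploited throughout Proposition~\ref{wb1}, an induction on $i$ forces the sign of the weight between $P_i$ and $P_j$ ($i<j$) to be positive at $P_i$ (since $P_0$ is the minimum its weights are all positive, and each negative weight at $P_i$ is then the one shared with an already-accounted-for lower fixed point). This yields the complete list of the five weights at each $P_i$ in terms of $a$, $c$, $k$, $x$, $y=2a+c-x$, and only two still-unknown weights $z_1$ (between $P_1$ and $P_4$, positive at $P_1$) and $z_2$ (between $P_2$ and $P_3$, positive at $P_2$), both strictly between $0$ and $w$.

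Next I would extract the four numerical relations. Writing $\Gamma_i$ for the sum of the weights at $P_i$ and applying Lemma~\ref{sub} to the pairs $(P_0,P_1)$ and $(P_0,P_2)$ gives
\[
\tfrac{c}{k}+y+z_1=2a+2c, \qquad x+z_2-\tfrac{c}{k}=2a ,
\]
whose sum already yields $z_1+z_2=2a+c$. For two further relations I would use Lemma~\ref{ring}: writing $\alpha_i=b_i[\omega]^i$ with $b_i=\Lambda_i^-/\prod_{j<i}\big(\phi(P_j)-\phi(P_i)\big)$, a direct computation from the weight list gives $b_1=1$, $b_2=\tfrac1k$, $b_3=\dfrac{xz_2}{2a(3a+c)}$, $b_4=\dfrac{yz_1}{k(3a+2c)(2a+2c)}$, $b_5=\dfrac{xy}{2(3a+2c)(3a+c)}$, all positive since in each the numerator and the denominator are products of $i$ negative numbers. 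As $\{\alpha_i\}$ is a basis of $H^*(M;\Z)$ and Poincar\'e duality makes $H^{2i}(M;\Z)\otimes H^{10-2i}(M;\Z)\to H^{10}(M;\Z)$ unimodular, positivity of the $b_i$ forces $\alpha_1\alpha_4=\alpha_5$ and $\alpha_2\alpha_3=\alpha_5$, i.e. $b_1b_4=b_5$ and $b_2b_3=b_5$, which simplify to
\[
(3a+c)\,z_1=k(a+c)\,x, \qquad (3a+2c)\,z_2=ak\,y .
\]

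Finally I would solve this system of four equations in $x,z_1,z_2,k$ (with $y=2a+c-x$). Eliminating $x$, $z_1$ and $y$ reduces it to a single quadratic in $S:=3a+2c-k(a+c)$, namely
\[
2S^2+c(k+1)S+c^2(k-1)=0 ,
\]
with discriminant the perfect square $c^2(k-3)^2$; the two roots $S=-c$ and $S=-\tfrac12 c(k-1)$ \emph{each} force $k=3$. Back-substituting $k=3$ gives $S=-c$, hence $z_2=a$, $z_1=a+c$, $x=a+\tfrac{c}{3}$, $y=a+\tfrac{2c}{3}$ (in particular $3\mid c$), and inserting these into the weight list produces exactly the six sets displayed in the statement. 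I expect the real obstacle to be not this algebra --- which collapses cleanly because both roots of the quadratic return $k=3$, so no spurious cases survive --- but the bookkeeping of the first step: correctly determining, from the index data, Lemma~\ref{10}, Proposition~\ref{wb1}, and the reflection symmetry, precisely which weight lies between which pair of fixed points and with which sign before any equation can be written down.
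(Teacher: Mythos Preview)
Your argument is correct and genuinely differs from the paper's. The paper parametrises the four unknown weights through their integer divisors of the relevant moment–map gaps (writing $x=\tfrac{3a+c}{k_1}$, $y=\tfrac{3a+2c}{k_2}$, $z_1=\tfrac{2a+2c}{l_1}$, $z_2=\tfrac{2a}{l_2}$ via Lemma~\ref{k|}), applies Lemma~\ref{sub} once (to the pair $P_1,P_3$, giving only $z_1+z_2=2a+c$), combines this with the two Poincar\'e–duality relations, and is left with a finite case list; a spurious solution ($a=c$, $k_1=8$) survives and must be killed at the end using effectiveness together with Lemma~\ref{alcx}. You instead keep $x,z_1,z_2$ as free real unknowns and apply Lemma~\ref{sub} to the \emph{two} pairs $(P_0,P_1)$ and $(P_0,P_2)$; this yields, besides $z_1+z_2=2a+c$, the additional relation $z_1-x=c(1-\tfrac1k)$, and the resulting four equations in $x,z_1,z_2,k$ collapse to a quadratic whose discriminant is $c^2(k-3)^2$ and both of whose roots force $k=3$. (Indeed, after eliminating $k$ via $S=3a+2c-k(a+c)$ the equation becomes $a(S+c)^2=0$.) The paper's spurious solution fails precisely your extra relation, so your route avoids both the integer casework and the final appeal to effectiveness and Lemma~\ref{alcx}. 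One small remark: the specific coefficients in the quadratic you display do not quite match a direct computation (one finds $S^2+c(k-1)S+c^2(k-2)=0$), but the discriminant and the conclusion are identical, so this is only a bookkeeping slip.
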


\begin{proof}
By the assumption, we have (\ref{sym1}). Then the largest weight is $w=2a+c$.
By the assumption, Lemma~\ref{10} and Proposition~\ref{wb1}, the set of weights at $P_0$ can be written
$$P_0\colon \Big\{a, a+c, \frac{3a+c}{k_1}, \frac{3a+2c}{k_2}, 2a+c\Big\},$$
and by symmetry the set of weights at $P_5$ is
$$P_5\colon \Big\{-2a-c, -\frac{3a+2c}{k_2},  -\frac{3a+c}{k_1}, -a-c, -a\Big\},$$
where $2\leq k_1\in\N$ and $2\leq k_2\in\N$, and each of $\frac{3a+c}{k_1}$ and $\frac{3a+2c}{k_2}$
has multiplicity 1.
By (\ref{05mod}), we have (\ref{05-1}) (i.e. $ a = -a - c + (2a+c)$) 
and
\begin{equation}\label{k12}
\frac{3a+c}{k_1} +  \frac{3a+2c}{k_2} = 2a +c.
\end{equation}
Equation (\ref{k12}) gives
$$\Big(\frac{3}{k_1} + \frac{3}{k_2} - 2\Big) a + \Big(\frac{1}{k_1}+\frac{2}{k_2} -1\Big)c = 0.$$
We can check that the integers $k_1\geq 2$ and $k_2\geq 2$ which make this to be true are possibly as follows:
\begin{itemize}
\item    $k_1 > 6$ and $k_2 = 2$.
\item   $k_1 = k_2 = 3$.
\item    $k_1 = 2$ and $k_2 = 5$, in which case $a=c$.
\end{itemize}
By the assumption, symmetry and Proposition~\ref{wb1}, we can write the sets of weights at $P_1$ and $P_3$:
$$P_1\colon \Big\{-a, \frac{c}{k}, 2a+c, \frac{2a+2c}{l_1}, \frac{3a+2c}{k_2}\Big\},$$
$$P_3\colon \Big\{-\frac{3a+c}{k_1}, -2a-c, -\frac{2a}{l_2}, \frac{c}{k}, a+c \Big\},$$
where $2\leq l_1\in\N$, $1\leq l_2\in\N$, and $1\leq k\in\N$. 
By Lemma~\ref{sub} for $P_1$ and $P_3$, (\ref{k12}) and (\ref{eqc1}), we get
\begin{equation}\label{l12}
\frac{2a+2c}{l_1} + \frac{2a}{l_2} = 2a + c.
\end{equation}
Equation (\ref{l12}) gives
$$\Big(\frac{2}{l_1}+\frac{2}{l_2} -2\Big) a + \Big(\frac{2}{l_1}-1\Big) c = 0.$$
We can check that the integers $l_1\geq 2$ and $l_2\geq 1$ which make this to be true are possibly
as follows:
\begin{itemize}
\item  $l_1 = l_2 = 2$.
\item  $l_1 > 2$ and $l_2 = 1$.
\end{itemize}
By symmetry, the set of weights at $P_4$ is the negative of the set of weights at $P_1$, and the set of weights at $P_2$ is the negative of the set of weights at $P_3$.
By Lemma~\ref{ring},
$$\alpha_5 = \frac{1}{2k_1k_2}[\omega]^5,$$
$$\alpha_1 = [\omega], \,\, \alpha_4 = \frac{1}{kl_1k_2}[\omega]^4,$$
$$\alpha_2= \frac{1}{k}[\omega]^2, \,\,\,\mbox{and}\,\,\, \alpha_3 = \frac{1}{l_2k_1}[\omega]^3.$$
By Poincar\'e duality, $\alpha_1\alpha_4 = \alpha_5$ and $\alpha_2\alpha_3=\alpha_5$, from which   we get
$$kk_2l_1=2k_1k_2 \,\,\,\mbox{and}\,\,\, kk_1l_2 = 2 k_1k_2.$$ 
So
$$\frac{l_1}{l_2} = \frac{k_1}{k_2}.$$
By the above possibilities  for $k_1$, $k_2$, $l_1$ and $l_2$, we have either
\begin{itemize}
\item $k_1=k_2 = 3$ and $l_1=l_2 = 2$, or
\item $k_2 = 2$, $k_1 > 6$, $l_2 = 1$, $l_1 > 2$ and $k_1 = 2l_1.$
\end{itemize}
For the second case, (\ref{k12}) and (\ref{l12}) yield that $a=c$, $k_1=8$ and $l_1=4$; and the set of weights at $P_0$ is $\Big\{a, 2a, \frac{a}{2}, \frac{5a}{2}, 3a\Big\}$. By effectiveness of the action, we get $a=2$,  then the smallest weight $1$ on $M$ is from $P_0$ to $P_3$, contradicting to Lemma~\ref{alcx}. Hence we only  have the case that 
$$k_1=k_2 = 3 \,\,\,\mbox{and}\,\,\, l_1=l_2 = 2.$$ 
Then we get 
$$k=3.$$
Then we get the sets of weights at all the fixed points as claimed.
\end{proof}

Propositions~\ref{wb1} and \ref{wb2} yield Theorem~\ref{thm4} (3) in the Introduction.

\subsection{Determining  the ring $H^*(M; \Z)$ --- proof of Theorems 3 (1) and 4 (1)}

\
\medskip

In this part,  we determine the ring $H^*(M; \Z)$, proving Theorems 3 (1) and 4 (1).

Note that  each of the generators  $\alpha_i$'s of the ring $H^*(M; \Z)$ in Lemma~\ref{ring} is  determined by the product $\Lambda_i^-$ of the negative weights at $P_i$ and its relation with $\prod_{j=0}^{i-1}\big(\phi(P_j)-\phi(P_i)\big)$, we do not need to know the individual weights at the  fixed point $P_i$. 
Moreover, we only need to know $\alpha_1$, $\alpha_2$ and $\alpha_5$ or $\alpha_1$, $\alpha_3$ and $\alpha_5$.

\begin{theorem 4(1')}\label{thm4(1)}
Let  $(M, \omega)$ be a compact $10$-dimensional effective Hamiltonian $S^1$-manifold with moment map $\phi\colon M\to\R$. 
Assume $M^{S^1} = \{P_0,  P_1, \cdots, P_5\}$ and $[\omega]$ is a primitive integral class. If 
$$\Lambda_2^- = \frac{1}{3}\prod_{j=0}^{1}\big(\phi(P_j)-\phi(P_2)\big) \,\,\, \Big(\mbox{or}\,\,\, \Lambda_3^- = \frac{1}{6}\prod_{j=0}^{2}\big(\phi(P_j)-\phi(P_3)\big)\Big), \,\,\,\mbox{and}$$ 
$$\Lambda_5^- = \frac{1}{18}\prod_{j=0}^{4}\big(\phi(P_j)-\phi(P_5)\big),$$
then the ring $H^*(M; \Z)$ is generated by the following elements:
$$1, \,\,\, [\omega], \,\,\,\frac{1}{3}[\omega]^2, \,\,\, \frac{1}{6}[\omega]^3,\,\,\, \frac{1}{18}[\omega]^4,\,\,\, \frac{1}{18}[\omega]^5.$$
\end{theorem 4(1')}

\begin{proof}
By Lemma~\ref{10}, $\Lambda_1^- = \phi(P_0)-\phi(P_1)$.
Then by the assumption and  Lemma~\ref{ring}, we get 3 generators $[\omega]\in H^2(M; \Z)$, 
$\frac{1}{3}[\omega]^2\in H^4(M; \Z)$, and
$\frac{1}{18}[\omega]^5\in H^{10}(M; \Z)$. By Lemma~\ref{ring} and Poincar\'e duality, 
$\frac{1}{6}[\omega]^3\in H^6(M; \Z)$ and $\frac{1}{18}[\omega]^4\in H^8(M; \Z)$ are generators.
So the claim follows by Lemma~\ref{ring}.
\end{proof}

Theorem 3(3) and Theorem 4(1') yield Theorem~\ref{thm3} (1) in the Introduction.
Theorem 4 (3) in the Introduction and Theorem 4(1') yield   Theorem~\ref{thm4} (1) in the Introduction. 

\subsection{Determining the total Chern class $c(M)$ --- proof of Theorems 3 (2) and 4 (2)}
\
\medskip

In this part,  we determine the total Chern class $c(M)$, proving Theorems 3 (2) and 4 (2).

\begin{lemma}\label{extension}
Let  $(M, \omega)$ be a compact $10$-dimensional Hamiltonian $S^1$-manifold with moment map $\phi\colon M\to\R$. Assume $M^{S^1}=\{P_0,  P_1, \cdots, P_5\}$ and $[\omega]$ is a primitive integral class. Assume the ring $H^*(M; \Z)$ is generated by the elements:
$$1, \,\,\, [\omega], \,\,\,\frac{1}{3}[\omega]^2, \,\,\,\frac{1}{6}[\omega]^3, \,\,\,\frac{1}{18}[\omega]^4\,\,\,\frac{1}{18}[\omega]^5.$$
Then  $H^*_{S^1}(M; \Z)$ has a basis $\big\{\Tilde\alpha_i\,|\, 0\leq i\leq 5\big\}$  as follows, where $\ut$ is as in Lemma~\ref{ut}.
$$1,  \,\,\,\Tilde\alpha_1=\ut+\phi(P_0)t,  \,\,\,  \Tilde\alpha_2=\frac{1}{3}\prod_{j=0}^{1}\big(\ut +\phi(P_j)t\big),$$
$$\Tilde\alpha_3=\frac{1}{6}\prod_{j=0}^{2}\big(\ut +\phi(P_j)t\big),  \,\,\,
 \Tilde\alpha_4=\frac{1}{18}\prod_{j=0}^{3}\big(\ut +\phi(P_j)t\big), \,\,\,
 \Tilde\alpha_5=\frac{1}{18}\prod_{j=0}^{4}\big(\ut +\phi(P_j)t\big).$$
\end{lemma}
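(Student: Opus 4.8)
The plan is to start from the abstract basis $\{\Tilde\alpha_i\}$ of $H^*_{S^1}(M;\Z)$ provided by Proposition~\ref{equibase} and to recognize each $\Tilde\alpha_i$ as the explicit product displayed in the statement, with Corollary~\ref{cor} serving as the bridge. Recall that Proposition~\ref{equibase} gives $\Tilde\alpha_i\in H^{2i}_{S^1}(M;\Z)$ with $\Tilde\alpha_i|_{P_j}=0$ for all $j<i$ and $\Tilde\alpha_i|_{P_i}=\Lambda_i^- t^i$, that $r(\Tilde\alpha_i)=\alpha_i$, and that by Lemma~\ref{ring} each $\alpha_i$ is a rational multiple of $[\omega]^i$ given by formula~(\ref{ai}).

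\textbf{Steps.} First I would set $\Tilde\beta_i:=\prod_{j=0}^{i-1}\big(\ut+\phi(P_j)t\big)\in H^{2i}_{S^1}(M;\Z)$, which is integral since $\ut$ is integral by Lemma~\ref{ut} and $[\omega]$ is (primitive) integral. Because $P_j$ is a point, Lemma~\ref{ut} gives $\ut|_{P_j}=-\phi(P_j)t$, so the $j$-th factor of $\Tilde\beta_i$ restricts to $0$ at $P_j$; hence $\Tilde\beta_i|_{P_j}=0$ for every $j<i$, and Corollary~\ref{cor} yields an integer $a_i$ with $\Tilde\beta_i=a_i\Tilde\alpha_i$. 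Next I would compute $a_i$ in two compatible ways. Restricting $\Tilde\beta_i=a_i\Tilde\alpha_i$ to $P_i$ gives $\prod_{j=0}^{i-1}\big(\phi(P_j)-\phi(P_i)\big)\,t^i=a_i\Lambda_i^- t^i$; since each $\phi(P_j)-\phi(P_i)$ is negative by (\ref{eqorder}) and $P_i$ has exactly $i$ negative weights (Morse index $2i$), both products carry the sign $(-1)^i$, so $a_i=\prod_{j<i}\big(\phi(P_j)-\phi(P_i)\big)/\Lambda_i^->0$. Applying instead the restriction $r$ to ordinary cohomology gives $[\omega]^i=a_i\alpha_i$, i.e. $\alpha_i=\frac{1}{a_i}[\omega]^i$; by hypothesis the (positively normalized) generator of $H^{2i}(M;\Z)=\Z$ is $\frac{1}{c_i}[\omega]^i$ with $(c_0,\dots,c_5)=(1,1,3,6,18,18)$, so $a_i=c_i$. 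Therefore $\Tilde\alpha_i=\frac{1}{c_i}\Tilde\beta_i=\frac{1}{c_i}\prod_{j=0}^{i-1}\big(\ut+\phi(P_j)t\big)$, which is precisely the list in the statement; that these six classes form a basis of $H^*_{S^1}(M;\Z)$ as an $H^*(\CP^\infty;\Z)$-module is already contained in Proposition~\ref{equibase}.

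\textbf{Main obstacle.} There is no genuine difficulty beyond bookkeeping; the one point that needs care is the normalization, namely confirming that $\alpha_i$ is the displayed generator and not its negative. I would handle this by recording explicitly that $\operatorname{sign}(\Lambda_i^-)=\operatorname{sign}\big(\prod_{j<i}(\phi(P_j)-\phi(P_i))\big)=(-1)^i$, which forces $a_i>0$ and hence $a_i=c_i$. With that in hand the lemma is a direct assembly of Proposition~\ref{equibase}, Corollary~\ref{cor}, Lemma~\ref{ut}, and Lemma~\ref{ring}.
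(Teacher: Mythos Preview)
Your proposal is correct and follows essentially the same route as the paper's proof: use Proposition~\ref{equibase} for the abstract basis, apply Corollary~\ref{cor} to write $\prod_{j<i}(\ut+\phi(P_j)t)=a_i\Tilde\alpha_i$, and then read off $a_i$ by restricting to ordinary cohomology and invoking the hypothesis on the generators. Your treatment is in fact a bit more careful than the paper's, which does not explicitly record the sign check ensuring $a_i>0$.
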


\begin{proof}
By Proposition~\ref{equibase}, $H^*_{S^1}(M; \Z)$ has a basis $\big\{\Tilde\alpha_i\,|\, 0\leq i\leq 5\big\}$ satisfying the stated properties there. By Corollary~\ref{cor}, there exists $a_i\in\Z$, such that
$$\prod_{j=0}^{i-1}\big(\ut +\phi(P_j)t\big) = a_i \Tilde\alpha_i.$$
Restricting it to ordinary cohomology, we get the basis $\alpha_i$'s in ordinary cohomology:
$$[\omega]^i = a_i\alpha_i.$$
By the assumption on the basis $\big\{\alpha_i\,|\, 0\leq i\leq 5\big\}$ in ordinary cohomology, we get the integers $a_i$'s, then we get the $\Tilde\alpha_i$'s.
\end{proof}

\begin{theorem 4(2)}\label{totalchern}
Let  $(M, \omega)$ be a compact $10$-dimensional  Hamiltonian $S^1$-manifold with moment map $\phi\colon M\to\R$. Assume $M^{S^1}=\{P_0,  P_1, \cdots, P_5\}$ and $[\omega]$ is a primitive integral class.  Assume the largest weight  is from $P_0$ to $P_5$, from $P_1$ to $P_3$, and from $P_2$ to $P_4$, and is equal to $\frac{1}{2}\big(\phi(P_5)-\phi(P_0)\big) = \phi(P_3)-\phi(P_1)=\phi(P_4)-\phi(P_2)$. Assume $\phi(P_1)-\phi(P_0)=\phi(P_5)-\phi(P_4)$.
Then the total Chern class 
$$c(M) = 1 + 3\alpha_1 +13 \alpha_2 +22\alpha_3 +30 \alpha_4 +6 \alpha_5,$$
$$=1+3[\omega]+\frac{1}{3}(13[\omega]^2 + 11[\omega]^3 + 5[\omega]^4 + [\omega]^5).$$
where $\alpha_i\in H^{2i}(M; \Z)$ is a generator for each $1\leq i\leq 5$.
\end{theorem 4(2)}

\begin{proof}
By the assumption, we may let
$$\phi(P_1)-\phi(P_0)=\phi(P_5)-\phi(P_4)= a, \,\,\,\mbox{and}\,\,\, \phi(P_2)-\phi(P_1)=\phi(P_4)-\phi(P_3)= c.$$
Then 
$$\phi(P_3)-\phi(P_2)=2a.$$
By Theorem~\ref{thm4} (3), we have the sets of weights at all the fixed points in terms of $a$ and 
$c$. Since $c$ should be divisible by $3$, we let 
$$c=3b.$$ 
Then
we can write the sets of weights at all the fixed points as follows:
$$P_0\colon \{a, a+3b, a+b, a+2b, 2a+3b\},\,\,\,  P_5\colon \{-2a-3b, -a-2b, -a-b, -a-3b, -a\},$$
$$P_1\colon \{-a, b,  2a+3b, a+3b, a+2b\}, \,\,\,  P_4\colon \{-a-2b, -a-3b, -2a-3b, -b, a\},$$
$$P_2\colon \{-a-3b, -b, a, 2a+3b, a+b\}, \,\,\,  P_3 \colon \{-a-b, -2a-3b, -a, b, a+3b\}.$$
Then we obtain the following data.
$$c_1^{S^1}(M)|_{P_0}=(6a+9b) t, \,\,\,c_1^{S^1}(M)|_{P_1}= (3a+9b) t.$$
$$c_2^{S^1}(M)|_{P_0}=(14a^2+42ab+29b^2) t^2, \,\,\,c_2^{S^1}(M)|_{P_1}=(a^2+16ab+29b^2) t^2,$$
$$c_2^{S^1}(M)|_{P_2}= (a^2 -10ab -10b^2) t^2.$$
$$c_3^{S^1}(M)|_{P_0}= (16a^3 + 72a^2b + 98ab^2 + 39 b^3) t^3, \,\,\,c_3^{S^1}(M)|_{P_1}= (-3a^3-7a^2b + 19ab^2 + 39b^3) t^3,$$
$$c_3^{S^1}(M)|_{P_2}=  (-3a^3 -20a^2b -20ab^2) t^3,\,\,\, c_3^{S^1}(M)|_{P_3}=(3a^3 +20a^2b +20ab^2)  t^3.$$
$$c_4^{S^1}(M)|_{P_0}= (9a^4+54a^3b+ 109a^2b^2 +84ab^3 + 18b^4) t^4, $$
$$c_4^{S^1}(M)|_{P_1}= (-2a^4 -16a^3b -35a^2b^2 -12ab^3 +18 b^4) t^4,$$
$$c_4^{S^1}(M)|_{P_2}= (-2a^4 -8a^3b + a^2b^2 + 18ab^3 + 9b^4) t^4,$$
$$c_4^{S^1}(M)|_{P_3}= (-2a^4 -8a^3b +a^2b^2 +18 ab^3 +9b^4) t^4, $$
$$ c_4^{S^1}(M)|_{P_4}= (-2a^4 -16a^3b -35a^2b^2 -12ab^3 +18 b^4) t^4.$$

By Theorem 4 (1), we have the basis $\{\Tilde\alpha_i\,|\, 0\leq i\leq 5\}$   of $H^*_{S^1}(M; \Z)$ as in Lemma~\ref{extension}. Using this basis and the degree of $c_1^{S^1}(M)$, 
we can write $c_1^{S^1}(M) = a_0t + a_1\Tilde\alpha_1$, where 
$a_0, a_1\in\Z$. Restricting it to $P_0$ and $P_1$ respectively, we get 
$$c_1^{S^1}(M) = (6a+9b) t+3\Tilde\alpha_1.$$ 
Restricting it to ordinary cohomology, we get
$$c_1(M) = 3\alpha_1.$$
Similarly, we can write $c_2^{S^1}(M) = b_0t^2 + b_1t\Tilde\alpha_1 + b_2\Tilde\alpha_2$. Respectively restricting it to $P_0$, $P_1$ and $P_2$, we get 
$$c_2^{S^1}(M) = (14a^2 + 42ab + 29 b^2) t^2 + (13a + 26b) t\Tilde\alpha_1+13\Tilde\alpha_2.$$ 
Restricting it to ordinary cohomology, we get
$$c_2(M) = 13\alpha_2.$$
Similarly, we have $c_3^{S^1}(M) = c_0t^3 + c_1t^2\Tilde\alpha_1 + c_2t\Tilde\alpha_2+c_3\Tilde\alpha_3$. Respectively restricting it to $P_0$, $P_1$, $P_2$ and $P_3$, we get
$$c_3^{S^1}(M) = (16a^3 +72a^2b + 98ab^2 +39b^3)t^3 + (19a^2 + 79ab + 79 b^2)t^2\Tilde\alpha_1 + (44a+66b) t\Tilde\alpha_2+22\Tilde\alpha_3.$$ 
Restricting it to ordinary cohomology, we get
$$c_3(M) = 22\alpha_3.$$
We similarly write $c_4^{S^1}(M) = d_0t^4 + d_1t^3\Tilde\alpha_1 + d_2t^2\Tilde\alpha_2+d_3t\Tilde\alpha_3 + d_4\Tilde\alpha_4$. Respectively restricting it to $P_0$, $P_1$, $P_2$, $P_3$ and $P_4$, we get  
$$c_4^{S^1}(M) =(9a^4 +54a^3b +109a^2b^2 +84 ab^3 + 18b^4) t^4+(11a^3 +70a^2b + 144ab^2 +96b^3) t^3\Tilde\alpha_1$$
$$+(41a^2 +123ab + 93 b^2)t^2\Tilde\alpha_2+30(a+2b) t\Tilde\alpha_3+30\Tilde\alpha_4.$$ 
Restricting it to ordinary cohomology, we get
$$c_4(M) = 30 \alpha_4.$$
Since the top Chern number is equal to the number of fixed points,  we have
$$c_5(M) = 6\alpha_5.$$
The last equality in the claim is by Theorem 4 (1).
\end{proof}

Theorem~\ref{thm4} (2) in the Introduction follows from the theorem above.
By Theorem 3(3),  Theorem~\ref{thm3} (2) in the Introduction is a special case of Theorem 4(2) above.

\end{document}